%
\documentclass[a4paper,12pt]{amsart}
\usepackage{fullpage, verbatim}
\usepackage{enumerate, amsfonts, amssymb, amsthm, mathrsfs}
\usepackage{xcolor}

\usepackage[all]{xy}
\usepackage[T1]{fontenc}

\usepackage[colorlinks=true]{hyperref}
\numberwithin{equation}{section}
\usepackage{cleveref}


\newcommand{\loccit}{{\it{loc.~cit.}}}

\newcommand\CA{{\mathscr A}}

\newcommand\FD{\mathfrak D}

\newcommand\BBC{{\mathbb C}}

\newcommand\BBN{{\mathbb N}}
\newcommand\BBQ{{\mathbb Q}}

\newcommand\BBZ{{\mathbb Z}}

\newcommand\Yprime{Y}

\DeclareMathOperator{\spec}{spec}

\DeclareMathOperator{\diag}{diag}

\newcommand\ord{{\operatorname{ord}}}

\newcommand\Hodgefil{\mathcal{H}}
\newcommand\Hodgedec{\mathcal{G}}

\newcommand\Binf{B_\infty}

\newcommand\weight{\mathrm w}

\newcommand{\Mprime}{M^{(0)}}


\newcommand\coexp{\operatorname{coexp}}

\newcommand\Coinv{\operatorname{Coinv}}

\newcommand\Der{{\operatorname{Der}}}
\newcommand\DerS{{\Der_S}}
\newcommand\DerSW{{\Der_S^W}}

\newcommand\DerR{{\Der_R}}

\newcommand\InvSpecial{F^{\mathrm{fl}}}
\newcommand\hh{\Delta}
\newcommand\VanLoc{\mathcal{H}}

\newcommand\Fix{{\operatorname{Fix}}}

\newcommand\GL{\operatorname{GL}}

\newcommand\Irr{{\operatorname{Irr}}}

\newcommand\pdeg{\operatorname{pdeg}}

\newcommand\refl{\mathcal{R}}

\newcommand{\prim}{\nabla_{\! D}}
\newcommand{\priminv}{\prim^{-1}}
\newcommand{\priminvm}{\prim^{-m}}

\newcommand{\eqdef}{:=}

\newcommand{\pmat}[1]{\begin{pmatrix} #1 \end{pmatrix}}

\newcommand{\colvect}{\pmat{\partial_{t_1} \\ \vdots \\ \partial_{t_\ell}}}
\newcommand{\colvecx}{\pmat{\partial_{x_1} \\ \vdots \\ \partial_{x_\ell}}}
\newcommand{\colvecXi}{\pmat{\xi_\ell \\ \vdots \\ \xi_1}}

\newcommand{\rowvect}{(\partial_{t_1}, \ldots, \partial_{t_\ell})}
\newcommand{\rowvecx}{(\partial_{x_1}, \ldots, \partial_{x_\ell})}
\newcommand{\rowvecXi}{(\xi_\ell, \ldots, \xi_1)}

\newcommand{\Jtx}{J_{\partial\bbt/\partial\bbx}}
\newcommand{\Jxt}{J_{\partial\bbx/\partial\bbt}}

\newcommand\inverse{^{-1}}

\newcommand{\bbt}{{\textbf{t}}}

\newcommand{\bbx}{{\textbf{x}}}

\newcommand{\tr}{\mathrm{tr}}


\newcommand\id{{id}}

\definecolor{darkblue}{rgb}{0,0,0.7} 
\newcommand{\darkblue}{\color{darkblue}} 
\newcommand{\defn}[1]{\emph{\darkblue #1}} 

\newcommand{\ie}{{i.e.}} 
\newcommand{\eg}{{e.g.}} 
\newcommand{\cf}{\text{cf.}} 

\newcommand{\one}{{1\!\!1}} 

\usepackage[colorinlistoftodos]{todonotes}


\theoremstyle{plain}
\newtheorem{lemma}[equation]{Lemma}
\newtheorem{theorem}[equation]{Theorem}

\newtheorem{corollary}[equation]{Corollary}
\newtheorem{proposition}[equation]{Proposition}
\theoremstyle{definition}
\newtheorem{definition}[equation]{Definition}
\newtheorem{remark}[equation]{Remark}
\newtheorem{remarks}[equation]{Remarks}


\subjclass[2010]{20F55, 52C35, 14N20, 32S25}

\begin{document}

\title[Freeness of multi-reflection arrangements]
{Freeness of multi-reflection arrangements\\ via primitive vector fields}

\author[T.~Hoge]{Torsten Hoge}
\address
{Fakult\"at f\"ur Mathematik und Physik,
Leibniz Universit\"at Hannover,
Germany}
\email{hoge@math.uni-hannover.de}

\author[T.~Mano]{Toshiyuki Mano}
\address
{Department of Mathematical Sciences,
University of the Ryukyus,
Okinawa, Japan}
\email{tmano@math.u-ryukyu.ac.jp}

\author[G.~R\"ohrle]{Gerhard R\"ohrle}
\address
{Fakult\"at f\"ur Mathematik,
Ruhr-Universit\"at Bochum, Germany
}
\email{gerhard.roehrle@rub.de}

\author[C.~Stump]{Christian Stump}
\address
{Fakult\"at f\"ur Mathematik,
Ruhr-Universit\"at Bochum, Germany}
\email{christian.stump@rub.de}

\keywords{
Multi-arrangement, reflection arrangement, 
free arrangement, unitary reflection group,
systems of flat invariants and derivations.
}

\allowdisplaybreaks

\begin{abstract}
  In 2002, Terao showed that every reflection multi-arrangement of a real reflection group with constant multiplicity is free by providing a basis of the module of derivations.
  We first generalize Terao's result to 
  multi-arrangements stemming from well-generated unitary reflection groups, where the multiplicity of a hyperplane depends on the order of its stabilizer.
  Here the exponents depend on the exponents of the dual reflection representation.
  We then extend our results further to all imprimitive irreducible unitary reflection groups. In this case the exponents turn out to depend on the exponents of a certain Galois twist of the dual reflection representation that comes from a Beynon-Lusztig type semi-palindromicity of the fake degrees.
\end{abstract}

\maketitle

\setcounter{tocdepth}{1}
\tableofcontents


\section{Introduction}

In his seminal work~\cite{ziegler:multiarrangements}, Ziegler
introduced the concept of multi-arrangements
generalizing the notion of hyperplane arrangements.
In~\cite{terao:multi}, Terao showed that every reflection multi-arrangement of a real reflection group with constant multiplicities is free, see also the approach by Yoshinaga~\cite{yoshinaga:multicoxeter}.
Our aim is to generalize this result from real reflection groups to unitary reflection groups, see \Cref{thm:main1,,thm:main2}.
It turns out that the constant multiplicity in the real case stems from the fact that real reflections have order two. In the general case this constant multiplicity is replaced by the \emph{order multiplicity} given by the orders of the unitary reflections involved.

\medskip

We first extend Yoshinaga's construction of a basis of the module of derivations and of Saito's Hodge filtration to well-generated unitary reflection groups by constructing and then using a flat connection based on recent developments of flat systems of invariants in the context of isomonodromic deformations and differential equations of Okubo type due to Kato, Mano and Sekiguchi~\cite{KMS2015}, see also the recent work of Konishi, Minabe and Shiraishi~\cite{KMS2018b}.
In the case of well-generated unitary reflection groups this flat connection replaces the role of Saito's flat connection in the case of real reflection groups~\cite{Sai1993}, and naturally explains the occurrence of the order multiplicity in the multiplicity function.
The freeness in this case is thus the algebraic incarnation of the geometry of the logarithmic vector fields along the discriminant hypersurface.

\medskip

We then further extend the results to the imprimitive reflection groups by use of a permutation of the irreducible complex representations that is studied in the context of the representation theory of the Hecke algebra and which induces a semi-palindromic property on the fake degree polynomial~\cite{malle:fake,opdam:dunkoperators,gordongriffeth}.

\bigskip

Suppose that~$W$ is an irreducible unitary reflection group with
reflection representation~$V \cong \BBC^\ell$.
Denote the set of reflections by $\refl = \refl(W)$, and the associated reflection arrangement in~$V$ by $\CA = \CA(W)$.
For $H \in \CA$, let $e_H \in \BBN \eqdef \{ 0,1,2,\ldots\}$ denote the order of the pointwise
stabilizer of~$H$ in~$W$, and consider the \defn{order multiplicity} given by
\[
\omega : \CA \to\BBN, \quad \omega(H) = e_H 
\]
for $H \in \CA$.
For $m \in \BBN$ let  $m \omega$ and $m \omega + 1$ denote the multiplicities $m \omega(H) = m e_H$ and $m \omega(H) + 1 = m e_H + 1$ for $H \in \CA$, respectively.
Observe that in the case that~$W$ is real, we have $e_H = 2$ for all $H \in \CA$.
In this case, $m\omega$ thus corresponds to the constant even multiplicity $2m$ and $m\omega+1$ to the constant odd multiplicity $2m+1$.

\medskip

Following~\cite{gordongriffeth}, the \defn{Coxeter number} of~$W$ is given by
\[
h = h_W \eqdef \frac{1}{\ell} \sum_{H \in \CA} e_H = \frac{1}{\ell}\Big(|\refl| + |\CA|\Big),
\]
generalizing the usual Coxeter number of a real reflection group to irreducible unitary reflection groups.
Let $\Irr(W)$ denote the irreducible complex representations of~$W$ up to isomorphism.
For~$U$ in $\Irr(W)$ of dimension~$d$,
denote by
\[
  \exp_U(W) \eqdef \big\{n_1(U) \leq \ldots \leq n_{d}(U) \big\}
\]
the \defn{$U$-exponents} of~$W$ given by the~$d$ homogeneous degrees in the coinvariant algebra of~$W$ in which~$U$ appears.
In particular, the \defn{exponents} of~$W$ are
\[
  \exp(W) \eqdef  \exp_V(W) = \big\{n_1(V) \leq \ldots \leq n_{\ell}(V)\big\}
\]
and the \defn{coexponents} of~$W$ are 
\[
  \coexp(W) \eqdef  \exp_{V^*}(W) = \big\{n_1(V^*) \leq \ldots \leq n_{\ell}(V^*)\big\}.
\]
The group~$W$ is \defn{well-generated} if $n_i(V) + n_{\ell+1-i}(V^*) = h$, \eg, see~\cite{OS1980,malle:fake,Bes2015}.

\medskip

Our first main result generalizes Terao's theorem~\cite{terao:multi} to the well-generated case.
 
\begin{theorem}
\label{thm:main1}
  Let~$W$ be an irreducible, well-generated unitary reflection group with reflection arrangement $\CA(W)$.
  Let $\omega : \CA(W) \to \BBN$ given by $\omega(H) = e_H$, and let $m \in \BBN$.
  Then
  \begin{enumerate}[(i)]
    \item\label{eq:main11}
      the reflection multi-arrangement $(\CA(W), m \omega)$ is free 
      with exponents
      \[
        \exp (\CA(W), m \omega) = \big\{m h, \ldots, m h \big\},
      \]
    \item\label{eq:main12}
      the reflection multi-arrangement $(\CA(W), m \omega + 1)$ is free 
      with exponents
      \begin{equation*}
        \exp (\CA(W), m\omega+1) = \big\{m h + n_1(V^*), \ldots , m h + n_\ell(V^*) \big\}.
      \end{equation*}
  \end{enumerate}
\end{theorem}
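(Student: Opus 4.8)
The plan is to follow Yoshinaga's strategy for real reflection groups, with Saito's flat connection replaced by the flat connection $\nabla$ on the orbit space $V/W$ constructed from the flat structure of Kato--Mano--Sekiguchi~\cite{KMS2015} (available precisely because $W$ is well generated), together with its \emph{primitive vector field} $D$, the flat field dual to the top-degree basic invariant $t_\ell$. Write $\pi \colon V \to V/W$ for the quotient, $S = \BBC[V]$ for the coordinate ring, $\alpha_H \in S$ for a linear form defining $H \in \CA = \CA(W)$, and $D(\CA, \mu)$ for the module of $\mu$-logarithmic derivations; we regard $\prim = \nabla_{\!D}$ as an operator on rational vector fields on $V$ via pullback along $\pi$. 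Because $W$ is well generated its largest degree equals the Coxeter number $h$ (equivalently $n_1(V^*) = 1$), and consequently $\prim$ is homogeneous of degree $-h$, so its partial inverse $\priminv$ raises polynomial degree by $h$. Recall also $\sum_H e_H = |\refl| + |\CA| = \ell h$.

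We prove (i) and (ii) together by induction on $m$, treating the multiplicities $m\omega$ and $m\omega + 1$ as the cases $\epsilon \in \{0,1\}$ of $m\omega + \epsilon$. For $m = 0$: the case $\epsilon = 0$ is trivial since $D(\CA, 0) = \DerS$ is free with exponents $\{0, \dots, 0\}$, and the case $\epsilon = 1$ is the reflection arrangement $\CA(W)$ itself, free with exponents its coexponents $\{n_1(V^*), \dots, n_\ell(V^*)\}$ by the Orlik--Solomon theorem. For the inductive step the crux is the assertion that $\priminv$ restricts to an injective map $D(\CA, m\omega + \epsilon) \to D(\CA, (m+1)\omega + \epsilon)$. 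Granting it, take a homogeneous basis $\theta_1, \dots, \theta_\ell$ of $D(\CA, m\omega + \epsilon)$, which exists by induction; then $\priminv\theta_1, \dots, \priminv\theta_\ell$ are homogeneous elements of $D(\CA, (m+1)\omega + \epsilon)$ whose degrees sum to $\sum_i (\deg\theta_i + h) = (m\ell h + \epsilon|\CA|) + \ell h = \sum_H \big((m+1)e_H + \epsilon\big)$, the degree of the defining polynomial $Q = \prod_H \alpha_H^{(m+1)e_H + \epsilon}$ of $(\CA, (m+1)\omega + \epsilon)$. Since $(\priminv\theta_i)(\alpha_H) \in (\alpha_H^{(m+1)e_H + \epsilon})$ for each $H$, expanding the coefficient determinant of $\priminv\theta_1, \dots, \priminv\theta_\ell$ along the $\alpha_H$-coordinate at a generic point of $H$ shows that $Q$ divides it; with the degrees matching, the determinant is a scalar multiple of $Q$, and once this scalar is seen to be nonzero the Saito criterion for multi-arrangements gives that $\priminv\theta_1, \dots, \priminv\theta_\ell$ is a basis. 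Hence $(\CA, (m+1)\omega + \epsilon)$ is free with the level-$m$ exponents shifted up by $h$ -- exactly the assertion at level $m+1$ -- and feeding in the two base cases yields (i) and (ii). (One may equally organise the iteration through the associated Hodge--Saito filtration $\Hodgefil$, as in Yoshinaga's treatment.)

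It remains to prove the crux together with the non-vanishing of that determinant, and this is the main obstacle. Both come from a local analysis of $\prim$ near a generic point $p$ of a hyperplane $H$: there one may choose coordinates in which $H = \{\alpha_H = 0\}$ and the pointwise stabiliser of $H$ -- cyclic of order $e_H$ -- acts on $\alpha_H$ through a primitive $e_H$-th root of unity and fixes the other coordinates, so that $V/W$ is locally parametrised by $\alpha_H^{e_H}$ together with those coordinates and the discriminant is $\{\alpha_H^{e_H} = 0\}$. The content of the flat-structure construction is that in these coordinates $\prim$ has a logarithmic pole along $H$ whose residue is governed by $e_H$, reducing in the transverse direction to the rank-one model ($W$ cyclic of order $e = e_H$, invariant $t = \alpha_H^{e}$, $D = \partial/\partial t$), where a direct computation gives $\prim\big(\alpha_H^{k}\,\partial/\partial\alpha_H\big) = \tfrac{k + e - 1}{e}\,\alpha_H^{k - e}\,\partial/\partial\alpha_H$. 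Thus $\priminv$ raises the order of vanishing of the $\partial/\partial\alpha_H$-component of a derivation along $H$ by exactly $e_H$; the scalars occurring never vanish (a positivity estimate on the relevant degrees), so $\priminv$ introduces no poles and is injective on polynomial derivations. Consequently $\theta(\alpha_H) \in (\alpha_H^{m e_H + \epsilon})$ forces $(\priminv\theta)(\alpha_H) \in (\alpha_H^{(m+1)e_H + \epsilon})$, and $\priminv$ multiplies the $\alpha_H$-order of the coefficient determinant by a unit times $\alpha_H^{e_H}$, so the determinant of $\priminv\theta_1, \dots, \priminv\theta_\ell$ is nonzero. This is precisely where the \emph{order} multiplicity $\omega(H) = e_H$ is forced on us, as the algebraic shadow of the residue of $\nabla$ along the discriminant hypersurface. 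The genuinely technical input -- established in the earlier sections via~\cite{KMS2015} -- is to pin down this local normal form of $\nabla$ near the discriminant for all well-generated $W$; granted that, the rest is the bookkeeping around the Saito criterion described above.
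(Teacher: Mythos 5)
Your overall philosophy is the right one (KMS flat structure, primitive vector field, $e_H$ as the local residue datum along $H$), but the architecture of your induction has a genuine gap that the paper's proof is specifically designed to avoid. Your ``crux'' is that $\priminv$ restricts to an injective, basis-to-basis map $\FD(\CA,m\omega+\epsilon)\to\FD(\CA,(m+1)\omega+\epsilon)$. However, $\priminv$ is only constructed as the inverse of $\prim\colon \DerR(-\log\hh)\to\DerR$, i.e.\ on \emph{$W$-invariant} derivations; a basis of $\FD(\CA,m\omega+\epsilon)$ consists of non-invariant derivations, on which $\priminv$ is not defined. Nor can you extend it naively: by flatness $\nabla_\delta(\partial_{x_i})=0$, so $\prim$ has a large kernel on $\DerS$ and is not injective there. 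Indeed your local model is miscomputed: with $t=\alpha^e$ and $D=\tfrac{1}{e\alpha^{e-1}}\partial_\alpha$ one gets $\prim(\alpha^k\partial_\alpha)=\tfrac{k}{e}\,\alpha^{k-e}\partial_\alpha$, not $\tfrac{k+e-1}{e}\alpha^{k-e}\partial_\alpha$; the coefficient vanishes at $k=0$, which is exactly the non-injectivity your argument needs to rule out. Finally, $\prim$ is only $\BBC[\bbt']$-linear, not $S$-linear, so even a well-defined $\priminv$ would not carry an $S$-basis to an $S$-basis, and your ``once this scalar is seen to be nonzero'' step is unsupported: the divisibility of the Saito determinant by $Q$ says nothing about its non-vanishing, which is the actual content to be proved.

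The paper circumvents all of this by never iterating $\priminv$ on non-invariant data. It applies $\priminv$ $m$ times to the single invariant Euler derivation $E$, staying inside $\DerR(-\log\hh)\cong\DerSW$ throughout (Proposition~\ref{prop:mainingredient}), obtaining a universal element with $\priminvm(E)\alpha_H\in\alpha_H^{me_H+1}S$ via the equivalence~\eqref{eq:lifting}. The basis of $\FD(\CA,m\omega+\mu)$ is then $\nabla_{\theta_i}\priminvm(E)$ for $\theta_1,\dots,\theta_\ell$ a basis of $\FD(\CA,\mu)$: here the connection is used in its \emph{$S$-linear} slot, and the linear independence over $S$ (the nonvanishing of the determinant you need) is the content of the universality statement, Proposition~\ref{prop:univerality}, proved via the Hodge filtration $\Hodgefil_k=\bigoplus t_\ell^i\Hodgedec_0$. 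If you want to salvage your outline, you should replace the iterated-$\priminv$ induction by this ``differentiate a universal invariant element'' mechanism, or else supply a genuinely new argument that some extension of $\priminv$ to $\FD(\CA,m\omega+\epsilon)$ exists, preserves polynomiality, and respects the $S$-module structure --- none of which is currently justified.
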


Note from above that $\coexp(W) = \exp_{V^*}(W) = \big\{n_1(V^*), \ldots, n_{\ell}(V^*)\big\}$.

\medskip

In the special case when $W$ is a Coxeter group, \Cref{thm:main1} recovers Terao's theorem ~\cite{terao:multi},
as then $\omega \equiv 2$ and $\coexp(W) = \exp(W)$.

We prove this theorem in \Cref{sec:proofmain1}.
Indeed, we extend Yoshinaga's construction~\cite[Thm.~1]{yoshinaga:multicoxeter} of a basis of the module of derivations and Saito's Hodge filtration to well-gen\-erated groups by using a recent construction due to Kato, Mano and Sekiguchi~\cite{KMS2015}.
See \Cref{thm:main1strong} for the precise formulation, which is our generalization of~\cite[Thm.~7]{yoshinaga:multicoxeter} to the well-generated setting.

\medskip

In~\cite{KMS2015}, the authors construct flat systems of invariants of well-generated unitary reflection groups in the context of isomonodromic deformations and differential equations of Okubo type.
For real reflection groups, the notion of flat systems of invariants was introduced by Saito, Yano and Sekiguchi in~\cite{SYS1980}.
The existence of such flat systems was shown in \loccit\ in all real types except~$E_7$ and~$E_8$.
Saito then gave a uniform construction in all real types in~\cite{Sai1993}.

\bigskip

Our second main result extends \Cref{thm:main1} further to the infinite three-parameter family $W = G(r,p,\ell)$ of imprimitive reflection groups.
It turns out that the corresponding multi-arrangements are also free.
However, the description of the exponents is considerably more involved and depends on the representation theory of the Hecke algebra associated to the group~$W$.
To this end, let~$\Psi$ denote the permutation on $\Irr(W)$ introduced by Malle in~\cite[Sec.~6C]{malle:fake}, having the semi-palindromic property on the fake degrees of~$W$.
This is, for any~$U$ in $\Irr(W)$ of dimension~$d$, we have
\begin{equation*}
  n_i(U) + n_{d+1-i}(\Psi(U^*)) = h_U,
\end{equation*}
where $h_U = |\CA|- \sum_{r \in \refl}\chi(r)/\chi(1)$ with $\chi$ being the character of~$U$.
A direct calculation shows that $h_V = h_W$ is the Coxeter number of~$W$.
Moreover, $\Psi(V^*) = V^*$ if and only if~$W$ is well-generated~\cite[Cor.~4.9]{malle:fake}.

\medskip

\begin{theorem}
\label{thm:main2}
  Let~$W = G(r,p,\ell)$ with reflection arrangement $\CA(W)$.
  Let $\omega : \CA(W) \to \BBN$ given by $\omega(H) = e_H$, and let $m \in \BBN$.
  Then
  \begin{enumerate}[(i)]
  \item\label{eq:main21}
    the reflection multi-arrangement $(\CA(W), m \omega)$ is free 
    with exponents
    \[
      \exp (\CA(W), m \omega) = \big\{m h, \ldots, m h \big\},
    \]
  \item\label{eq:main22}
    the reflection multi-arrangement $(\CA(W), m \omega + 1)$ is free 
    with exponents
    \[
      \exp (\CA(W),  m \omega+1) = 
      \big\{m h +n_1(\Psi^{-m}(V^*)), \ldots , m h + n_\ell(\Psi^{-m}(V^*)) \big\}.
    \]
  \end{enumerate}
\end{theorem}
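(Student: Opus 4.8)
First I would dispose of the cases in which $W=G(r,p,\ell)$ is itself well-generated: for $\ell\ge 2$ this means $p\in\{1,r\}$, and for $\ell=1$ it is automatic. There $\Psi(V^*)=V^*$ by \cite[Cor.~4.9]{malle:fake}, so $\Psi^{-m}(V^*)=V^*$ for all $m$, $\{n_i(\Psi^{-m}(V^*))\}=\coexp(W)$, and \Cref{thm:main2} is literally \Cref{thm:main1}. The content lies in the range $1<p<r$, $\ell\ge 2$, where $h_W=r(\ell-1)+r/p$ strictly exceeds every degree of $W$; hence $W$ is not a duality group and the flat structure produced for \Cref{thm:main1} is not available on the orbit space $V/W$ itself. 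Here I would use two features of this range: as an \emph{arrangement} $\CA(W)$ coincides with $\CA(G(r,1,\ell))$, and equals $\CA(G(r,r,\ell))$ with the $\ell$ coordinate hyperplanes adjoined --- and both $G(r,1,\ell)$ and $G(r,r,\ell)$ are well-generated; moreover $S^W$ is the cyclic degree-$p$ extension of $S^{G(r,1,\ell)}$ obtained by adjoining a $p$-th root of the top basic invariant, while the order multiplicities $\omega$ of $W$ and of $G(r,1,\ell)$ agree away from the coordinate hyperplanes, where they are $r/p$ and $r$.

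On this basis, the plan is to construct on $V/W$ a ``primitive-integral'' operator $\priminv$, designed to raise polynomial degree by $h_W$, by transporting the primitive vector field and flat connection of \Cref{thm:main1strong} for $G(r,1,\ell)$ through the covering above and correcting for the discrepancy $h_{G(r,1,\ell)}-h_W=r(p-1)/p$; the essential point is that, since the covering $S^W/S^{G(r,1,\ell)}$ is equivariant only up to a fixed element of $\operatorname{Gal}(\BBQ(\zeta_r)/\BBQ)$, the operator $\priminv$ does \emph{not} preserve the graded $W$-module type of a system of derivations. One then follows the argument of \Cref{thm:main1strong}. For part~(i): applying $\priminvm$ to the basis $\partial_{x_1},\dots,\partial_{x_\ell}$ of $D(\CA(W),0)=\Der(S)$ yields $\ell$ homogeneous derivations of polynomial degree $mh_W$; checking hyperplane by hyperplane that they lie in $D(\CA(W),m\omega)$ with coefficient determinant a nonzero multiple of $\prod_H\alpha_H^{me_H}$, Saito's criterion for multi-arrangements applies --- the degree count being $\ell\,mh_W=m\sum_H e_H=|m\omega|$ --- and gives freeness with all exponents $mh_W$. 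For part~(ii): start from a basis of $D(\CA(W))=D(\CA(W),1)$, which is free with exponents $\coexp(W)=\{n_1(V^*),\dots,n_\ell(V^*)\}$ (freeness of reflection arrangements of unitary reflection groups; see \cite{OS1980}), apply $\priminvm$, verify membership in $D(\CA(W),m\omega+1)$ and the determinant condition, and conclude again by Saito's criterion; the corresponding Hodge-filtration statement extending \Cref{thm:main1strong} comes out of the same construction.

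The step I expect to be the main obstacle is matching the degrees of the tuple constructed in part~(ii) with $\{mh_W+n_i(\Psi^{-m}(V^*))\}$. Each application of $\priminv$ raises polynomial degree by $h_W$ but, by the non-equivariance above, simultaneously replaces the $\ell$-dimensional $W$-module recording the exponents by its image under a fixed Galois automorphism of $\BBQ(\zeta_r)$; the crux is to identify that automorphism with Malle's permutation $\Psi^{-1}$, so that after $m$ steps the recording module is $\Psi^{-m}(V^*)$ and the output degrees are $mh_W+n_i(\Psi^{-m}(V^*))$. This is where the representation theory of the cyclotomic Hecke algebra of $W$ enters: one has to line up the explicit monomial description of $\priminv$ on $V/G(r,p,\ell)$ with the combinatorial description of $\Psi$ from \cite{malle:fake,opdam:dunkoperators,gordongriffeth}, and use the Beynon--Lusztig-type identity $n_i(U)+n_{d+1-i}(\Psi(U^*))=h_U$ --- invoked once per unit increase of $m$ --- as the bookkeeping that converts the naive degrees into the stated ones and makes them sum to $m\sum_H e_H+|\CA|$. (The last needs $\sum_i n_i(\Psi^{-m}(V^*))=|\CA|$, which follows by induction on $|m|$ from that identity together with $\ell h_W=|\refl|+|\CA|$, the classical $\sum_i n_i(V^*)=|\CA|$, and the equalities $h_{\Psi^k(V)}=h_{\Psi^k(V^*)}=h_W$, which hold because $\sum_{s\in\refl}\chi_V(s)$ is rational.) Once this dictionary between the analytic construction and the representation-theoretic permutation is in place, the remaining verifications are local: along each hyperplane of $\CA(W)$, that the constructed tuples lie in the respective multi-derivation modules and meet Saito's determinant condition --- just as in the proof of \Cref{thm:main1}.
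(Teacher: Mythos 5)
Your proposal takes a genuinely different route from the paper, and the divergence exposes a real gap. The reduction of the well-generated cases $p\in\{1,r\}$ to \Cref{thm:main1} is fine, but for $1<p<r$ your whole argument hangs on a ``primitive-integral'' operator $\priminv$ on $V/W$ obtained by transporting the flat structure of $G(r,1,\ell)$ through the degree-$p$ covering and ``correcting'' for the discrepancy of Coxeter numbers. No such construction is given, and there are concrete obstructions to its existence in the form you need. First, for $1<p<r$ the Coxeter number $h_W=(\ell-1)r+r/p$ exceeds every degree of $W$, so there is no invariant of degree $h_W$, no primitive vector field of polynomial degree $-(h_W-1)$ in $\DerSW$, and no analogue of the identity $\partial M_\xi/\partial t_\ell=\one_\ell$ that makes $\prim$ invertible in \Cref{prop:mainingredient}. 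Second, and decisively for part~(ii): any operator of the shape $\theta\mapsto\nabla_\theta\priminvm(E)$ with a single homogeneous universal element shifts all polynomial degrees uniformly by \eqref{eq:pdegnabla}, so starting from a homogeneous basis of $\FD(\CA,1)$ with degrees $\coexp(W)=\{n_i(V^*)\}$ it could only ever produce exponents $\{mh+n_i(V^*)\}$. But for $1<p<r$ the multiset $\{n_i(\Psi^{-m}(V^*))\}$ genuinely differs from $\{n_i(V^*)\}$ --- already for $G(4,2,2)$ and $m=1$ one has $\{n_i(V^*)\}=\{1,5\}$ while $\{n_i(\Psi^{-1}(V^*))\}=\{3,3\}$ --- so the target exponents are unreachable by a uniform shift. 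The ``Galois twist'' you invoke to escape this is precisely the object that would need to be constructed and identified with $\Psi^{-1}$; that identification is the hard content, not bookkeeping, and the paper pointedly calls the appearance of $\Psi$ here ``rather unexpected'' and leaves the analogous non-well-generated exceptional groups open precisely because no such intrinsic construction is known.

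The paper's actual proof is entirely different and elementary. \Cref{thm:main2strong} establishes freeness and explicit exponents for all multiplicities $x_1^{m_1}\cdots x_\ell^{m_\ell}\prod_{i<j}(x_i^r-x_j^r)^{2m}$, resp.\ exponent $2m+1$, by (a) writing down explicit rank-$2$ bases whose coefficients solve linear systems, with nondegeneracy supplied by the Gessel--Viennot lemma in the even case and Krattenthaler's determinant lemma (\Cref{prep:detlemma}, \Cref{detcorollary}) in the odd case, and (b) inducting on $\ell$ and on $\sum m_i$ via the Addition--Deletion theorem \Cref{thm:add-del-multi}, computing Euler multiplicities by localization (\Cref{prop:Euler}). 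The permutation $\Psi$ enters only a posteriori: the explicitly computed exponents are matched against $\{mh+n_i(\Psi^{-m}(V^*))\}$ using the Gordon--Griffeth formula \eqref{eq:psi} together with the semi-palindromicity identity \eqref{eq:constant_h}. If you want to pursue your plan, you would first have to actually build a flat structure on $V/G(r,p,\ell)$ for $1<p<r$ with a non-uniform (representation-dependent) degree shift; absent that, the combinatorial induction is the proof.
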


Note this time that $\exp_{\Psi^{-m}(V^*)}(W) = \big\{n_1(\Psi^{-m}(V^*)), \ldots , n_\ell(\Psi^{-m}(V^*)) \big\}$.
We prove a more general result in \Cref{thm:main2strong}.

\begin{remarks}
  (i) The group $G(r,p,\ell)$ is well-generated if and only if $p \in \{1,r\}$.
       Thus, \Cref{thm:main2} extends \Cref{thm:main1} to the class of imprimitive reflection groups that are not well-generated.

  (ii) While the simple arrangements of the reflection groups $G(r,1,\ell)$ and $G(r,p,\ell)$ for $1 < p < r$ coincide, the multi-arrangements above depend on the underlying group, since the multiplicities of the coordinate hyperplanes differ.

  (iii) \Cref{thm:main1,,thm:main2} only leave unresolved the remaining eight irreducible unitary reflection groups of exceptional type that are not well-generated, namely
  \[
    \mathcal{G}_{\operatorname{exc}} = \big\{ G_7,G_{11},G_{12},G_{13},G_{15},G_{19},G_{22},G_{31} \big\}.
  \]
  Computational evidence for each of these remaining groups with small values for the parameter~$m \in \BBN$ suggests that \Cref{thm:main2} also holds with $W = G(r,p,\ell)$ replaced by $W \in \mathcal{G}_{\operatorname{exc}}$.
  Note that all these groups are of rank~$2$ except for $G_{31}$ and that multi-arrangements of rank~$2$ are always 
  free~\cite[Cor.~7]{ziegler:multiarrangements}.

  (iv) The semi-palindromic property of the permutation~$\Psi$ of~$\Irr(W)$ in \Cref{thm:main2} is an analogue of a semi-palindromicity of the fake degrees as observed by Beynon and Lusztig~\cite[Prop.~A]{BL1978} and later explained by Opdam~\cite{Opd1995}.
        The definition of~$\Psi$ depends on the representation theory of the corresponding Hecke algebra~\cite{malle:fake,opdam:dunkoperators}.
        Moreover, it plays a crucial role in the study of rational Cherednik algebras~\cite[Thm.~1.6]{gordongriffeth}.
        The intrinsic appearance of~$\Psi$ in the present context of multi-derivations of reflection groups is rather unexpected.
\end{remarks}

\bigskip

The paper is organized as follows.
In \Cref{sect:preliminaries}, we provide all needed background on hyperplane arrangements and unitary reflection groups.
The proof of \Cref{thm:main1} is carried out in \Cref{sec:proofmain1}, along with its strengthened form, \Cref{thm:main1strong}.
\Cref{thm:main2} is proved in the final \Cref{sec:proofmain2} as a consequence of \Cref{thm:main2strong}.

\section{Preliminaries}
\label{sect:preliminaries}

We first provide some basic material on hyperplane arrange\-ments and multi-arrange\-ments, and their modules of derivations.
We then recall the needed background on unitary reflection groups.
For general information about reflection groups and their arrangements, we refer the reader to~\cite{bourbaki:groupes,orliksolomon:unitaryreflectiongroups,ziegler:multiarrangements,orlikterao:arrangements}.

\subsection{Multi-arrangements and their modules of derivations}
\label{ssect:free}

Let $S = S(V^*)$ denote the \defn{ring of polynomial functions} on~$V$ considered as the symmetric algebra of the dual space~$V^*$.
If $x_1, \ldots, x_\ell$ is a basis of~$V^*$, we identify~$S$ with the polynomial ring $\BBC[x_1, \ldots , x_\ell]$.
Letting $S_p$ denote the $\BBC$-subspace of~$S$ consisting of the homogeneous polynomials of degree $p$ (along with $0$),~$S$ is naturally $\BBZ$-graded by $S = \oplus_{p \in \BBZ}S_p$, where we consider $S_p = 0$ for $p < 0$.

Let $\DerS$ be the \defn{$S$-module of $\BBC$-derivations} of~$S$.
Then $\partial_{x_1}, \ldots, \partial_{x_\ell}$ is an $S$-basis of $\DerS$.
We say that $\theta \in \DerS$ is \defn{homogeneous of polynomial degree $p$} provided $\theta = \sum f_i \partial_{x_i}$, where $f_i \in S_p$ for each $1 \le i \le \ell$.
In this case we write $\pdeg \theta = p$.       
Let $\Der_{S_p}$ be the $\BBC$-subspace of $\DerS$ consisting of all homogeneous derivations of polynomial degree~$p$.
Then $\DerS$ is a graded~$S$-module, $\DerS = \oplus_{p\in \BBZ} \Der_{S_p}$.

\medskip

A \defn{hyperplane arrangement} $\CA$ in~$V$ is a finite collection of hyperplanes in~$V$.
For a subspace~$X$ of~$V$, we have the associated \defn{localization} of~$\CA$ at~$X$ given by
\[
  \CA_X \eqdef\{H \in \CA \mid X \subseteq H\} \subseteq \CA.
\]
Its \defn{rank} is defined to be the codimension of~$X$ in~$V$.

\medskip

Following Ziegler~\cite{ziegler:multiarrangements}, a \defn{multi-arrangement} $(\CA,\nu)$ is an arrangement~$\CA$ together with a \defn{multiplicity function} $\nu : \CA \to \BBN$ assigning to each hyperplane~$H\in\CA$ a multiplicity $\nu(H) \in \BBN$.
If $\nu \equiv 1$, then $(\CA,\nu)$ is called \defn{simple}.
We only consider \emph{central} multi-arrangements $(\CA,\nu)$, \ie, $0 \in H$ for every $H \in \CA$.
In this case, we fix $\alpha_H \in V^*$ with $H = \ker(\alpha_H)$ for $H \in \CA$.
The \defn{order} of $(\CA, \nu)$ is given by
\[
  |\nu| \eqdef |(\CA, \nu)| \eqdef \sum_{H \in \CA} \nu(H),
\]
and its \defn{defining polynomial} $Q(\CA,\nu) \in S$ is
\[
  Q(\CA,\nu) \eqdef \prod_{H \in \CA} \alpha_H^{\nu(H)}.
\]

\medskip

The \defn{module of derivations} of $(\CA, \nu)$ is defined by 
\[
  \FD(\CA, \nu) \eqdef \big\{\theta \in \DerS \mid \theta(\alpha_H) \in S \alpha_H^{\nu(H)} \text{ for each } H \in \CA \big\}.
\]

We say that $(\CA, \nu)$ is \defn{free} if $\FD(\CA, \nu)$ is a free~$S$-module~\cite[Def.~6]{ziegler:multiarrangements}.
In this case, $\FD(\CA, \nu)$ admits a basis $\{\theta_1, \ldots, \theta_\ell\}$ of~$\ell$ homogeneous derivations~\cite[Thm.~8]{ziegler:multiarrangements}.
While the $\theta_i$'s are not unique, their polynomial degrees $\pdeg \theta_i$ are.
The multiset of these polynomial degrees is the set of \defn{exponents} of the free multi-arrangement $(\CA,\nu)$.
It is denoted by
\[
  \exp(\CA,\nu) \eqdef \big\{ \pdeg(\theta_1),\ldots, \pdeg(\theta_\ell) \big\}.
\]

Next we record Ziegler's analogue of Saito's criterion.
The \defn{Saito matrix} of $\theta_1, \ldots, \theta_\ell \in \DerS$ is given by
\[
  M(\theta_1, \ldots, \theta_\ell) \eqdef
  \begin{bmatrix}
    \theta_1(x_1) & \cdots & \theta_1(x_\ell) \\
    \vdots & \ddots & \vdots \\
    \theta_\ell(x_1) & \cdots & \theta_\ell(x_\ell)
  \end{bmatrix},
\]
see~\cite[Def.~4.11]{orlikterao:arrangements}.

\begin{theorem}[{\cite[Thm.~8]{ziegler:multiarrangements}}]
\label{thm:zieglersaito}
  Let $(\CA, \nu)$ be a multi-arrangement, and let $\theta_1, \ldots, \theta_\ell \in \FD(\CA, \nu)$.
  Then the following are equivalent:
  \begin{enumerate}[(i)]
  \item $\{\theta_1, \ldots, \theta_\ell\}$ is an ~$S$-basis of $\FD(\CA, \nu)$.
  \item $\det M(\theta_1, \ldots, \theta_\ell) \doteq  Q(\CA, \nu)$.
  \end{enumerate}
In particular, if each $\theta_i$ is homogeneous, then both are moreover equivalent 
to the following:
\begin{enumerate}[(i)]
  \setcounter{enumi}{2}
  \item\label{eq:zieglersaito3} $\theta_1, \ldots, \theta_\ell$ are linearly independent over~$S$ and $\sum \pdeg \theta_i = \deg Q(\CA, \nu) = |\nu|$. 
\end{enumerate}
\end{theorem}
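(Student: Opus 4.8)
The plan is to reproduce the classical Ziegler/Saito argument, whose only substantive input is one divisibility statement. First I would record the \textbf{key lemma}: if $\eta_1,\dots,\eta_\ell \in \FD(\CA,\nu)$, then $Q(\CA,\nu) \mid \det M(\eta_1,\dots,\eta_\ell)$ in $S$. To prove it, I would fix $H \in \CA$ and extend $\alpha_H$ to a $\BBC$-basis $y_1 = \alpha_H, y_2,\dots,y_\ell$ of $V^*$. Since the $x_i$ are constant-coefficient linear combinations of the $y_k$, the Saito matrix of $\eta_1,\dots,\eta_\ell$ computed in the coordinates $y_k$ differs from $M(\eta_1,\dots,\eta_\ell)$ by right multiplication with a constant invertible matrix, so the two determinants are associates in $S$. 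In the $y$-coordinates the first column of the Saito matrix has entries $\eta_i(y_1) = \eta_i(\alpha_H) \in S\alpha_H^{\nu(H)}$, so $\alpha_H^{\nu(H)}$ divides that determinant, hence divides $\det M(\eta_1,\dots,\eta_\ell)$. As $S = \BBC[x_1,\dots,x_\ell]$ is a UFD and the $\alpha_H$, $H\in\CA$, are pairwise non-associate primes, multiplying these divisibilities over all $H$ gives $Q(\CA,\nu) = \prod_H \alpha_H^{\nu(H)} \mid \det M(\eta_1,\dots,\eta_\ell)$.

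Granting the lemma, I would deduce (ii) $\Rightarrow$ (i) by a Cramer-rule argument. If $\det M(\theta_1,\dots,\theta_\ell) \doteq Q(\CA,\nu)$, then in particular it is nonzero, so $\theta_1,\dots,\theta_\ell$ are linearly independent over the field of fractions of $S$, hence over $S$. For an arbitrary $\theta \in \FD(\CA,\nu)$, write $\theta = \sum_i c_i\theta_i$ with $c_i$ in the fraction field; Cramer's rule gives $c_i\,\det M(\theta_1,\dots,\theta_\ell) = \det M(\theta_1,\dots,\theta_{i-1},\theta,\theta_{i+1},\dots,\theta_\ell)$. The right-hand side is the determinant of the Saito matrix of an $\ell$-tuple of elements of $\FD(\CA,\nu)$, hence divisible by $Q(\CA,\nu)$ by the lemma, while the left-hand determinant is an associate of $Q(\CA,\nu)$; therefore $c_i \in S$. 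Thus $\{\theta_1,\dots,\theta_\ell\}$ spans $\FD(\CA,\nu)$ over $S$ and is $S$-linearly independent, i.e.\ it is an $S$-basis.

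For (i) $\Rightarrow$ (ii) I would localize at height-one primes. Assuming $\{\theta_i\}$ is an $S$-basis of $\FD(\CA,\nu)$, we have $\det M(\theta_1,\dots,\theta_\ell) \ne 0$ and $\FD(\CA,\nu) = \bigoplus_i S\theta_i$; since $S$ is a UFD it suffices to check that $v_{\mathfrak p}(\det M(\theta_1,\dots,\theta_\ell)) = v_{\mathfrak p}(Q(\CA,\nu))$ for every height-one prime $\mathfrak p$ of $S$. Over the discrete valuation ring $S_{\mathfrak p}$ the left-hand side is, by the theory of elementary divisors, the length of $(\DerS)_{\mathfrak p}/\bigl(\bigoplus_i S_{\mathfrak p}\theta_i\bigr) = (\DerS)_{\mathfrak p}/\FD(\CA,\nu)_{\mathfrak p}$. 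If $\mathfrak p \ne (\alpha_H)$ for all $H$, every $\alpha_H$ is a unit in $S_{\mathfrak p}$, so $\FD(\CA,\nu)_{\mathfrak p} = (\DerS)_{\mathfrak p}$ and both valuations are $0$. If $\mathfrak p = (\alpha_{H_0})$, only the condition at $H_0$ survives localization, so with $y_1 = \alpha_{H_0}$ one gets $\FD(\CA,\nu)_{\mathfrak p} = S_{\mathfrak p}\,\alpha_{H_0}^{\nu(H_0)}\partial_{y_1} \oplus \bigoplus_{j\ge 2} S_{\mathfrak p}\partial_{y_j}$, a submodule of colength $\nu(H_0) = v_{\mathfrak p}(Q(\CA,\nu))$. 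Hence $\det M(\theta_1,\dots,\theta_\ell) \doteq Q(\CA,\nu)$. Finally, when the $\theta_i$ are homogeneous, $\det M(\theta_1,\dots,\theta_\ell)$ is homogeneous of degree $\sum_i \pdeg\theta_i$ and $\deg Q(\CA,\nu) = |\nu|$; so given (iii), linear independence forces the determinant to be nonzero, the lemma forces $Q(\CA,\nu) \mid \det M(\theta_1,\dots,\theta_\ell)$, and the degree equality $\sum_i\pdeg\theta_i = |\nu|$ then forces $\det M(\theta_1,\dots,\theta_\ell) \doteq Q(\CA,\nu)$, which is (ii); conversely (ii) gives both a nonzero determinant, hence linear independence, and $\sum_i\pdeg\theta_i = \deg Q(\CA,\nu) = |\nu|$, which is (iii).

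The one place I expect to need care is the step in (ii) $\Rightarrow$ (i) guaranteeing that no element of $\FD(\CA,\nu)$ is missed by $\bigoplus_i S\theta_i$; the Cramer argument above handles it precisely because the key lemma applies to \emph{every} competing $\ell$-tuple, but an alternative is to note that $\FD(\CA,\nu)$ is reflexive — being the intersection over $H$ of the free modules $\{\theta \in \DerS : \theta(\alpha_H) \in S\alpha_H^{\nu(H)}\}$ — and then to conclude $\bigoplus_i S\theta_i = \FD(\CA,\nu)$ from the equality of their localizations at all height-one primes, which follows from $\det M(\theta_1,\dots,\theta_\ell) \doteq Q(\CA,\nu)$ exactly as in the previous paragraph. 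Everything else is bookkeeping with the defining polynomial and the identity $\deg Q(\CA,\nu) = |\nu|$.
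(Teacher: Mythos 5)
Your proof is correct and follows the standard argument for Ziegler's multi-arrangement version of Saito's criterion: the paper gives no proof of its own (it simply cites \cite[Thm.~8]{ziegler:multiarrangements}), and your key divisibility lemma, the Cramer-rule step for (ii)$\Rightarrow$(i), the localization at height-one primes for (i)$\Rightarrow$(ii), and the degree count for (iii) are exactly the ingredients of that proof. The only point deserving one more word is that under hypothesis (i) you should justify $\det M(\theta_1,\ldots,\theta_\ell)\neq 0$; this holds because $Q(\CA,\nu)\,\DerS\subseteq \FD(\CA,\nu)\subseteq \DerS$ forces $\FD(\CA,\nu)$ to have rank $\ell$, so any $S$-basis is linearly independent over the fraction field.
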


In the statement and later on, the sign~$\doteq$ denotes, as usual, equality up to a non-zero complex constant.
Terao's celebrated \emph{Addition-Deletion Theorem}~\cite{terao:freeI} plays a crucial role in the study of free arrangements, see~\cite[Thm.~4.51]{orlikterao:arrangements}.
We next describe its version for multi-arrange\-ments from~\cite{abeteraowakefield:euler}.
Let $(\CA, \nu)$ be a non-empty multi-arrangement, \ie, $|\nu| \geq 1$.
Fix $H_0$ in $\CA$ with $\nu(H_0) \geq 1$.
Its \defn{deletion} with respect to $H_0$ is given by $(\CA', \nu')$, where $\nu'(H_0) = \nu(H_0) - 1$ and $\nu'(H) = \nu(H)$ for all $H \neq H_0$.
If $\nu'(H_0) = 0$, we set $\CA' = \CA\setminus\{H_0\}$, and else set $\CA' = \CA$.
Its \defn{restriction} with respect to $H_0$ is given by $(\CA'', \nu^*)$, where $\CA'' = \{ H \cap H_0 \mid H \in \CA \setminus \{H_0\}\ \}$.
The \defn{Euler multiplicity} $\nu^*$ of $\CA''$ is defined as follows.
Let $Y \in \CA''$.
Since the localization $\CA_Y$ is of rank $2$, the multi-arrangement $(\CA_Y, \nu_Y)$ is free where we set $\nu_Y = \nu|_Y$ to be the restriction of~$\nu$ to $\CA_Y$ \cite[Cor.~7]{ziegler:multiarrangements}.
According to \cite[Prop.~2.1]{abeteraowakefield:euler}, the module of derivations $\FD(\CA_Y, \nu_Y)$ admits a particular homogeneous basis $\{\theta_Y, \psi_Y, \partial_3, \ldots, \partial_\ell\}$, where $\theta_Y$ is identified by the property that $\theta_Y \notin \alpha_0 \DerS$ and $\psi_Y$ by the property that $\psi_Y \in \alpha_0 \DerS$, where $H_0 = \ker \alpha_0$.
Then the Euler multiplicity $\nu^*$ is defined on $Y$ as $\nu^*(Y) = \pdeg \theta_Y$.
Crucial for our purpose is the fact that the value $\nu^*(Y)$ only depends on the~$S$-module $\FD(\CA_Y, \nu_Y)$.
Sometimes, $(\CA, \nu), (\CA', \nu')$ and $(\CA'', \nu^*)$ is referred to as the \emph{triple} of multi-arrangements with respect to $H_0$.

\begin{theorem}[{\cite[Thm.~0.8]{abeteraowakefield:euler}}]
\label{thm:add-del-multi}
  Suppose that $(\CA, \nu)$ is not empty, fix $H_0$ in $\CA$ and let  $(\CA, \nu), (\CA', \nu')$ and  $(\CA'', \nu^*)$ be the triple with respect to $H_0$. 
  Then any  two of the following statements imply the third:
  \begin{enumerate}[(i)]
    \item $(\CA, \nu)$ is free with $\exp (\CA, \nu) = \{ b_1, \ldots , b_{\ell -1}, b_\ell\}$;
    \item $(\CA', \nu')$ is free with $\exp (\CA', \nu') = \{ b_1, \ldots , b_{\ell -1}, b_\ell-1\}$;
    \item $(\CA'', \nu^*)$ is free with $\exp (\CA'', \nu^*) = \{ b_1, \ldots , b_{\ell -1}\}$.
  \end{enumerate}
\end{theorem}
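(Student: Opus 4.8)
The proof follows the pattern of Terao's original argument for the Addition--Deletion Theorem of simple arrangements, with Ziegler's criterion \Cref{thm:zieglersaito} replacing Saito's and with the Euler multiplicity $\nu^{*}$ playing the role of the (constantly~$1$) restricted multiplicity of the simple case. Fix $H_{0}=\ker\alpha_{0}\in\CA$, set $k\eqdef\nu(H_{0})\geq 1$, write $S''=S/\alpha_{0}S$ for the coordinate ring of $H_{0}\cong\BBC^{\ell-1}$, and let $f\mapsto\bar f$ denote reduction modulo $\alpha_{0}$. Record the numerical relations $|\nu|=|\nu'|+1$ and $\deg Q(\CA'',\nu^{*})=|\nu^{*}|$: consequently each of (i) and (ii) forces $\sum_{i=1}^{\ell}b_{i}=|\nu|$, while (iii) forces $\sum_{i=1}^{\ell-1}b_{i}=|\nu^{*}|$, so no two of the three statements are numerically redundant.

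\textbf{Step 1: the fundamental exact sequence.} The inclusion $\FD(\CA,\nu)\subseteq\FD(\CA',\nu')$ is immediate, since only the defining condition at $H_{0}$ changes and $S\alpha_{0}^{k}\subseteq S\alpha_{0}^{k-1}$. The technical core is to construct a graded $S$-linear restriction map $\rho\colon\FD(\CA',\nu')\to\Der_{S''}$ with $\ker\rho=\FD(\CA,\nu)$ and $\operatorname{im}\rho\subseteq\FD(\CA'',\nu^{*})$, producing a left-exact sequence
\[
0\longrightarrow\FD(\CA,\nu)\longrightarrow\FD(\CA',\nu')\xrightarrow{\ \rho\ }\FD(\CA'',\nu^{*}).
\]
Identifying the kernel is essentially formal: writing $\theta(\alpha_{0})=\alpha_{0}^{k-1}g_{\theta}$ for $\theta\in\FD(\CA',\nu')$, one arranges $\rho$ so that $\rho(\theta)=0$ exactly when $\alpha_{0}\mid g_{\theta}$, i.e.\ exactly when $\theta(\alpha_{0})\in S\alpha_{0}^{k}$, which is the extra condition cutting out $\FD(\CA,\nu)$ inside $\FD(\CA',\nu')$. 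The containment $\operatorname{im}\rho\subseteq\FD(\CA'',\nu^{*})$ is checked hyperplane by hyperplane in $\CA''$: for $Y\in\CA''$ the localization $(\CA_{Y},\nu_{Y})$ has rank~$2$, so it is free with the distinguished homogeneous basis $\{\theta_{Y},\psi_{Y},\partial_{3},\dots,\partial_{\ell}\}$ of \cite[Prop.~2.1]{abeteraowakefield:euler} in which $\pdeg\theta_{Y}=\nu^{*}(Y)$ by definition of the Euler multiplicity; expanding $\theta$ against this basis and reducing modulo $\alpha_{0}$ then gives $\rho(\theta)(\bar\alpha_{Y})\in S''\bar\alpha_{Y}^{\,\nu^{*}(Y)}$. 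The cokernel of $\rho$ need not vanish, and the arithmetic of the three implications is precisely the bookkeeping of how much of $\FD(\CA'',\nu^{*})$ is hit.

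\textbf{Step 2: deriving the three implications.} In each case two of the three modules are free with prescribed exponents, and one must produce an explicit homogeneous $S$-basis of the third and confirm it through Ziegler's criterion. The mechanism is uniform: from homogeneous bases of the two known free modules one assembles a candidate homogeneous basis of the third, using $\rho$, the inclusion $\FD(\CA,\nu)\subseteq\FD(\CA',\nu')$, multiplication by $\alpha_{0}$ (note $\alpha_{0}\FD(\CA',\nu')\subseteq\FD(\CA,\nu)$, which furnishes the auxiliary element in $\alpha_{0}\DerS$ needed to fill out an $\ell$-generated module), and elementary $S$-row and $S''$-row operations to match polynomial degrees. One then evaluates the relevant Ziegler matrix via the elementary determinant factorization $\det M(\theta_{1},\dots,\theta_{\ell})\doteq\alpha_{0}\cdot(\cdots)$ valid whenever some $\theta_{i}\in\alpha_{0}\DerS$; its reduction modulo $\alpha_{0}$ links the three Ziegler matrices and transfers the equalities $\det M\doteq Q(\CA,\nu)$, $\det M\doteq Q(\CA',\nu')$, $\det M\doteq Q(\CA'',\nu^{*})$ among the arrangements. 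Together with the already-compatible degree sums $\sum b_{i}=|\nu|$, $\sum_{i<\ell}b_{i}=|\nu^{*}|$, Ziegler's criterion in the form \Cref{thm:zieglersaito}(iii) then upgrades ``$\ell$ (resp.\ $\ell-1$) $S$-independent homogeneous derivations of the prescribed polynomial degrees'' to ``an $S$-basis'', which is the desired missing statement. The three cases differ only in which pair of bases is used as input and in the direction of transfer along $\rho$.

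\textbf{Main obstacle.} The genuinely new difficulty, invisible in the simple case (where $\nu^{*}\equiv 1$), is Step~1: proving that $\rho$ really lands in $\FD(\CA'',\nu^{*})$ --- equivalently, that the Euler multiplicity, although defined only indirectly through the distinguished bases of the rank-$2$ localizations $(\CA_{Y},\nu_{Y})$, and depending only on the $S$-module $\FD(\CA_{Y},\nu_{Y})$, is exactly the invariant recording the order of vanishing along $Y$ of derivations restricted from $\FD(\CA',\nu')$. A secondary, more routine obstacle is tracking \emph{exponents}, not merely freeness, throughout Step~2, which is what forces the determinant and degree bookkeeping above.
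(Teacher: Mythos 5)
The paper offers no proof of this statement; it is imported verbatim from \cite[Thm.~0.8]{abeteraowakefield:euler}, so your proposal can only be measured against the Abe--Terao--Wakefield argument. Your skeleton (an exact sequence tying the three modules together, then Ziegler's criterion \Cref{thm:zieglersaito} for the degree bookkeeping) is the right one, but Step~1 sets up the exact sequence with the wrong maps, and this is not cosmetic. Any $S$-linear map $\rho\colon\FD(\CA',\nu')\to\Der_{S''}$ with $\ker\rho=\FD(\CA,\nu)$ factors through the quotient $\FD(\CA',\nu')/\FD(\CA,\nu)$; writing $\theta(\alpha_0)=\alpha_0^{k-1}g_\theta$, the assignment $\theta\mapsto g_\theta\bmod\alpha_0$ embeds that quotient into $S''$, so it is an $S''$-module of rank at most one (if both modules are free with the exponents in (i) and (ii), its Hilbert series is $t^{b_\ell-1}/(1-t)^{\ell-1}$). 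Hence the image of your $\rho$ inside the rank-$(\ell-1)$ module $\FD(\CA'',\nu^*)$ has rank at most one, and for $\ell\ge 3$ no cokernel bookkeeping can recover the $\ell-1$ exponents $b_1,\dots,b_{\ell-1}$; in particular the addition direction (ii)$+$(iii)$\Rightarrow$(i), which needs an $(\ell-1)$-element independent family coming from $\FD(\CA'',\nu^*)$, is inaccessible from your sequence.

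The correct sequence, which is the actual content of the Euler restriction theorem in \loccit, is
\[
0\longrightarrow\FD(\CA',\nu')\xrightarrow{\ \cdot\alpha_0\ }\FD(\CA,\nu)\xrightarrow{\ \rho\ }\FD(\CA'',\nu^*),
\]
where $\rho$ is the honest restriction $\theta\mapsto\theta|_{H_0}$ --- well defined on $\FD(\CA,\nu)$ because $\nu(H_0)\ge 1$ forces $\theta(\alpha_0)\in\alpha_0S$ --- and the first map is multiplication by $\alpha_0$, which you do record but relegate to an auxiliary role in Step~2. Exactness at the middle is the elementary computation $\FD(\CA,\nu)\cap\alpha_0\DerS=\alpha_0\FD(\CA',\nu')$, and the Hilbert series of $\FD(\CA,\nu)/\alpha_0\FD(\CA',\nu')$ then matches a free $S''$-module with exponents $b_1,\dots,b_{\ell-1}$, which is exactly the arithmetic your statement of the theorem requires. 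The genuinely hard point, which you correctly single out, is $\operatorname{im}\rho\subseteq\FD(\CA'',\nu^*)$, and your localization-at-$Y$ argument with the distinguished basis $\{\theta_Y,\psi_Y,\partial_3,\dots,\partial_\ell\}$ is the right mechanism for it --- but it must be applied to derivations in $\FD(\CA,\nu)$, not $\FD(\CA',\nu')$. With the sequence corrected, your Step~2 (assembling candidate bases, using $\alpha_0\FD(\CA',\nu')\subseteq\FD(\CA,\nu)$ and surjectivity of $\rho$ in the relevant cases, and closing with \Cref{thm:zieglersaito}) does follow the lines of \loccit.
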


We need the following fact in the sequel.

\begin{lemma}[{\cite[Prop.~4.1(1)]{abeteraowakefield:euler}}]
  \label{prop:Euler}
  Let $H_0\in\CA$.
  Suppose $X\in\CA^{H_0}$ with $\CA_X = \{ H_0, H\}$.
  For $\nu$ a multiplicity on $\CA$, we have
  $
    \nu^*(X) = \nu(H).
  $
\end{lemma}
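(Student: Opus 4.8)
The plan is to compute the localized module $\FD(\CA_X,\nu_X)$ explicitly in adapted coordinates and to write down a homogeneous basis that is already in the normal form used to define the Euler multiplicity, so that $\nu^*(X)$ can simply be read off.

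First, since $X\in\CA^{H_0}$ and $\CA_X=\{H_0,H\}$, we have $X=H\cap H_0$, so the linear forms $\alpha_0\eqdef\alpha_{H_0}$ and $\alpha_H$ are linearly independent. I would choose a basis $x_1,\ldots,x_\ell$ of $V^*$ with $x_1=\alpha_0$ and $x_2=\alpha_H$, so that $\partial_{x_1},\ldots,\partial_{x_\ell}$ is the dual $S$-basis of $\DerS$. Write $a\eqdef\nu(H_0)$ and $b\eqdef\nu(H)$, with $a\ge 1$ since the Euler multiplicity $\nu^*$ is defined relative to a hyperplane of positive multiplicity, and set $\nu_X\eqdef\nu|_{\CA_X}$. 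As $\CA_X$ consists only of $H_0=\ker x_1$ and $H=\ker x_2$,
\[
  \FD(\CA_X,\nu_X)=\bigl\{\theta\in\DerS \,\big|\, \theta(x_1)\in Sx_1^{a},\ \theta(x_2)\in Sx_2^{b}\bigr\}.
\]

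Next I would verify that $\{\theta_X,\psi_X,\partial_{x_3},\ldots,\partial_{x_\ell}\}$, with $\theta_X\eqdef x_2^{b}\partial_{x_2}$ and $\psi_X\eqdef x_1^{a}\partial_{x_1}$, is an $S$-basis of $\FD(\CA_X,\nu_X)$. All $\ell$ of these derivations are homogeneous, and they lie in $\FD(\CA_X,\nu_X)$ since $\theta_X(x_1)=0$, $\theta_X(x_2)=x_2^{b}$, $\psi_X(x_1)=x_1^{a}$, $\psi_X(x_2)=0$ and $\partial_{x_j}(x_1)=\partial_{x_j}(x_2)=0$ for $j\ge 3$. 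The Saito matrix $M(\theta_X,\psi_X,\partial_{x_3},\ldots,\partial_{x_\ell})$ is block triangular with an $(\ell-2)\times(\ell-2)$ identity block and a $2\times2$ block of determinant $-x_1^{a}x_2^{b}$, so its determinant is $\doteq x_1^{a}x_2^{b}=Q(\CA_X,\nu_X)$; hence by \Cref{thm:zieglersaito} this is a basis. Moreover it has precisely the form appearing in \cite[Prop.~2.1]{abeteraowakefield:euler}: indeed $\psi_X=x_1\cdot\bigl(x_1^{a-1}\partial_{x_1}\bigr)\in\alpha_0\DerS$ because $a\ge 1$, while $\theta_X=x_2^{b}\partial_{x_2}\notin\alpha_0\DerS$ since no polynomial derivation $\eta$ satisfies $x_1\eta=x_2^{b}\partial_{x_2}$.

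Since, as recorded before the statement, the value $\nu^*(X)=\pdeg\theta_X$ depends only on the $S$-module $\FD(\CA_X,\nu_X)$ and not on the chosen basis of this form, we conclude $\nu^*(X)=\pdeg\theta_X=b=\nu(H)$. I expect no real obstacle here beyond correctly unwinding the definitions; the only point demanding a little care is the normalization $\psi_X\in\alpha_0\DerS$, which relies on the standing hypothesis $\nu(H_0)\ge 1$ under which the Euler multiplicity is defined in the first place.
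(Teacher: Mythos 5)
Your proof is correct. Note, however, that the paper offers no argument of its own for this lemma: it is quoted verbatim as \cite[Prop.~4.1(1)]{abeteraowakefield:euler}, so there is no internal proof to compare against. What you have written is a self-contained verification that bypasses the citation: you exhibit the explicit basis $\{x_2^{b}\partial_{x_2},\,x_1^{a}\partial_{x_1},\,\partial_{x_3},\ldots,\partial_{x_\ell}\}$ in coordinates adapted to $\alpha_0,\alpha_H$, confirm via Ziegler's criterion (\Cref{thm:zieglersaito}) that it is a basis since the Saito determinant is $\doteq x_1^{a}x_2^{b}=Q(\CA_X,\nu_X)$, check that it is already in the normal form of \cite[Prop.~2.1]{abeteraowakefield:euler} (the point $\psi_X\in\alpha_0\DerS$ using $\nu(H_0)\ge 1$, and $x_1\nmid x_2^{b}$ for $\theta_X\notin\alpha_0\DerS$, are both handled correctly, including the degenerate case $\nu(H)=0$), and read off $\nu^*(X)=\pdeg\theta_X=\nu(H)$. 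The one ingredient you still import from the literature is the well-definedness of $\nu^*(X)$, i.e.\ that $\pdeg\theta_X$ is independent of the choice of basis in this normal form; you are entitled to do so since the paper records this fact when setting up the Euler multiplicity. The trade-off is the usual one: the citation buys brevity and the full strength of the Abe--Terao--Wakefield machinery for arbitrary rank-two localizations, whereas your computation buys transparency precisely in the only case ($|\CA_X|=2$) that this lemma addresses.
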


\subsection{Unitary Reflection Groups}
\label{subsec:unitaryreflectiongroups}

Let $V \cong \BBC^\ell$, and consider a finite subgroup~$W$ of $\GL(V)$.
Then $W$ is a \defn{unitary reflection group} if it is generated by its subset~$\refl = \refl(W)$ of \defn{reflections}, that is, the elements~$r \in W$ for which the \defn{fixed space}
\[
  \Fix(r) \eqdef \ker(\one - r) = \{ v \in V \mid r v = v \} \subseteq V
\]
is a hyperplane.
We denote by $\CA = \CA(W)$ the associated \defn{reflection arrangement} given by the collection of the reflecting hyperplanes. 
For $H \in \CA$, let $W_H = \{w \in W \mid \Fix(w) \supseteq H\}$ be
the pointwise stabilizer of~$H$ in~$W$ and set $e_H = |W_H|$.
Indeed, the elements in $W_H$ except the identity are exactly the reflections $r \in \refl$ such that $\Fix(r) = H$, explaining the equality
\begin{equation}
\label{eq:R}
  |\refl| + |\CA| = \sum_{H \in \CA} e_H.
\end{equation}
Results of Shephard and Todd~\cite{shephardtodd} and of Chevalley~\cite{chevalley} distinguish unitary reflection groups as those finite subgroups of $\GL(V)$ for which the invariant subalgebra of the action on the symmetric algebra $S = S(V^*)\cong\BBC[x_1,\ldots,x_\ell] $ yields again a polynomial algebra,
\[
  S^W = S(V^*)^W \cong \BBC[f_1,\ldots,f_\ell].
\]
While the \defn{basic invariants} $f_1,\ldots,f_\ell$ are not unique, they can be chosen to be homogeneous, and then their degrees $d_1 \leq \cdots \leq d_\ell$ are uniquely determined and called the \defn{degrees} of~$W$.

\medskip

The group~$W$ is called \defn{irreducible} if it does not preserve a proper non-trivial subspace of~$V$.
It is well-known that such an irreducible reflection group can be generated either by~$\ell$ or by~$\ell+1$ reflections.
An important subclass of irreducible unitary reflection groups are those that are \defn{well-generated}, \ie, which can be generated by~$\ell$ reflections.
In particular, this subclass contains all (complexifications of) irreducible \emph{real} reflection groups and all \emph{Shephard groups} (symmetry groups of regular complex polytopes~\cite[Def.~6.119]{orlikterao:arrangements}).

\medskip

Let $S_+^W$ denote the~$W$-invariants without constant term, and let $\Coinv(W) \eqdef S / S_+^W$ be the \defn{ring of coinvariants} of~$W$.
Observe that $\Coinv(W)$ is also a graded~$W$-module, and indeed isomorphic to the regular representation of~$W$, see~\cite[\S 4.4]{lehrertaylor}.
Thus, an irreducible representation~$U$ in $\Irr(W)$ of dimension~$d$ occurs~$d$ times in $\Coinv(W)$ as a constituent.
The \defn{$U$-exponents} of~$W$ are then given by the multiset of~$d$ homogeneous degrees in the coinvariant algebra of~$W$ in which~$U$ appears,
\begin{equation*}
  \label{eq:expU} \exp_U(W) = \{n_1(U) \leq \ldots \leq n_{d}(U)\}.
\end{equation*}
In particular, $\exp(W) = \exp_V(W)$ are the \defn{exponents of~$W$} and $\coexp(W) = \exp_{V^*}(W)$ are the \defn{coexponents of~$W$}.
It is moreover well-known that the degrees of~$W$ and the exponents are related by $d_i = n_i(V) + 1$, implying
\begin{equation}
\label{eq:R2}
  |\refl| = \sum_{i=1}^\ell n_i(V).
\end{equation}

Terao showed in~\cite{terao:freereflections} that the reflection arrangement $\CA$ of~$W$ is free,
and that the exponents of the arrangement coincide with the coexponents of~$W$, \cf~\cite[Thm.~6.60]{orlikterao:arrangements},
\[
  \exp \CA = \coexp(W).
\]

Consequently, thanks to~\cite[Thm.~4.23]{orlikterao:arrangements},
 we have 
\begin{equation}
\label{eq:a}
  |\CA| = \sum_{i=1}^\ell n_i(V^*).
\end{equation}

The next definition can be found in~\cite{gordongriffeth}.
The two equalities follow from~\eqref{eq:R},~\eqref{eq:R2}, and~\eqref{eq:a}.

\begin{definition}
\label{defn:coxeternr}
  Let~$W$ be an irreducible unitary reflection group.
  The \defn{Coxeter number}~$h = h_W$ is defined as
  \[
  h \eqdef h_W \eqdef \frac{1}{\ell} \sum_{H \in \CA} e_H = \frac{1}{\ell}\big(|\refl| + |\CA|\big) = \frac{1}{\ell} \sum_{i=1}^\ell\big(n_i(V) + n_i(V^*)\big).
  \]
\end{definition}

\begin{remark}
\label{rem:well-generated}
  It was observed by Orlik and Solomon in~\cite[Thm.~5.5]{OS1980} that the group~$W$ is well-generated if and only if the exponents and the coexponents pairwise sum up to the Coxeter number.
  This is,
  \[
    n_i(V) + n_{\ell+1-i}(V^*) = h
  \]
  for all $1 \leq i \leq \ell$.
  In this case, the Coxeter number $h = d_\ell = n_\ell(V) + 1 > d_{\ell-1}$ is the unique largest degree of a fundamental invariant, see~\cite[\S 12.6]{lehrertaylor}.
\end{remark}

The \defn{fake degree} of~$U$ in $\Irr(W)$ of dimension~$d$ is defined to be the polynomial 
\[
  f_U(q) \eqdef \sum_{i=1}^{d} q^{ n_i(U) } \in \BBN[q],
\]
\cf~\cite[Eq.~(6.1)]{malle:fake}.
In~\cite[Thm.~6.5]{malle:fake}, Malle showed that there is a permutation $\Psi$ of $\Irr(W)$ 
so that the fake degree polynomials $f_U(q)$ satisfy the \emph{semi-palindromic} condition
\begin{equation}
\label{eq:palindromic}
  f_U(q) = q^{h_U} f_{\Psi(U^*)}(q\inverse),
\end{equation}
where
\begin{align}
\label{eq:hU}
  h_U \eqdef |\refl|- \sum_{r \in \refl}\chi_U(r)/\chi_U(1).
\end{align}
Equivalently,~$h_U$ is the integer by which the central element $\sum_{r \in \refl}(\one-r) \in\BBC[W]$ acts on~$U$.
In particular, for any~$U$ in $\Irr(W)$ of dimension~$d$, we have
\[
  n_i(U) + n_{d+1-i}(\Psi(U^*)) = h_U.
\]
The following observations provide, for later reference, the formula in \Cref{thm:main2}\eqref{eq:main22} in a form analogous to the one used in~\cite[Sec.~3]{gordongriffeth}.

\begin{lemma}
\label{lem:hUstability}
  The parameter~$h_U$ defined in~\eqref{eq:hU} satisfies $h_U = h_{U^*}$ and $h_U = h_{\Psi(U)}$.
  In particular, we have, for any $1 \leq i \leq\ell$ and any $m \in \BBN$,
  \begin{align}
  \label{eq:constant_h}
    m h +n_i(\Psi^{-m}(V^*)) = (m+1)h -n_{\ell +1-i}(\Psi^{-m-1}(V^*)^*). 
  \end{align}
\end{lemma}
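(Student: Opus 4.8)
The plan is to establish the two scalar identities $h_U=h_{U^*}$ and $h_U=h_{\Psi(U)}$ first, and then to obtain \eqref{eq:constant_h} as a purely formal consequence of these together with the relation $n_i(U)+n_{d+1-i}(\Psi(U^*))=h_U$ recorded just above the lemma and the equality $h_V=h_W=h$.

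For $h_U=h_{U^*}$ I would argue straight from \eqref{eq:hU}. One has $\chi_{U^*}(r)=\overline{\chi_U(r)}=\chi_U(r\inverse)$, and $r\mapsto r\inverse$ is a bijection of $\refl$ fixing each reflecting hyperplane; hence $\sum_{r\in\refl}\chi_{U^*}(r)=\sum_{r\in\refl}\chi_U(r)$, while $\chi_{U^*}(\one)=\chi_U(\one)$, so $h_{U^*}=h_U$. Equivalently, $\sum_{r\in\refl}(\one-r)\in\BBZ[W]$ is fixed by the anti-involution $w\mapsto w\inverse$ of $\BBC[W]$ (since $\refl$ is), so it acts by one and the same scalar on $U$ and on its contragredient.

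The identity $h_U=h_{\Psi(U)}$ is the crux, and here \eqref{eq:palindromic} alone is not enough: it determines $h_U$ only through the fake degrees of $U$ and of $\Psi(U^*)$, and applying it twice merely reproduces itself, giving no handle on $h_{\Psi(U)}$. Instead I would invoke the representation-theoretic origin of $\Psi$: by Malle's construction the permutation $\Psi$ is governed by the Galois action on the irreducible characters (of $W$, equivalently of the associated cyclotomic Hecke algebra), so that each $\Psi(U)$ is a Galois conjugate of $U^*$; see \cite[Sec.~6C]{malle:fake} and \cite{opdam:dunkoperators}. Now $h_U\in\BBZ$ — this is precisely the ``Equivalently'' remark after \eqref{eq:hU}, as $\sum_{r\in\refl}(\one-r)$ has integer coefficients — hence $h_U$ is fixed by every field automorphism, and since $h_{U^\gamma}=\gamma(h_U)$ the function $h$ is constant on each Galois orbit of $\Irr(W)$. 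Combined with $h_U=h_{U^*}$ this yields $h_{\Psi(U)}=h_{U^*}=h_U$. For $W=G(r,p,\ell)$, the only case actually used in \Cref{thm:main2}, one may alternatively verify $h_U=h_{\Psi(U)}$ by inspection of Malle's explicit description of $\Psi$ on the multipartitions labelling $\Irr(W)$, using the reformulation $h_U=\frac{1}{\dim U}\sum_{H\in\CA}e_H\bigl(\dim U-\dim U^{W_H}\bigr)$ of \eqref{eq:hU} obtained by evaluating the reflection characters hyperplane by hyperplane.

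Granting the two identities, \eqref{eq:constant_h} is immediate. Apply $n_j(X)+n_{\ell+1-j}(\Psi(X^*))=h_X$ to $X=\Psi^{-m-1}(V^*)^*$, so that $\Psi(X^*)=\Psi^{-m}(V^*)$ and $\dim X=\ell$, with the index $\ell+1-i$ in place of $j$:
\[
  n_i\bigl(\Psi^{-m}(V^*)\bigr)=h_{\Psi^{-m-1}(V^*)^*}-n_{\ell+1-i}\bigl(\Psi^{-m-1}(V^*)^*\bigr).
\]
By the two identities, $h_{\Psi^{-m-1}(V^*)^*}=h_{\Psi^{-m-1}(V^*)}=h_{V^*}=h_V=h$, and adding $mh$ to both sides gives \eqref{eq:constant_h}. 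Thus the only genuine obstacle is the middle step $h_U=h_{\Psi(U)}$, which forces one to go beyond \eqref{eq:palindromic} and use the Galois-theoretic provenance of $\Psi$ (or a type-by-type verification).
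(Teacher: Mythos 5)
Your treatment of $h_U=h_{U^*}$ (via $r\mapsto r\inverse$ on $\refl$) and your formal deduction of \eqref{eq:constant_h} from the two invariances together with the semi-palindromicity applied to $U=\Psi^{-m-1}(V^*)^*$ are both correct and coincide with what the paper does. You also correctly identify that \eqref{eq:palindromic} alone cannot yield $h_U=h_{\Psi(U)}$.

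The gap is in your justification of $h_U=h_{\Psi(U)}$. The premise that ``each $\Psi(U)$ is a Galois conjugate of $U^*$'' --- with Galois acting on the character values of $W$ --- is false, and the parenthetical ``of $W$, equivalently of the associated cyclotomic Hecke algebra'' is exactly where it goes wrong. Malle's permutation is induced by the action of $\operatorname{Gal}(K(y)/K(q))$ on characters of the \emph{generic} Hecke algebra, transported to $\Irr(W)$ via specialisation; this Galois group fixes the character field $K$ of $W$ pointwise, so the resulting permutation of $\Irr(W)$ need not come from any field automorphism applied to $W$-character values. Concretely: for $W(E_7)$ all characters are rational and self-dual, yet $\Psi$ is non-trivial (the Beynon--Lusztig pairs), so $\Psi(U)$ cannot always lie in the Galois orbit of $U^*$. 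Even within the scope of \Cref{thm:main2}, take $W=G(6,3,2)$: its character field is $\BBQ(\zeta_3)$, so every Galois orbit in $\Irr(W)$ has size at most $2$, whereas the $\Psi$-orbit of $V^*$ has size $e=3$ (it has size dividing $\ord(\Psi)=3$ and is not a fixed point since $W$ is not well-generated). Since a Galois conjugate of $U^*$ is the same thing as a Galois conjugate of $U$ (complex conjugation lies in the Galois group and sends $U$ to $U^*$), your claim would force $\Psi$ to preserve Galois orbits, which is impossible here. So the middle step is not established; your conclusion is true, but for a different reason. The paper closes this step by citing the identification of $\Psi$ with the operator $\phi^{\id}_{-\frac{1}{h},-1-\frac{1}{h}}$ of Gordon--Griffeth \cite[\S 2.12]{gordongriffeth} together with their observation \cite[\S 2.8]{gordongriffeth} that $h_U$ is invariant under this operator (intuitively: $\Psi$ arises from shifting Cherednik/Hecke parameters by integers, an operation along which $h_U$ is constant). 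Your proposed fallback --- a case-by-case check on the multipartition labels for $G(r,p,\ell)$ using $h_U=\frac{1}{\dim U}\sum_{H\in\CA}e_H(\dim U-\dim U^{W_H})$ (which is a correct reformulation of \eqref{eq:hU}) --- would also work, but you have only gestured at it, not carried it out.
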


\begin{proof}
  The equality $h_U = h_{U^*}$ is a direct consequences of~\eqref{eq:hU}.
  The equality $h_U = h_{\Psi(U)}$ follows, for example, from the description of $\Psi$ as the operator $\phi^{\id}_{-\frac{1}{h},-1-\frac{1}{h}}$ in~\cite[\S 2.12]{gordongriffeth} together with the observation in~\cite[\S 2.8]{gordongriffeth} that $h_{\phi^{\id}_{-\frac{1}{h},-1-\frac{1}{h}}(U)} = h_U$.
  Plugging in $\Psi^{-m-1}(V^*)^*$ for the irreducible representation~$U$ in~\eqref{eq:palindromic} 
  and using that $h_{\Psi^{-m-1}(V^*)^*} = h_V = h$ yields~\eqref{eq:constant_h}.
\end{proof}

See also~\cite[Prop.~7.4]{opdam:dunkoperators} and~\cite[\S~1.4]{gordongriffeth} for further properties of the permutation $\Psi$ of $\Irr(W)$.
Note that~$\Psi(V^*) = V^*$ if and only if~$W$ is well-generated~\cite[Cor.~4.9]{malle:fake}.

\medskip

We finally define the \defn{order multiplicity} $\omega$ of the reflection arrangement
$\CA = \CA(W)$ by $\omega(H) = e_H$ for $H \in \CA$.
In other words, the multiplicities are chosen so that the defining polynomial $Q(\CA(W), \omega)$ of the multi-arrangement $(\CA(W), \omega)$ is the discriminant of~$W$, \cf~\cite[Def.~6.44]{orlikterao:arrangements},
\[
  Q(\CA(W), \omega) = \prod_{H \in \CA(W)} \alpha_H^{e_H}.
\]

\section{Proof of \Cref{thm:main1}}
\label{sec:proofmain1}

In this section, we prove a strengthened version of \Cref{thm:main1}.
Our method is based on the approach by Yoshinaga~\cite{yoshinaga:multicoxeter}, also relying strongly on recent developments of flat systems of invariants for well-generated unitary reflection groups in the context of isomonodromic deformations and differential equations of Okubo type due to Kato, Mano and Sekiguchi~\cite{KMS2015}.
See \Cref{thm:main1strong} for the explicit formulation.

\bigskip

Let $\nabla : \DerS \times \DerS \rightarrow \DerS$ be an \defn{affine connection}.
Recall that $\nabla$ is $S$-linear in the first parameter and $\BBC$-linear in the second, satisfying the Leibniz rule
\[
  \nabla_\delta(p \delta') = \delta(p)\delta' + p \nabla_\delta(\delta')
\]
for $\delta,\delta' \in \DerS$.
The connection~$\nabla$ is \defn{flat} if $\nabla_\delta(\partial_{x_i}) = 0$ for all $\delta \in \DerS$, or, equivalently,
\begin{equation}
  \label{eq:flatconnect} \nabla_\delta(\delta') = \sum_i (\delta p_i) \partial_{x_i}
\end{equation}
for $\delta, \delta' \in \DerS$ with $\delta' = \sum p_i \partial_{x_i}$.
Alternatively, this can be characterized by
\begin{align}
  \nabla_\delta(\delta')(\alpha) = \delta(\delta'(\alpha)) \label{eq:defnabla}
\end{align}
for all $\alpha \in V^*$.
Observe that for ~$\nabla$ flat and~$\delta,\delta'$  homogeneous,~\eqref{eq:flatconnect} 
implies that the derivation $\nabla_\delta(\delta')$ is again homogeneous with  
polynomial degree
\begin{equation}
  \label{eq:pdegnabla}\pdeg\big(\nabla_\delta(\delta')\big) = \pdeg(\delta) + \pdeg(\delta') - 1.
\end{equation}

In the sequel, we largely follow the construction of flat systems of invariants as given in~\cite[Sec.~6]{KMS2015} in order to lift the constructions in~\cite{yoshinaga:multicoxeter} to the well-generated case.

\bigskip

As before, we assume in this section that~$W$ is an irreducible well-generated unitary reflection group.
Let $\InvSpecial_1,\ldots,\InvSpecial_\ell$ be the special homogeneous fundamental invariants in $\BBC[\bbx]$ with $\bbx = (x_1,\ldots,x_\ell)$, as given in~\cite[Thm.~6.1]{KMS2015}.
Recall that $\deg\big(\InvSpecial_i\big) = d_i = n_i(V) + 1$ and $\BBC[\InvSpecial_1,\ldots,\InvSpecial_\ell] \cong S^W$.

Consider indeterminates $\bbt = (t_1,\ldots,t_\ell)$ together with the map $t_i \mapsto \InvSpecial_i$ giving an isomorphism 
\[
R \eqdef \BBC[\bbt] \cong \BBC[\InvSpecial_1,\ldots,\InvSpecial_\ell].
\]
Set moreover $\BBC[\bbt'] \eqdef \BBC[t_1,\ldots,t_{\ell-1}]$, its subring generated by $\bbt' = (t_1,\ldots,t_{\ell-1})$.
In order to keep track of the information about the degrees of~$\InvSpecial_1,\ldots,\InvSpecial_\ell$, 
following~\cite[Sec.~6]{KMS2015}, 
we define \defn{weights} of the variables~$t_i$ by
\[
  \weight(t_i) \eqdef \deg(\InvSpecial_i) / h = d_i / h = ( n_i(V) + 1 ) / h.
\]
As usual, set
\begin{equation*}
  \Jtx \eqdef
    \colvecx (t_1,\ldots,t_\ell) = 
    \begin{bmatrix}
      \partial t_1 / \partial x_1 & \cdots & \partial t_\ell / \partial x_1 \\
      \vdots & \ddots & \vdots \\
      \partial t_1 / \partial x_\ell & \cdots & \partial t_\ell / \partial x_\ell
    \end{bmatrix}  \in \BBC[\bbx]^{\ell\times\ell}
\end{equation*}
with inverse matrix $\Jxt \eqdef \Jtx^{-1} = \rowvect^\tr (x_1,\ldots,x_\ell)$.
It is well-known that $\det \Jtx \doteq \prod_{H \in \CA} \alpha_H^{e_H-1}$, see~\cite[Thm.~6.42]{orlikterao:arrangements}.

\medskip

The \defn{primitive vector field}
\[
D \eqdef \partial_{t_\ell} \in \DerR
\]
is given by 
\begin{equation*}
D = \det \Jxt
          \begin{vmatrix}
            \frac{\partial t_1}{\partial x_1} & \cdots & \frac{\partial t_{\ell-1}}{\partial x_1} & \frac{\partial}{\partial x_1} \\
            \vdots & \ddots & \vdots & \vdots \\
            \frac{\partial t_1}{\partial x_\ell} & \cdots & \frac{\partial t_{\ell-1}}{\partial x_\ell} & \frac{\partial}{\partial x_\ell} \\
          \end{vmatrix},
\label{eq:defD}
\end{equation*}
implying in particular that~$D$ is homogeneous with
\begin{equation}
  \label{eq:pdegD}\pdeg(D) = - n_\ell(V) = -(h-1)
\end{equation}
when considered inside $\sum F \partial_{x_i}$ for the fraction field~$F$ of~$S$.
We have seen in \Cref{rem:well-generated} that $h = d_\ell > d_{\ell-1}$.
The primitive vector field~$D$ is thus, up to a non-zero complex constant, independent of the given choice of fundamental invariants.

\bigskip

Consider $X \eqdef V \big/ W = \spec(\BBC[\bbt])$ and let $\hh(\bbt)$ be the \defn{discriminant} of~$W$ given by 
\[
  \hh\big(\InvSpecial_1(\bbx),\ldots,\InvSpecial_\ell(\bbx)\big) = \prod_{H \in \CA} \alpha_H^{e_H}
\]
with vanishing locus $\VanLoc \eqdef \{ \overline p \in X \mid \hh(\overline p) = 0 \}$, \cf~\cite[Def.~6.44]{orlikterao:arrangements}.
Let~$\DerR$ be the $R$-module of logarithmic vector fields, and let
\[
  \DerR(-\log\hh) \eqdef \big\{ \theta \in \DerR \mid \theta\hh \in R\hh \big\}
\]
be the module of logarithmic vector fields along~$\VanLoc$.
We have an $R$-isomorphism between such logarithmic vector fields and $W$-invariant $S$-derivations,
\begin{equation}
  \DerR(-\log\hh) \cong \DerSW, \label{eq:DerRDerSW}
\end{equation}
and $\DerR(-\log\hh)$ is a free $R$-module, \cf~\cite[Cor.~6.58]{orlikterao:arrangements}.

\medskip

Bessis showed in~\cite[Thm.~2.4]{Bes2015} that there exists a \defn{system of flat homogeneous derivations} $\{ \xi_1,\ldots,\xi_\ell \}$ of $\DerR(-\log\hh)$.
This means, its Saito matrix
\[
  M_\xi \eqdef M(\xi_\ell, \ldots, \xi_1) =
  \begin{bmatrix}
    \xi_\ell(t_1) & \cdots & \xi_\ell(t_\ell) \\
    \vdots & \ddots & \vdots \\
    \xi_1(t_1) & \cdots & \xi_1(t_\ell)
  \end{bmatrix}
\]
decomposes as
\begin{align}
  \label{eq:MV}M_\xi = t_\ell \one_\ell + \Mprime(\bbt')
\end{align}
with $\Mprime(\bbt') \in \BBC[\bbt']^{\ell\times\ell}$.
As before, we have $\rowvecXi^\tr = M_\xi \rowvect^\tr$.
Moreover, we obtain that $\hh(\bbt)$ is a monic polynomial in~$t_\ell$ with coefficients in $\BBC[\bbt']$, \ie,
\[
  \hh(\bbt) = t_\ell^\ell + a_{\ell-1}(\bbt')t_\ell^{\ell-1} + \ldots + a_{1}(\bbt')t_\ell + a_{0}(\bbt').
\]
As observed in~\cite[Lem.~3.12]{KMS2015}, such a system of flat homogeneous derivations is unique.
Following~\cite[Eqs.~(52), (53)]{KMS2015}, where this flat system is denoted by $(V_\ell,\ldots,V_1)$, we have
\[
  \weight\big( \xi_{\ell+1-j}(t_i) \big) = 1 - \weight(t_j) + \weight(t_i)  
\]
and
\[
  \xi_1 = \sum \weight(t_i) t_i \partial_{t_i} \in \DerR
\]
is the \defn{Euler vector field} mapped to the (scaled) \defn{Euler derivation}
\begin{equation}
E \eqdef \tfrac{1}{h} \sum x_i \partial_{x_i} \in \DerSW
  \label{eq:Euler}
\end{equation}
under the isomorphism in~\eqref{eq:DerRDerSW}.
As described in~\cite[Lem.~3.9]{KMS2015}, one decomposes
\begin{equation}
  \label{eq:Btilde} M_\xi = \sum \weight(t_i) t_i \tilde B^{(i)}
\end{equation}
and defines the weighted homogeneous $(\ell \times \ell)$-matrix $C(\bbt)$ such that
\begin{equation}
  \tilde B^{(i)} = \partial C \big/ \partial t_i \quad \text{and} \quad \xi_1C = M_\xi.
  \label{eq:Cmat}
\end{equation}
In this case,~\cite[Thm.~6.1]{KMS2015} yields that $t_i = C_{\ell,i}$ and thus,~$\bbt$  is a \defn{flat coordinate system} on~$X$ associated to the \emph{Okubo type differential equation}
\begin{equation}
  d\Yprime = -M_\xi^{-1}\ dC\ \Binf\ \Yprime,
  \label{eq:differentialequation}
\end{equation}
where~$B_\infty$ is the diagonal matrix
\[
  \Binf \eqdef \diag\big( \weight(t_i) - (h+1) / h \big) = \diag\big( ( d_i - h - 1 ) / h \big),
\]
and
\begin{equation*}
  \Yprime \eqdef -\Binf^{-1} \rowvecXi^\tr (x_1,\ldots,x_\ell)
          = -\Binf^{-1} M_\xi \Jxt.
\end{equation*}
Define a connection~$\nabla$ on~$\DerR$ by
\begin{equation*}
  \nabla \colvect = -M_\xi^{-1}(\bbt) dC(\bbt)(B_\infty + \one_\ell) \colvect,
\end{equation*}
where $dC = \sum \tilde B^{(i)} dt_i$ is the differential of the matrix~$C(\bbt)$ as given in~\eqref{eq:Cmat}.

\begin{proposition}
\label{prop:flatextension}
  The connection~$\nabla$ extends to a connection on~$\DerS$ which is \emph{flat}, \ie,
  \[
    \nabla \colvecx = 0.
  \]
\end{proposition}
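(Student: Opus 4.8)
\subsection*{Proof proposal}

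The plan is to verify the flatness condition~\eqref{eq:flatconnect}, $\nabla\colvecx = 0$, directly in the $\bbt$-coordinates and then to peel the resulting matrix identity down, step by step, to a commutativity statement about the matrices $\tilde B^{(i)} = \partial C/\partial t_i$. First I would make the extension precise: the derivations $\partial_{t_1},\ldots,\partial_{t_\ell}$ of $R = \BBC[\bbt]\cong S^W$ extend uniquely to derivations of the fraction field~$F$ of~$S$, so extending $\nabla$ $F$-linearly in its first argument and by the Leibniz rule in its second yields a connection on $\Der_F := F\otimes_S\DerS$. Since $\colvecx = \Jtx\colvect$ with $\Jtx\in\GL_\ell(F)$, the $\partial_{t_1},\ldots,\partial_{t_\ell}$ form an $F$-basis of $\Der_F$, so it suffices to prove $\nabla_{\partial_{t_k}}(\partial_{x_i}) = 0$ for all $i,k$; and once that is known, the Leibniz rule gives $\nabla_\delta\bigl(\sum_i f_i\partial_{x_i}\bigr) = \sum_i\delta(f_i)\partial_{x_i}\in\DerS$ for $\delta\in\DerS$, so $\nabla$ restricts to the asserted flat connection on $\DerS$. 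Writing $\partial_{x_i} = \sum_j(\Jtx)_{ij}\partial_{t_j}$, expanding $\nabla_{\partial_{t_k}}(\partial_{x_i})$ by the Leibniz rule, and recording that $\nabla$ has connection matrix $-M_\xi^{-1}\tilde B^{(k)}(\Binf+\one)$ in the direction $\partial_{t_k}$ (recall $dC = \sum_i\tilde B^{(i)}\,dt_i$), the vanishing $\nabla\colvecx = 0$ becomes equivalent to the matrix identities
\[
  \partial_{t_k}\Jtx \;=\; \Jtx\,M_\xi^{-1}\,\tilde B^{(k)}\,(\Binf+\one),\qquad k = 1,\ldots,\ell.
\]

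To attack these I would substitute $\Jtx = \Jxt^{-1} = -\Yprime^{-1}\Binf^{-1}M_\xi$, which is immediate from $\Yprime = -\Binf^{-1}M_\xi\Jxt$, and differentiate. The Okubo equation~\eqref{eq:differentialequation}, valid by~\cite[Thm.~6.1]{KMS2015}, enters in the form $\partial_{t_k}\Yprime = -M_\xi^{-1}\tilde B^{(k)}\Binf\,\Yprime$, hence $\partial_{t_k}(\Yprime^{-1}) = \Yprime^{-1}M_\xi^{-1}\tilde B^{(k)}\Binf$, so $\partial_{t_k}\Jtx$ gets expressed through $\Yprime$, $M_\xi$, $\Binf$ and $\partial_{t_k}M_\xi$. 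Substituting this back and left-multiplying by $-\Binf\Yprime$ — legitimate since $\Yprime$, $M_\xi$ and $\Binf$ are invertible over~$F$ — turns the problem into the coordinate-free identities
\[
  \Binf\,M_\xi^{-1}\,\tilde B^{(k)}\,M_\xi \;+\; \partial_{t_k}M_\xi \;=\; \tilde B^{(k)}(\Binf+\one),\qquad k = 1,\ldots,\ell.
\]

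It remains to compute $\partial_{t_k}M_\xi$ by weighted homogeneity. From $\xi_1 C = M_\xi$ in~\eqref{eq:Cmat}, with $\xi_1 = \sum_i\weight(t_i)t_i\partial_{t_i}$ the Euler field, and from $\weight\bigl((M_\xi)_{ab}\bigr) = 1 - \weight(t_a) + \weight(t_b)$, the entry $C_{ab}$ is weighted homogeneous of that same weight; applying $\partial_{t_k}$ to $\xi_1 C = M_\xi$ and using $[\partial_{t_k},\xi_1] = \weight(t_k)\partial_{t_k}$ gives $\partial_{t_k}M_\xi = \tilde B^{(k)} + \tilde B^{(k)}D - D\,\tilde B^{(k)}$, where $D := \diag\bigl(\weight(t_1),\ldots,\weight(t_\ell)\bigr) = \Binf + \tfrac{h+1}{h}\one$. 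As scalar matrices are central, the $D$-commutator equals the $\Binf$-commutator, so the previous identities collapse to $\Binf M_\xi^{-1}\tilde B^{(k)}M_\xi = \Binf\,\tilde B^{(k)}$; and since $\Binf = \diag\bigl((d_i-h-1)/h\bigr)$ is invertible — all $d_i\le d_\ell = h < h+1$ by \Cref{rem:well-generated} — this is just $[M_\xi,\tilde B^{(k)}] = 0$. Finally, by~\eqref{eq:Btilde} we have $M_\xi = \sum_j\weight(t_j)t_j\tilde B^{(j)}$, so $[M_\xi,\tilde B^{(k)}] = \sum_j\weight(t_j)t_j[\tilde B^{(j)},\tilde B^{(k)}]$, which vanishes once the $\tilde B^{(i)} = \partial C/\partial t_i$ are known to pairwise commute.

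The step I expect to be the genuine obstacle is precisely this pairwise commutativity: it is not among the facts recalled in the excerpt, but it is part of the flat structure on~$X$ constructed in~\cite{KMS2015} — the $\tilde B^{(i)}$ are the matrices of multiplication by $\partial_{t_i}$ for the commutative, associative product attached to that structure, so they commute by associativity, and the commutativity is also a consequence of the integrability of the Okubo system~\eqref{eq:differentialequation} — so I would isolate and cite the precise statement. The remaining points are routine bookkeeping: working throughout over the fraction field~$F$, \ie\ on the regular locus of $X$, so that $\Jtx$, $\Yprime$, $M_\xi$, $\Binf$ are genuinely invertible and $\partial_{t_k}$ may be applied to their entries, and carrying the weights correctly through the homogeneity argument.
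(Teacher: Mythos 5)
Your argument is correct and takes essentially the same route as the paper's proof: both rest on the Okubo equation~\eqref{eq:differentialequation}, the identity $dM_\xi = dC + [dC,\Binf]$ (which you rederive from the weighted homogeneity of~$C$, whereas the paper simply cites~\cite[Eq.~(28)]{KMS2015}), and the mutual commutativity of the matrices $\tilde B^{(i)}$ with $M_\xi$, your version being a transposed, per-coordinate reduction to $[M_\xi,\tilde B^{(k)}]=0$ of the paper's differential-form computation with $d\Yprime = dC\,\Jxt$. The commutativity you rightly single out as the one external input is exactly what the paper imports via~\cite[Eq.~(13)]{KMS2015}, so your reduction closes without a gap.
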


\begin{proof}
  Using the definition of~$\nabla$ and the Leibniz rule, we obtain
  \begin{equation}
    \begin{aligned}
      \nabla\colvect
        &= d\Jxt\ \colvecx + \Jxt\ \nabla\colvecx \\
        &= -M_\xi^{-1}\ dC\ (\Binf + \one_\ell )\ \Jxt\ \colvecx.
    \end{aligned}
    \label{eq:flatproof1}
  \end{equation}
  By~\eqref{eq:differentialequation}, we have
  \begin{equation}
    \begin{aligned}
      d\Yprime &= - \Binf^{-1}\ \big( dM_\xi\ \Jxt + M_\xi\ d\Jxt \big)\\
               &= -M_\xi^{-1}\ dC\ \Binf\ \Yprime \\
               &= dC\ \Jxt,
    \end{aligned}
    \label{eq:flatproof2}
  \end{equation}
  where all $\tilde B^{(i)}$ and $M_\xi$ mutually commute, according to~\cite[Eq.~(13)]{KMS2015}.
  Thanks to~\cite[Eq.~(28)]{KMS2015}, we have
  \[
    dM_\xi = dC + [dC, \Binf].
  \]
  The identity \eqref{eq:flatproof2} then implies
  \begin{equation}
    \begin{aligned}
      -M_\xi\ d\Jxt &= \Binf\ dC\ \Jxt + \big(dC + [dC, \Binf]\big) \Jxt \\
                    &= dC\ (\Binf + \one_\ell) \Jxt.
    \end{aligned}
    \label{eq:flatproof3}
  \end{equation}
  We finally deduce from~\eqref{eq:flatproof1} and~\eqref{eq:flatproof3} that
  \[
     \Jxt\ \nabla\colvecx = -\Big(M_\xi^{-1}\ dC\ (\Binf + \one_\ell) \Jxt + d\Jxt\Big) \colvecx = 0.
  \]
  Since~$\Jxt$ is invertible, the result follows.
\end{proof}

One further main ingredient in the proof of \Cref{thm:main1} is the following proposition, where we recall $\Mprime(\bbt') = M_\xi - t_\ell \one_\ell$ from~\eqref{eq:MV}.

\begin{proposition}
\label{prop:mainingredient}
  We have $\BBC[\bbt']$-isomorphisms
  \begin{equation*}
    \begin{aligned}
    \prim &: \DerR(-\log\hh) \longrightarrow \DerR \\
    \priminv &: \DerR \longrightarrow \DerR(-\log\hh)
    \end{aligned}\label{eq:nablainv}
  \end{equation*}
  given by
  \begin{equation}
    \begin{aligned}
      \prim\colvect    &= -M_\xi^{-1}(B_\infty + \one_\ell) \colvect \\
      \priminv\colvect &= -B^{-1}_\infty \colvecXi = -B^{-1}_\infty M_\xi \colvect.
    \end{aligned}
    \label{eq:primiso}
  \end{equation}
  Moreover, for $k > 0$ we get
  \begin{equation}
  \label{eq:computenablaDinv}
    \priminv(t_\ell^k \partial_{t_{i}}) = \tfrac{h}{(k+1)h+1-d_i}\Big( t_\ell^k \xi_{\ell+1-i} - k \sum_{j=1}^\ell \big[\Mprime(\bbt')\big]_{ij} \priminv(t_\ell^{k-1}\partial_{t_j})\Big).
  \end{equation}
\end{proposition}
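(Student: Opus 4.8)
The plan is to identify $\prim=\nabla_{\!D}$ with the covariant derivative along the primitive vector field $D=\partial_{t_\ell}$ and to exploit that $D$ annihilates $\BBC[\bbt']$. Since $D(p)=\partial_{t_\ell}(p)=0$ for every $p\in\BBC[\bbt']$, the Leibniz rule makes $\nabla_{\!D}$ a $\BBC[\bbt']$-linear operator. Contracting the defining relation $\nabla\colvect=-M_\xi^{-1}\,dC\,(\Binf+\one_\ell)\colvect$ with $D$ and using $dC=\sum_i\tilde B^{(i)}\,dt_i$ gives $\prim\colvect=-M_\xi^{-1}\,\tilde B^{(\ell)}(\Binf+\one_\ell)\colvect$, so the first point to settle is $\tilde B^{(\ell)}=\one_\ell$. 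Differentiating $\xi_1 C=M_\xi=t_\ell\one_\ell+\Mprime(\bbt')$ from~\eqref{eq:MV} and~\eqref{eq:Cmat} with respect to $t_\ell$, and using $\tilde B^{(i)}=\partial C/\partial t_i$ together with $\weight(t_\ell)=d_\ell/h=1$ (see \Cref{rem:well-generated}), yields $\tilde B^{(\ell)}+\xi_1(\tilde B^{(\ell)})=\one_\ell$; since each entry of $\tilde B^{(\ell)}$ is weighted homogeneous of weight $\weight(t_b)-\weight(t_a)$ and $1+\weight(t_b)-\weight(t_a)=(h+d_b-d_a)/h>0$ because $1\le d_a,d_b\le h$, comparing entries forces $\tilde B^{(\ell)}=\one_\ell$. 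This establishes the first formula in~\eqref{eq:primiso} (a priori with coefficients in the fraction field of $R$).

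The core of the argument is the identity
\[
  \prim(\xi_{\ell+1-i}) = -[\Binf]_{ii}\,\partial_{t_i} = \tfrac{h+1-d_i}{h}\,\partial_{t_i},\qquad 1\le i\le\ell .
\]
To prove it, expand $\xi_{\ell+1-i}=\sum_b[M_\xi]_{ib}\,\partial_{t_b}=t_\ell\,\partial_{t_i}+\sum_b[\Mprime(\bbt')]_{ib}\,\partial_{t_b}$ via~\eqref{eq:MV}, apply the Leibniz rule, and observe $D([M_\xi]_{ib})=\partial_{t_\ell}\!\big(t_\ell\delta_{ib}+[\Mprime]_{ib}\big)=\delta_{ib}$ as $\Mprime(\bbt')$ is free of $t_\ell$; substituting $\nabla_{\!D}\partial_{t_b}=\big[-M_\xi^{-1}(\Binf+\one_\ell)\colvect\big]_b$ from paragraph one, the product $M_\xi\cdot\big(-M_\xi^{-1}\big)(\Binf+\one_\ell)=-(\Binf+\one_\ell)$ is diagonal, so the whole expression collapses to the stated multiple of $\partial_{t_i}$; the coefficient is nonzero since $d_i\le d_\ell=h$. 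In particular $\prim$ sends each $\xi_j$ to an $R$-multiple of a $\partial_{t_i}$, and combined with $\BBC[\bbt']$-linearity and the Leibniz computation below this shows $\prim$ maps $\DerR(-\log\hh)$ into $\DerR$; moreover $\priminv(\partial_{t_i})=\tfrac{h}{h+1-d_i}\,\xi_{\ell+1-i}$, which equals $\big[-\Binf^{-1}M_\xi\colvect\big]_i=\big[-\Binf^{-1}\colvecXi\big]_i$ since $\Binf$ is diagonal and the $i$-th entry of $\colvecXi$ is $\xi_{\ell+1-i}$; this is the second formula in~\eqref{eq:primiso}.

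Next I would compute $\prim$ on the full $\BBC[\bbt']$-basis $\{t_\ell^k\xi_j\}_{k\ge0,\,1\le j\le\ell}$ of $\DerR(-\log\hh)$: the Leibniz rule and the identity above give
\[
  \prim(t_\ell^k\xi_{\ell+1-i})=k\,t_\ell^{k-1}\xi_{\ell+1-i}+t_\ell^k\,\prim(\xi_{\ell+1-i})=\tfrac{(k+1)h+1-d_i}{h}\,t_\ell^k\partial_{t_i}+k\sum_{j=1}^\ell[\Mprime(\bbt')]_{ij}\,t_\ell^{k-1}\partial_{t_j}.
\]
Since $\DerR(-\log\hh)=\bigoplus_{k,j}\BBC[\bbt']\,t_\ell^k\xi_j$ and $\DerR=\bigoplus_{k,i}\BBC[\bbt']\,t_\ell^k\partial_{t_i}$ as free $\BBC[\bbt']$-modules, this identity shows $\prim$ is triangular for the filtration by degree in $t_\ell$, with associated graded map sending $t_\ell^k\xi_{\ell+1-i}$ to $\tfrac{(k+1)h+1-d_i}{h}\,t_\ell^k\partial_{t_i}$; as $(k+1)h+1-d_i\ge kh+1>0$ this graded map is bijective, hence so is $\prim\colon\DerR(-\log\hh)\to\DerR$. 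Setting $\priminv\eqdef\prim^{-1}$, which is again $\BBC[\bbt']$-linear, applying $\priminv$ to the displayed identity and solving for $\priminv(t_\ell^k\partial_{t_i})$ yields precisely the recursion~\eqref{eq:computenablaDinv} (its $k=0$ instance being the formula $\priminv(\partial_{t_i})=\tfrac{h}{h+1-d_i}\,\xi_{\ell+1-i}$ of paragraph two). Alternatively one may take~\eqref{eq:computenablaDinv} as the definition of $\priminv$ on $\{t_\ell^k\partial_{t_i}\}$, check inductively that it lands in $\DerR(-\log\hh)$, and verify $\prim\priminv=\priminv\prim=\mathrm{id}$ by induction on $k$ from the displayed formula.

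I expect the only genuinely non-formal ingredient to be the normalization $\tilde B^{(\ell)}=dC(D)=\one_\ell$, which puts the connection form along the primitive direction into the shape used in~\eqref{eq:primiso}; everything after that is bookkeeping with the reversed indexing of $M_\xi=M(\xi_\ell,\dots,\xi_1)$ and $\colvecXi$, together with the single cancellation $M_\xi M_\xi^{-1}=\one_\ell$ that makes $\prim(\xi_{\ell+1-i})$ land on a scalar multiple of $\partial_{t_i}$.
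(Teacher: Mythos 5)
Your proposal is correct and follows essentially the same route as the paper: establish $\BBC[\bbt']$-linearity of $\nabla_{\!D}$, use $\tilde B^{(\ell)}=\one_\ell$ to get the first formula in~\eqref{eq:primiso}, compute $\prim\colvecXi=-\Binf\colvect$ via the Leibniz rule and $\partial M_\xi/\partial t_\ell=\one_\ell$ for the second, and then evaluate $\prim$ on $t_\ell^k\xi_{\ell+1-i}$ to deduce bijectivity (from $(k+1)h+1-d_i>0$) and the recursion~\eqref{eq:computenablaDinv}. The only difference is that you spell out the weighted-homogeneity argument for $\tilde B^{(\ell)}=\one_\ell$ and the $t_\ell$-degree filtration argument for invertibility, which the paper leaves implicit; both are correct.
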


\begin{proof}
  One first directly calculates that~$\nabla_D$ is $\BBC[\bbt']$-linear.
  The first equation in~\eqref{eq:primiso} is a direct consequence of the fact that~$\tilde B^{(\ell)} = \one_\ell$ which follows
  from ~\eqref{eq:Btilde} in light of~\eqref{eq:MV}.
  On the other hand, we have
  \begin{equation}
  \label{eq:nablaDdescription}
    \begin{aligned}
    \nabla_D \colvecXi
      &= \nabla_D\left( M_\xi \colvect\right) \\
      &= \frac{\partial M_\xi}{\partial t_\ell} \colvect + M_\xi \nabla_D \colvect \\
      &= \colvect - (\Binf + \one_\ell) \colvect \\
      &= -\Binf \colvect,
    \end{aligned}
  \end{equation}
  where we used that $\frac{\partial M_\xi}{\partial t_\ell} = \one_\ell$, see again~\eqref{eq:MV}.
  Moreover,
  \begin{equation}
  \label{eq:nableDdescription2}
    \begin{aligned}
    \prim(t_\ell^k \xi_{\ell+1-i}) &= k t_\ell^{k-1} \xi_{\ell+1-i} +t_\ell^k \prim(\xi_{\ell+1-i}) \\
                     &= k t_\ell^{k-1}\Big( t_\ell \partial_{t_i} + \sum_{j=1}^\ell \big[\Mprime(\bbt')\big]_{ij} \partial_{t_j} \Big) + \tfrac{h+1-d_i}{h} t_\ell^k \partial_{t_i} \\
                     &= \tfrac{(k+1)h+1-d_i}{h} t_\ell^k \partial_{t_i} + k \sum_{j=1}^\ell \big[\Mprime(\bbt')\big]_{ij} t_\ell^{k-1}\partial_{t_j},
    \end{aligned}
  \end{equation}
  implying that $\nabla_D$ is indeed bijective, where we used that $(k+1)h+1-d_i > 0$, and thus a $\BBC[\bbt']$-isomorphism.
  Applying now $\nabla_D^{-1}$ to~\eqref{eq:nablaDdescription} yields the second equation in~\eqref{eq:primiso}, and rewriting~\eqref{eq:nableDdescription2} as
  \begin{equation*}
    t_\ell^k \partial_{t_i} = \tfrac{h}{(k+1)h+1-d_i} \big( \prim(t_\ell^k \xi_{\ell+1-i}) - k \sum_{j=1}^\ell \big[\Mprime(\bbt')\big]_{ij} t_\ell^{k-1}\partial_{t_j} \big),
  \end{equation*}
  and applying $\nabla_D^{-1}$ yields~\eqref{eq:computenablaDinv}.
\end{proof}

Using \Cref{prop:mainingredient}, we obtain the analogue of the \emph{Hodge filtration} for reflection arrangements of well-generated unitary reflection groups introduced by Saito for Coxeter arrangements in~\cite{Sai1993}, compare also~\cite{terao:hodge}.
Let $\Hodgedec_0$ be the $\BBC[\bbt']$-submodule of $\Der_R$ generated by $\partial_{t_1},\ldots,\partial_{t_\ell}$ and let $\Hodgedec_k \eqdef \nabla_D^{-k}(\Hodgedec_0)$ for $k \in \BBN$.
In particular, we see by \Cref{prop:mainingredient} that $\Hodgedec_1$ coincides with the $\BBC[\bbt']$-submodule generated by $\xi_1,\ldots,\xi_\ell$.
Then, using~\eqref{eq:computenablaDinv}, we confirm that
\begin{equation*}
  \Der_R = \bigoplus_{k=0}^\infty \Hodgedec_k \quad\text{and}\quad \Hodgedec_0 \oplus \cdots \oplus \Hodgedec_k = \Hodgedec_0 \oplus t_\ell\Hodgedec_0 \oplus \cdots \oplus t_\ell^k\Hodgedec_0.
\end{equation*}
Note that $\Hodgedec_k \neq t_\ell^k\Hodgedec_0$  in general. However, we obtain the (increasing) \defn{Hodge filtration} of $\Der_R$ defined by
\begin{equation*}
  \Hodgefil_k \eqdef \bigoplus_{i=0}^k \Hodgedec_i = \bigoplus_{i=0}^k t_\ell^i\Hodgedec_0.
\end{equation*}
From this filtration, we can now derive the \emph{universality} of $\priminvm(E)$ for the Euler derivation~$E$ defined in~\eqref{eq:Euler}.

\begin{proposition}
\label{prop:univerality}
  Let $\theta_1,\ldots,\theta_\ell \in \Der_S$ be linearly independent over~$S$.
  Then
  \[
    \nabla_{\theta_1}\prim^{-m}(E),\ldots,\nabla_{\theta_\ell}\prim^{-m}(E)
  \]
  are linearly independent over~$S$.
\end{proposition}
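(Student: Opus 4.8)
The statement is about the derivations $\nabla_{\theta_i} \priminv^{\,m}(E)$ obtained by differentiating the single element $\priminvm(E) \in \DerR$ (pushed forward to $\DerS$ via the isomorphism~\eqref{eq:DerRDerSW}) along a family of $S$-linearly independent derivations. The plan is to reduce linear independence of the images to a statement about the matrix whose rows record how $\priminvm(E)$ sits inside the Hodge filtration, and to use the flatness of $\nabla$ together with the basis provided by the flat system $\{\xi_1,\ldots,\xi_\ell\}$.

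\medskip

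First I would use the Hodge filtration decomposition $\Der_R = \bigoplus_{k\ge 0}\Hodgedec_k$ established just before the statement, together with the identity $\Hodgedec_0\oplus\cdots\oplus\Hodgedec_m = \Hodgedec_0\oplus t_\ell\Hodgedec_0\oplus\cdots\oplus t_\ell^m\Hodgedec_0$, to write $\priminvm(E) \in \Hodgedec_m$ in the form $\priminvm(E) = \sum_{j=0}^{m} t_\ell^{\,j} \eta_j$ with each $\eta_j \in \Hodgedec_0$, i.e. $\eta_j = \sum_i c_{ij}(\bbt')\,\partial_{t_i}$ for scalars $c_{ij}\in\BBC[\bbt']$. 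The key point is that the top coefficient is nonzero: since $\priminv = \nabla_D^{-1}$ raises the Hodge degree by exactly one and sends the Euler vector field $\xi_1$ into $\Hodgedec_1$, iterating~\eqref{eq:computenablaDinv} shows the $t_\ell^m$-component of $\priminvm(E)$ is, up to a nonzero constant, $\sum_i (\text{const})\,t_\ell^m \partial_{t_i}$ built from $E = \xi_1$; more precisely one tracks that the leading term of $\priminvm(\xi_1)$ is a nonzero $\BBC$-multiple of $t_\ell^m$ times a fixed nonvanishing element of $\Hodgedec_0$. (This uses that all the denominators $(k+1)h+1-d_i$ appearing in~\eqref{eq:computenablaDinv} are strictly positive, which is recorded in the proof of \Cref{prop:mainingredient}.)

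\medskip

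Next, using flatness of $\nabla$ in the form~\eqref{eq:flatconnect}, for any $\delta\in\DerS$ we have $\nabla_\delta\big(\priminvm(E)\big) = \sum_i \delta\big(p_i\big)\,\partial_{x_i}$, where $p_i$ are the coefficients of $\priminvm(E)$ written in the $\partial_{x_i}$-basis of $\DerS$. Equivalently, after passing through $\Jxt$, the Saito-type matrix of $\nabla_{\theta_1}\priminvm(E),\ldots,\nabla_{\theta_\ell}\priminvm(E)$ factors as a product of the Jacobian $\big[\theta_s(x_r)\big]_{s,r}$ (which is invertible over $S$ precisely because the $\theta_s$ are $S$-linearly independent, by \Cref{thm:zieglersaito}) with the "Jacobian of $\priminvm(E)$", namely the matrix $\big[\partial p_i/\partial x_r\big]$. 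So the whole problem reduces to showing that this latter matrix is nonsingular over the fraction field $F$ of $S$, i.e. that the functions $p_1,\ldots,p_\ell$ are algebraically independent, equivalently that $\priminvm(E)$, viewed as a map $V\to V$ (or $V\dashrightarrow V$), has nonzero Jacobian determinant.

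\medskip

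The main obstacle, and the heart of the argument, will be this last nonvanishing. Here I would exploit the explicit leading-term computation from the first step: pass to the flat coordinates $\bbt$ on $X = V/W$, where $\priminvm(E) = \sum_{j\le m} t_\ell^{\,j}\eta_j$ with nonzero top coefficient $\eta_m\in\Hodgedec_0$ (constant in $t_\ell$). One computes the Jacobian of the coefficient vector of $\priminvm(E)$ with respect to $\bbt$ and shows its determinant is a nonzero polynomial — essentially because the $t_\ell$-dependence is a genuine polynomial of degree $m$ whose leading and subleading coefficients, governed by the $\BBC[\bbt']$-isomorphism structure of $\priminv$ in \Cref{prop:mainingredient}, cannot all degenerate simultaneously. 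Then transporting back through the invertible Jacobian $\Jtx$ (whose determinant is $\doteq\prod_H\alpha_H^{e_H-1}\ne 0$) gives nonvanishing over $F$, and hence $S$-linear independence. An alternative, possibly cleaner route for this step: invoke a degree/homogeneity count — by~\eqref{eq:pdegnabla} and the known $\pdeg$ of $D$, $E$, and the $\xi_i$, the derivations $\nabla_{\theta_i}\priminvm(E)$ have prescribed polynomial degrees summing to the correct total, and then one only needs \emph{one} specialization (e.g. the choice $\theta_i=\partial_{x_i}$, reducing to the Jacobian of $\priminvm(E)$ itself) where independence is visible from the flat-coordinate leading term, together with the factorization above to propagate it to all $S$-linearly independent families.
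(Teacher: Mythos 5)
Your overall skeleton is right and matches the paper's first move: since the Saito matrix $\big[\theta_s(x_r)\big]$ of an $S$-linearly independent family has nonzero determinant, everything reduces to the single family $\partial_{t_1},\ldots,\partial_{t_\ell}$ (the paper phrases this via $\det \Jtx \neq 0$), i.e.\ to showing that $\nabla_{\partial_{t_1}}\priminvm(E),\ldots,\nabla_{\partial_{t_\ell}}\priminvm(E)$ are linearly independent over $R$. The gap is in how you propose to prove this reduced statement. You base it on the nonvanishing of the top $t_\ell$-coefficient of the \emph{single} element $\priminvm(E)$ written as $\sum_j t_\ell^j\eta_j$ with $\eta_j\in\Hodgedec_0$. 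That is far too weak: the gradient of one function being nonzero says nothing about the nonsingularity of an $\ell\times\ell$ Jacobian, and your fallback phrase that the leading and subleading coefficients ``cannot all degenerate simultaneously'' is an assertion, not an argument. What you actually need is a simultaneous statement about all $\ell$ derivations $\nabla_{\partial_{t_i}}\priminvm(E)=\tfrac1h\priminvm(\partial_{t_i})$ (note the commutation $\nabla_{\partial_{t_i}}\priminvm=\priminvm\nabla_{\partial_{t_i}}$, which you never invoke but which is essential here): namely that their images in the top graded piece of the $t_\ell$-filtration are linearly independent over $\BBC[\bbt']$. A second, smaller slip: since $E$ corresponds to $\xi_1\in\Hodgedec_1$, one has $\priminvm(E)\in\Hodgedec_{m+1}$, not $\Hodgedec_m$; it is the derivatives $\nabla_{\partial_{t_i}}\priminvm(E)$ that lie in $\Hodgedec_m$. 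Also, if by ``Jacobian of the coefficient vector of $\priminvm(E)$ with respect to $\bbt$'' you mean the coefficients in the $\partial_{t_i}$-basis, that matrix is not the Saito matrix of the $\nabla_{\partial_{t_j}}\priminvm(E)$, because $\nabla$ is flat in the $x$-coordinates, not the $t$-coordinates, so $\nabla_{\partial_{t_j}}\partial_{t_i}\neq 0$ in general.

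The paper closes exactly this hole with the Hodge decomposition: given a dependence $\sum a_i\,\nabla_{\partial_{t_i}}\priminvm(E)=0$ with $a_i=\sum_j c_{ij}t_\ell^j$ of top degree $d$, the $t_\ell^{d+m}$-coefficient forces $\sum_i c_{id}\,\nabla_{\partial_{t_i}}\priminvm(E)$ to lie in $\Hodgefil_{m-1}\cap\Hodgedec_m=0$, and applying the $\BBC[\bbt']$-isomorphism $\nabla_D^{m}$ turns this into $\sum_i c_{id}\partial_{t_i}=0$, whence $c_{id}=0$ for all $i$ --- a contradiction. If you prefer your leading-term route, it can be repaired, but you must prove the right leading-term statement: iterating~\eqref{eq:computenablaDinv} shows that the $t_\ell^m$-coefficient of $\priminvm(\partial_{t_i})$ is $c_i\,\partial_{t_i}$ with $c_i=\prod_{k=0}^{m-1}\tfrac{h}{(k+1)h+1-d_i}\neq 0$, i.e.\ the leading coefficient matrix is an invertible diagonal matrix over $\BBC[\bbt']$; that diagonal structure, not the nonvanishing of a single top coefficient, is what kills the hypothetical dependence. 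As written, your proposal does not contain this step.
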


\begin{proof}
  It is well-known that $\det \Jtx \doteq \prod_{H \in \CA} \alpha_H^{e_H-1} \neq 0$ implies that it is sufficient to prove linear independence of $\nabla_{\partial_{t_1}}\prim^{-m}(E),\ldots,\nabla_{\partial_{t_\ell}}\prim^{-m}(E)$ over~$R$.
  For the sake of contradiction, assume that
  \begin{equation}
    \sum_{i=1}^\ell a_i \nabla_{\partial_{t_i}}\prim^{-m}(E) = 0
    \label{eq:lineardep}
  \end{equation}
  with $a_i \in R = \BBC[\bbt]$.
  Write $a_i  = c_{i0} + c_{i1}t_\ell + c_{i2}t_\ell^2 + \cdots + c_{id}t_\ell^d$ for $c_{ij} \in \BBC[\bbt']$ where~$d$ is the maximal degree of $t_\ell$ among all $a_i$'s.
  In particular, $c_{id} \neq 0$ for some~$i$.
  We easily find that $\nabla_{\partial_{t_i}}(E) = \tfrac{1}{h}\partial_{t_i} \in \Hodgedec_0$ and thus $\nabla_{\partial_{t_i}}\priminvm(E) = \priminvm\nabla_{\partial_{t_i}}(E) \in \Hodgedec_m$.
  Hence, we also have $\sum_{i=1}^\ell c_{id} \nabla_{\partial_{t_i}}\priminvm(E) \in \Hodgedec_m$.
  On the other hand, we obtain from~\eqref{eq:lineardep} that the coefficient of the degree~$d+m$ term with respect to $t_\ell$ vanishes, and therefore, $\sum_{i=1}^\ell c_{id} \nabla_{\partial_{t_i}}\priminvm(E) \in \Hodgefil_{m-1}$, implying that this sum also vanishes.
  Applying $\nabla_D^{m}$, we finally have
  \[
    0 = \nabla_D^{m}\Big(\sum_{i=1}^\ell c_{id} \nabla_{\partial_{t_i}}\priminvm(E)\Big) = \sum_{i=1}^\ell c_{id} \nabla_{\partial_{t_i}}(E) = \frac{1}{h} \sum_{i=1}^\ell c_{id} \partial_{t_i}.
  \]
  Therefore, $c_{1d} = \cdots = c_{\ell d} = 0$, contradicting the fact that $c_{id} \neq 0$ for some~$i$.
\end{proof}
  
After having established the universality of $\priminvm(E)$, the following is our generalization of \cite[Thm.~7]{yoshinaga:multicoxeter} to the well-generated setting.

\begin{theorem}
\label{thm:main1strong}
  Let~$W$ be an irreducible, well-generated unitary reflection group with reflection arrangement $\CA$.
  Let $\omega : \CA \to \BBN$ given by $\omega(H) = e_H$, and let $m \in \BBN$.
  Suppose that $\mu : \CA \rightarrow \{0,1\}$ such that $\FD(\CA,\mu)$ is free with homogeneous basis $\theta_1,\ldots,\theta_\ell$.
  Then $\FD(\CA,m\omega+\mu)$ is free with basis
  \[
    \nabla_{\theta_1}\prim^{-m}(E),\ldots,\nabla_{\theta_\ell}\prim^{-m}(E).
  \]
  Moreover,
  \[
    \exp\big(\CA,m\omega+\mu\big) = \big\{ mh + \pdeg(\theta_1), \ldots, mh + \pdeg(\theta_\ell) \big\}.
  \]
\end{theorem}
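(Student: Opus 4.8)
The strategy is to show that $\eta_i \eqdef \nabla_{\theta_i}\prim^{-m}(E)$, for $i=1,\dots,\ell$, form an $S$-basis of $\FD(\CA,m\omega+\mu)$ by verifying the third condition in Ziegler's criterion, \Cref{thm:zieglersaito}: the $\eta_i$ are homogeneous elements of $\FD(\CA,m\omega+\mu)$, they are linearly independent over $S$, and $\sum_i\pdeg(\eta_i)=|m\omega+\mu|$. Three of these are bookkeeping. Since $D$, $E$ and the $\theta_i$ are homogeneous, iterating \eqref{eq:pdegnabla} with $\pdeg(D)=-(h-1)$ from \eqref{eq:pdegD} and $\pdeg(E)=1$ from \eqref{eq:Euler} shows $\prim^{-m}(E)$ is homogeneous of polynomial degree $mh+1$, hence each $\eta_i$ is homogeneous of polynomial degree $mh+\pdeg(\theta_i)$; this yields the asserted exponents once freeness is known. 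Linear independence of the $\eta_i$ over $S$ is exactly \Cref{prop:univerality} applied to the $S$-basis $\theta_1,\dots,\theta_\ell$ of $\FD(\CA,\mu)$. Finally, freeness of $\FD(\CA,\mu)$ gives $\sum_i\pdeg(\theta_i)=|\mu|$, so $\sum_i\pdeg(\eta_i)=\ell m h+|\mu|=\sum_{H\in\CA}(m e_H+\mu(H))=|m\omega+\mu|$, using $\sum_{H}e_H=\ell h$ from \Cref{defn:coxeternr}.

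It then remains to prove the containment $\eta_i\in\FD(\CA,m\omega+\mu)$, i.e.\ $\eta_i(\alpha_H)\in S\alpha_H^{m e_H+\mu(H)}$ for every $H\in\CA$. By \eqref{eq:defnabla} we have $\eta_i(\alpha_H)=\theta_i\big(\prim^{-m}(E)(\alpha_H)\big)$, and I claim it suffices to establish the single statement $(\star)$:
\[
  \prim^{-m}(E)(\alpha_H)\in S\,\alpha_H^{m e_H+1}\qquad\text{for every }H\in\CA.
\]
Indeed, granting $(\star)$, write $\prim^{-m}(E)(\alpha_H)=g\,\alpha_H^{m e_H+1}$ with $g\in S$; the Leibniz rule gives $\eta_i(\alpha_H)=\theta_i(g)\,\alpha_H^{m e_H+1}+(m e_H+1)\,g\,\alpha_H^{m e_H}\,\theta_i(\alpha_H)$. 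As $\mu(H)\le 1$ the first summand lies in $S\alpha_H^{m e_H+\mu(H)}$, and since $\theta_i\in\FD(\CA,\mu)$ we have $\theta_i(\alpha_H)\in S\alpha_H^{\mu(H)}$, so the second summand lies there too. Hence $(\star)$ forces $\eta_i\in\FD(\CA,m\omega+\mu)$, and \Cref{thm:zieglersaito} finishes the proof.

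The proof of $(\star)$ is the crux, and is where the primitive vector field and the flat connection genuinely enter, taking over the role that Saito's flat structure plays in Yoshinaga's treatment of the Coxeter case in~\cite{yoshinaga:multicoxeter}. I would argue by induction on $m$. For $m=0$, $E(\alpha_H)=\tfrac1h\alpha_H$ gives the base case. For the inductive step, observe first that $E$ corresponds under \eqref{eq:DerRDerSW} to an element of $\DerR(-\log\hh)\subseteq\DerR$, and by \Cref{prop:mainingredient} $\priminv$ sends $\DerR$ into $\DerR(-\log\hh)\cong\DerSW$; thus every $\prim^{-k}(E)$ is an honest $W$-invariant polynomial derivation, and the inductive step reduces to the local claim: for $\eta\in\DerSW$ and $H\in\CA$, if $\eta(\alpha_H)\in S\alpha_H^{k}$ then $\priminv(\eta)(\alpha_H)\in S\alpha_H^{k+e_H}$. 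To prove this one localises near a generic point of $H$: there the pointwise stabiliser $W_H$ is cyclic of order $e\eqdef e_H$, generated by a reflection having $z\eqdef\alpha_H$ as an eigenvector with a primitive $e$-th root of unity eigenvalue and fixing a transverse set of linear coordinates $y_2,\dots,y_\ell$, so that the local ring of $W$-invariants is $\BBC[z^e,y_2,\dots,y_\ell]$. The $W$-invariance of $\eta$ forces every $z$-exponent occurring in $\eta(\alpha_H)$ to be $\equiv 1 \pmod e$, while expressing $\priminv=\nabla_D^{-1}$ via \eqref{eq:primiso} and the recursion \eqref{eq:computenablaDinv} and passing from the invariant coordinate $s=z^e$ back to $z$ produces exactly one additional factor $z^e$, the congruence preventing any loss in the exponent.

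The step I expect to be the main obstacle is carrying out this local bookkeeping rigorously, that is, matching the construction of the flat derivations $\xi_j$ and of the matrices $M_\xi$ and $C$ of~\cite{KMS2015} against the local geometry of the quotient map $V\to V/W$ along the discriminant; this is also precisely where well-generatedness is used, through the existence and uniqueness of the flat system (Bessis~\cite{Bes2015} and~\cite{KMS2015}). Once $(\star)$ is in hand, the argument assembles as indicated: homogeneity and the degree count are formal, linear independence is \Cref{prop:univerality}, the module containment follows from $(\star)$ by the Leibniz computation above, and \Cref{thm:zieglersaito} then upgrades these to the assertion that $\nabla_{\theta_1}\prim^{-m}(E),\dots,\nabla_{\theta_\ell}\prim^{-m}(E)$ is a basis of $\FD(\CA,m\omega+\mu)$ with the stated exponents.
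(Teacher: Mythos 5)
Your reduction of the theorem to the single divisibility statement $(\star)$, and the surrounding assembly --- homogeneity and the degree count via \eqref{eq:pdegnabla} and \eqref{eq:pdegD}, linear independence via \Cref{prop:univerality}, the Leibniz computation deducing $\nabla_{\theta_i}\priminvm(E)\in\FD(\CA,m\omega+\mu)$ from $(\star)$, and the conclusion via \Cref{thm:zieglersaito}\eqref{eq:zieglersaito3} --- coincide exactly with the paper's proof and are correct. The gap is where you yourself flag it: the proof of $(\star)$. Your local model at a generic point of $H$ (cyclic stabilizer, $z$-exponents of the semi-invariant $\eta(\alpha_H)$ congruent to $1$ modulo $e_H$) explains why no partial gain can occur, but it does not establish the inequality you actually need, namely that each application of $\priminv$ raises the order of vanishing along $H$ by \emph{at least} $e_H$. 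For that one must know that the primitive vector field $D=\partial_{t_\ell}$ is transverse to the branch $H$ of the discriminant at its generic points --- equivalently, in coordinates with $\alpha_H=x_\ell$, that the maximal minor $\det\big(\partial t_j/\partial x_i\big)_{1\le i,j\le \ell-1}$ is not divisible by $\alpha_H$. Without this, $D$ applied to a function exactly divisible by $\alpha_H^{\,j}$ could drop the order by strictly less than $e_H$, and then from $\prim(\delta)\alpha_H\in\alpha_H^{k}S$ one could not conclude $\delta\alpha_H\in\alpha_H^{k+e_H}S$ for $\delta=\priminv(\eta)$; your phrase ``produces exactly one additional factor $z^e$'' asserts precisely this unproved transversality.

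The paper closes this gap concretely: writing $\prim(\delta)\alpha_H=D(\delta\alpha_H)$ via \eqref{eq:defnabla} and the determinant expression for $D$, the product rule together with $\det\Jxt\doteq\prod_{H'\in\CA}\alpha_{H'}^{-(e_{H'}-1)}$ gives the easy direction (divisibility by $\alpha_H^{k}$ when $\delta\alpha_H$ is divisible by $\alpha_H^{k+e_H}$), while the direction you need is obtained by showing the above minor is nonzero modulo $\alpha_H$, using the regular-sequence argument of \cite[Lem.~6.41]{orlikterao:arrangements} applied to $(t_1(\bbx),\ldots,t_{\ell-1}(\bbx),x_\ell)$. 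So your outline correctly locates the crux of the argument, but as written the crux is asserted rather than proved; supplying this Jacobian-minor (transversality) statement is what is missing.
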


Armed with \Cref{thm:main1strong}, we derive our first main theorem.

\begin{proof}[Proof of \Cref{thm:main1}]
  One obtains the two statements in the theorem from the special cases in \Cref{thm:main1strong} with
    $\mu \equiv 0$ and $\mu \equiv 1$.
    Freeness in the first case is trivial, and is due to Terao~\cite{terao:freereflections} in the second.
\end{proof}

\begin{proof}[Proof of \Cref{thm:main1strong}]
  Let $\delta \in \DerS$ and $\alpha = \alpha_H$ with $H \in \CA$.
  We first show that, for any~$m \in \BBN$,
  \begin{equation}
    \prim(\delta) \alpha \in \alpha^m S \quad \Longleftrightarrow \quad \delta\alpha \in \alpha^{m+e_H} S. \label{eq:lifting}
  \end{equation}
  For the reverse implication, suppose that $\delta\alpha = \alpha_H^{k+e_H} f$ for some~$f \in S$ and $k \in \BBN$.
  We then obtain from~\eqref{eq:defnabla} that
  \begin{equation*}
    \prim(\delta)\alpha = D(\delta\alpha) 
                        \doteq \det \Jxt
                              \begin{vmatrix}
                                \frac{\partial t_1}{\partial x_1} & \cdots & \frac{\partial t_{\ell-1}}{\partial x_1} & \frac{\partial \alpha^{k+e_H}f}{\partial x_1} \\
                                \vdots & \ddots & \vdots & \vdots \\
                                \frac{\partial t_1}{\partial x_\ell} & \cdots & \frac{\partial t_{\ell-1}}{\partial x_\ell} & \frac{\partial \alpha^{k+e_H}f}{\partial x_\ell}
                              \end{vmatrix}.
  \end{equation*}
  It now follows from the product rule for derivations that $\prim(\delta)\alpha$ is divisible by $\alpha^{k}$.

  For the forward implication, assume that~$k$ is maximal such that $\delta\alpha = \alpha^{k+e_H} f$.
  We show that in this case, $\prim(\delta) \alpha \notin \alpha^{k+1} S$.
  We may assume, after a possible change of basis, that $\alpha = x_\ell$.
  Since $\det \Jxt = \det \Jtx^{-1} = \big(\prod_{H \in \CA} \alpha_H^{e_H-1} \big)^{-1}$, 
  we have to show that the maximal minor
  \begin{equation*}
    \begin{vmatrix}
      \frac{\partial t_1}{\partial x_1} & \cdots & \frac{\partial t_{\ell-1}}{\partial x_1} \\
      \vdots & \ddots & \vdots \\
      \frac{\partial t_1}{\partial x_{\ell-1}} & \cdots & \frac{\partial t_{\ell-1}}{\partial x_{\ell-1}}
    \end{vmatrix}
    \label{eq:partialmat}
  \end{equation*}
  is not divisible by~$\alpha$.
  This follows from a variant of the argument in the proof of~\cite[Lem.~6.41]{orlikterao:arrangements}.
  Arguing as in \loccit, the sequence $(h_1,\ldots,h_\ell) = (t_1(\bbx), \ldots, t_{\ell-1}(\bbx), x_\ell)$ is regular.
  Because the considered determinant equals
  \begin{equation*}
    \begin{vmatrix}
      \frac{\partial t_1}{\partial x_1} & \cdots & \frac{\partial t_{\ell-1}}{\partial x_1} & \frac{\partial x_\ell}{\partial x_1} \\
      \vdots & \ddots & \vdots & \vdots \\
      \frac{\partial t_1}{\partial x_{\ell-1}} & \cdots & \frac{\partial t_{\ell-1}}{\partial x_{\ell-1}} & \frac{\partial x_\ell}{\partial x_{\ell-1}}\\[10pt]
      \frac{\partial t_1}{\partial x_{\ell}} & \cdots & \frac{\partial t_{\ell-1}}{\partial x_{\ell}} & \frac{\partial x_\ell}{\partial x_\ell}\\
    \end{vmatrix},
  \end{equation*}
  applying \loccit\ directly shows that this determinant does not belong to the ideal generated by $(t_1(\bbx),\ldots,t_{\ell-1}(\bbx),x_\ell)$.
  In particular, the determinant is not divisible by $x_\ell = \alpha$, as desired.

  \medskip

  Next, observe that~\eqref{eq:lifting} and \Cref{prop:mainingredient} immediately imply
  \[
    \delta \alpha \in \alpha^k S \quad \Longleftrightarrow \quad \priminv(\delta)\alpha \in \alpha^{k+e_H} S
  \]
  for $\delta \in \DerR$, forcing $\priminvm(E)\alpha = \alpha^{me_H+1}f$ for some $f \in S$.
  Thus, applying $\nabla_\theta$ for $\theta \in \FD(\CA,\mu)$ to both sides and using~\eqref{eq:defnabla} entails
  \[
    \nabla_\theta\priminvm(E)\alpha = \alpha^{me_H}\big( (me_H + 1)(\theta\alpha)f + \alpha(\theta(f) \big).
  \]
  As $\theta\alpha$ is divisible by $\alpha^{\mu(H)}$ and $0 \leq \mu(H) \leq 1$, we obtain that $\nabla_\theta\priminvm(E)\alpha$ is divisible by $\alpha^{me_H+\mu(H)}$, implying that 
  \[
    \nabla_\theta\priminvm(E) \in \FD(\CA,m\omega+\mu).
  \]
  For ~$\theta \in \FD(\CA,\mu)$ homogeneous, we obtain from~\eqref{eq:pdegnabla} and~\eqref{eq:pdegD} that $\nabla_{\theta}\priminvm(E)$ is homogeneous as well with 
  \[
    \pdeg\big(\nabla_{\theta}\priminvm(E)\big) = mh + \pdeg\theta.
  \]
  Let now $\theta_1,\ldots,\theta_\ell$ be the given homogeneous basis of $\FD(\CA,\mu)$.
  Then, since $\sum \pdeg(\theta_i) = |\mu|$, we immediately get
  \[
    \sum_{i=1}^\ell \pdeg\big(\nabla_{\theta_i}\priminvm(E)\big) = mh\ell + |\mu| = | m\omega + \mu|.
  \]
  The statement then follows from the universality of $\priminvm(E)$ in \Cref{prop:univerality} using \Cref{thm:zieglersaito}\eqref{eq:zieglersaito3}.
\end{proof}

\subsection{An example}

We finish this section with a detailed example of the computation of the basis for $\FD(\CA(W),\omega)$ with $W = G(3,1,2)$.
In this cases, the degrees are
\[
d_1 = 3, \quad d_2 = h = 6.
\]

We refer to~\cite[Rem.~6.2]{KMS2015} for a general strategy how to compute a flat system of invariants from the \emph{potential vector field} corresponding to the Okubo type differential equation~\eqref{eq:differentialequation} as defined in~\cite[Def~4.2]{KMS2015}.
Such have been computed in many types in~\cite{AL2016}, see also~\cite{KMS20151}.

\medskip

Given such a potential vector field $\vec{g} = (g_1(\bbt),\ldots,g_\ell(\bbt))$ and a flat system of invariants $\InvSpecial_1(\bbx),\ldots,\InvSpecial_\ell(\bbx)$, the general strategy is as follows:
\begin{enumerate}[(1)]
\setlength\itemsep{10pt}
  \item Compute $\tilde B^{(i)}$ using $[\tilde B^{(i)}]_{jk} = \frac{\partial^2g_k}{\partial t_i \partial t_j}$, as given in the proof of~\cite[Prop.~4.4]{KMS2015}.
  \item Compute $M_\xi = \sum \weight(t_i) t_i \tilde B^{(i)}$, as given in~\eqref{eq:Btilde}.
  \item Compute $\prim^{-m}(E) \in \DerR(-\log\hh)$, using \Cref{prop:mainingredient}.
  \item Transfer $\prim^{-m}(E) \in \DerR(-\log\hh)$ into $\prim^{-m}(E) \in \DerSW$ by specializing $t_i \mapsto \InvSpecial_i(\bbx)$ and using
  \[
    \rowvect^\tr = \Jxt\ \rowvecx^\tr.
  \]
  \item Given a homogeneous basis $\theta_1,\ldots,\theta_\ell$ of  $\FD(\CA,\mu)$ for some $\mu : \CA \rightarrow \{0,1\}$, one finally uses \Cref{prop:flatextension} to compute the homogeneous basis of $\FD(\CA,m\omega+\mu)$.
\end{enumerate}

\medskip

Following~\cite[Sec.~5.17]{AL2016} for $G(3,1,2)$, the potential vector field $\vec{g} = \big(g_1(\bbt),g_2(\bbt) \big)$ is given by
\begin{equation*}
  g_1(\bbt) = \tfrac{1}{18} t_1^3 + t_1 t_2, \qquad
  g_2(\bbt) = \tfrac{1}{54} t_1^4 + \tfrac{1}{2} t_2^2
\end{equation*}
and a flat system of fundamental invariants is given by
\begin{equation*}
  \InvSpecial_1(\bbx) = x_1^3 + x_2^3, \qquad
  \InvSpecial_2(\bbx) = \tfrac{1}{6} x_1^6 - \tfrac{5}{3} x_1^3 x_2^3 + \tfrac{1}{6}x_2^6.
\end{equation*}

\medskip

First, we obtain from the degrees that
\[
  -\Binf = \begin{bmatrix}
            \frac{2}{3} & 0 \\
            0 & \frac{1}{6}
          \end{bmatrix}.
\]
From the potential vector field, we compute
\begin{equation*}
  \tilde B^{(1)} = \begin{bmatrix}
                     \frac{1}{3} t_1 & \frac{2}{9} t_1^2 \\
                     1 & 0
                   \end{bmatrix},
  \quad
  \tilde B^{(2)} = \begin{bmatrix}
                     1 & 0 \\
                     0 & 1
                   \end{bmatrix}
\end{equation*}
implying
\[
  M_\xi = \tfrac{1}{2} t_1 \tilde B^{(1)} + t_2 \tilde B^{(2)} = \begin{bmatrix}
                                                                   \frac{1}{6} t_{1}^{2} + t_{2} & \frac{1}{9} t_{1}^{3} \\
                                                                   \frac{1}{2} t_{1} & t_{2}.
                                                                 \end{bmatrix}
\]
Next, we compute
\begin{equation*}
  \priminv \pmat{\partial_{t_1} \\ \partial_{t_2}} = -B^{-1}_\infty M_\xi \pmat{\partial_{t_1} \\ \partial_{t_2}}=
        \begin{bmatrix}
          \frac{1}{4} t_{1}^{2} + \frac{3}{2} t_{2} & \frac{1}{6} t_{1}^{3} \\
          3 t_{1} & 6 t_{2}
        \end{bmatrix}\pmat{\partial_{t_1} \\ \partial_{t_2}}
\end{equation*}
and
\begin{align*}
  \priminv(t_2 \partial_{t_2})
      &= \tfrac{6}{7}\Big( t_2 \xi_1 - \big([\Mprime]_{21} \priminv(\partial_{t_1}) + [\Mprime]_{22} \priminv(\partial_{t_2}) \big) \Big) \\
      &= \tfrac{6}{7}\big( t_2 ( \tfrac{1}{2} t_1\partial_{t_1} + t_2 \partial_{t_2} ) -  \tfrac{1}{2}t_1 \priminv(\partial_{t_1}) \big)\\
      &= \tfrac{3}{7}t_1t_2\partial_{t_1} + \tfrac{6}{7}t_2^2\partial_{t_2} - \tfrac{3}{7}t_1 \big((\tfrac{1}{4} t_{1}^{2} + \tfrac{3}{2} t_{2})\partial_{t_1} + \tfrac{1}{6} t_{1}^{3} \partial_{t_2} \big)\\
      &= -\big(\tfrac{3}{28} t_{1}^{3} + \tfrac{3}{14} t_{1} t_{2} \big) \partial_{t_1} +  \big(\tfrac{6}{7} t_{2}^{2}-\tfrac{1}{14} t_{1}^{4} \big)\partial_{t_2}.
\end{align*}
Therefore, we have
\begin{align*}
  \priminv(E) &= \priminv(\tfrac{1}{2}t_1\partial_{t_1} + t_2\partial_{t_2} ) \\
              &= \tfrac{1}{2}t_1 \priminv(\partial_{t_1}) + \priminv(t_2\partial_{t_2}) \\
              &= \tfrac{1}{2}t_1 \big((\tfrac{1}{4} t_{1}^{2} + \tfrac{3}{2} t_{2})\partial_{t_1} + \tfrac{1}{6} t_{1}^{3} \partial_{t_2}\big) -\big(\tfrac{3}{28} t_{1}^{3} + \tfrac{3}{14} t_{1} t_{2} \big) \partial_{t_1} +  \big(\tfrac{6}{7} t_{2}^{2}-\tfrac{1}{14} t_{1}^{4} \big)\partial_{t_2}\\
              &= (\tfrac{1}{56} t_{1}^{3} + \tfrac{15}{28} t_{1} t_{2})\partial_{t_1} + (\tfrac{1}{84} t_{1}^{4} + \tfrac{6}{7} t_{2}^{2})\partial_{t_2} \in \DerR(-\log\hh).
\end{align*}
We next compute
\[
  \Jtx = \begin{bmatrix}
           3 x_{1}^{2} & x_{1}^{5} - 5 x_{1}^{2} x_{2}^{3} \\
           3 x_{2}^{2} & -5 x_{1}^{3} x_{2}^{2} + x_{2}^{5}
         \end{bmatrix},
         \qquad
         \det\Jtx = -18 x_{1}^{5} x_{2}^{2} + 18 x_{1}^{2} x_{2}^{5}
\]
and
\[
  \Jxt = \Jtx^{-1} = \det(\Jtx)^{-1} \begin{bmatrix}
                                      -5 x_{1}^{3} x_{2}^{2} + x_{2}^{5} & 5 x_{1}^{2} x_{2}^{3} - x_{1}^{5} \\
                                      -3 x_{2}^{2} & 3 x_{1}^{2}
                                    \end{bmatrix}
\]
to obtain
\[
  \priminv(E) = (\tfrac{1}{28} x_{1}^{7} - \tfrac{1}{4} x_{1}^{4} x_{2}^{3}) \partial_{x_1} + ( \tfrac{1}{28} x_{2}^{7} - \tfrac{1}{4} x_{1}^{3} x_{2}^{4} )\partial_{x_2} \in \DerSW.
\]
We finally obtain
\begin{align*}
  \Theta_1 \eqdef \nabla_{\partial_{x_1}}\priminv(E)
    &= \frac{\partial (\tfrac{1}{28} x_{1}^{7} - \tfrac{1}{4} x_{1}^{4} x_{2}^{3})}{\partial {x_1}} \partial_{x_1} + \frac{\partial ( \tfrac{1}{28} x_{2}^{7} - \tfrac{1}{4} x_{1}^{3} x_{2}^{4} )}{\partial x_1}\partial_{x_2} \\
    &= (\tfrac{1}{4} x_{1}^{6} -  x_{1}^{3} x_{2}^{3}) \partial_{x_1} -\tfrac{3}{4} x_{1}^{2} x_{2}^{4} \partial_{x_2} \\[15pt]
  \Theta_2 \eqdef \nabla_{\partial_{x_2}}\priminv(E)
    &= \frac{\partial (\tfrac{1}{28} x_{1}^{7} - \tfrac{1}{4} x_{1}^{4} x_{2}^{3})}{\partial {x_2}} \partial_{x_1} + \frac{\partial ( \tfrac{1}{28} x_{2}^{7} - \tfrac{1}{4} x_{1}^{3} x_{2}^{4} )}{\partial x_2}\partial_{x_2} \\
    &= -\tfrac{3}{4} x_{1}^{4} x_{2}^{2} \partial_{x_1} + (\tfrac{1}{4} x_{2}^{6}- x_{1}^{3} x_{2}^{3}) \partial_{x_2}.
\end{align*}
One can easily check that $\{\Theta_1,\Theta_2\}$ is indeed a homogeneous basis of $\FD(\CA,\omega)$.

\section{Proof of \Cref{thm:main2}}
\label{sec:proofmain2}

In this section, we prove in \Cref{thm:main2strong} a strengthened version of \Cref{thm:main2} for the imprimitive groups $G(de,e,\ell)$ with
\[
  r \eqdef de \geq 2 \quad \text{and} \quad \ell \geq 2.
\]
We fix these parameters throughout.
This restriction means we exclude the symmetric groups $G(1,1,\ell)$ and the cyclic groups $G(d,1,1)$ from our subsequent considerations.
The first has been treated in \cite{terao:multi}, the second is trivial.

\medskip

Recall that the simple reflection arrangements in the considered cases are given by
\[
  Q(\CA) = \begin{cases}
             (x_1 \cdots x_\ell ) \prod_{1\le i<j \le \ell} (x_i^r-x_j^r) &\text{ if } d > 1, \\[5pt]
             \phantom{(x_1 \cdots x_\ell )} \prod_{1\le i<j \le \ell} (x_i^r-x_j^r) &\text{ if } d = 1,
           \end{cases}
\]
see~\cite[Sec.~6.4]{orlikterao:arrangements}.
Moreover,
\begin{align*}
  e_H &= d \quad \text{ for } H = \ker(x_i) \text{ with } 1 \leq i \leq \ell, \\
  e_H &= 2 \quad \text{ for } H = \ker(x_i - \zeta x_j) \text{ with } 1 \leq i < j \leq \ell \text{ and } \zeta^r = 1.
\end{align*}

The following theorem is our more general version of \Cref{thm:main2}.
\begin{theorem}
\label{thm:main2strong}
  Let $m,m_1,\ldots,m_\ell, \in \mathbb{N}$ such that $q\eqdef\lfloor (m_i-1)/r \rfloor$ does not depend on~$i$.
  Set $a \eqdef (\ell-1)r, m' \eqdef \sum m_i$, and $c \eqdef ma +qr + 1$.
  \begin{enumerate}[(i)]
    \item\label{eq:main2strong1} The multi-arrangement $(\CA,\mu)$ with defining polynomial
      \begin{equation*}
        Q(\CA,\mu) = x_1^{m_1} \cdots x_\ell^{m_\ell} \prod_{1\le i<j \le \ell} (x_i^r-x_j^r)^{2m}
      \end{equation*}
      is free with exponents
      \begin{equation*}
        \exp(\CA,\mu) = \big\{ ma+m_1,\ldots,ma+m_\ell \big\}. 
      \end{equation*}

    \item\label{eq:main2strong2} The multi-arrangement $(\CA,\mu)$ with defining polynomial
      \begin{equation*}
        Q(\CA,\mu) = x_1^{m_1} \cdots x_\ell^{m_\ell} \prod_{1\le i<j \le \ell} (x_i^r-x_j^r)^{2m+1}
      \end{equation*}
      is free with exponents
      \begin{equation*}
        \begin{aligned}
          \exp(\CA,\mu) &= \Big\{ c + m' - \ell( qr + 1), \quad c + r, c + 2r, \ldots, c + (\ell-1)r \Big\} \\
                        &= \Big\{ (m-q)a +  m' - \ell + 1, \\
              &\hspace*{24.5pt} ma + (q+1)r + 1, ma + (q+2)r + 1, \ldots, ma + (q+\ell-1)r + 1 \Big\}.
        \end{aligned}        
      \end{equation*}
  \end{enumerate}
\end{theorem}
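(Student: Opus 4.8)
The plan is to prove parts~\eqref{eq:main2strong1} and~\eqref{eq:main2strong2} of \Cref{thm:main2strong} together, by a double induction: an outer induction on the rank~$\ell$ and an inner induction on the total coordinate multiplicity $\sum_i m_i$, both driven by the Addition--Deletion Theorem for multi-arrangements (\Cref{thm:add-del-multi}) and by \Cref{prop:Euler}. The crucial structural point is that the inductive steps should use \emph{deletion of a coordinate hyperplane} $H_0 = \ker(x_\ell)$: restricting to such an $H_0$ keeps one inside the class of monomial multi-arrangements under consideration (coordinates $x_1,\dots,x_{\ell-1}$ together with the braid-type hyperplanes $x_i^r = x_j^r$), whereas restricting to a braid hyperplane does not.

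For the base case, $\ell = 1$ is immediate and $\ell = 2$ is a rank-$2$ computation: every rank-$2$ multi-arrangement is free by Ziegler, so one only has to read off the two exponents of $x_1^{m_1}x_2^{m_2}(x_1^r - x_2^r)^{2m}$, resp.\ of $x_1^{m_1}x_2^{m_2}(x_1^r - x_2^r)^{2m+1}$. I expect to exhibit the homogeneous basis explicitly — generalizing the derivations $\sum_i x_i^{kr+1}\partial_{x_i}$, which span the logarithmic module of the simple $G(r,1,\ell)$-arrangement, by a generalized Vandermonde determinant — and to verify with \Cref{thm:zieglersaito}\eqref{eq:zieglersaito3} that the polynomial degrees match $\deg Q(\CA,\mu)$. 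It is exactly here that the hypothesis that $q = \lfloor (m_i-1)/r\rfloor$ is independent of $i$ enters: it is what forces the two exponents into the asserted form.

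For the inductive step I would form the triple $(\CA,\mu),(\CA',\mu'),(\CA'',\mu^*)$ with respect to $H_0 = \ker(x_\ell)$. On the restriction $\CA''$ the braid flats $\ker(x_i-\zeta x_j)\cap H_0$ with $i<j<\ell$ have localization $\{\ker(x_i-\zeta x_j),H_0\}$, so \Cref{prop:Euler} gives Euler multiplicity $2m$ (resp.\ $2m+1$) there; the coordinate flats $\ker(x_i)\cap H_0$ with $i<\ell$ have localization $\{\ker x_i,H_0\}\cup\{\ker(x_i-\zeta x_\ell):\zeta^r=1\}$, which is an instance of the already-treated $\ell = 2$ case, and comparing its exponents identifies the Euler multiplicity there as $mr+m_i$. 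Hence $(\CA'',\mu^*)$ is again of the form in the theorem, in rank $\ell-1$, with coordinate multiplicities $mr+m_i$ — whose common floor equals $m+q$ — so the outer inductive hypothesis applies and produces $\exp(\CA'',\mu^*)$. The deletion $(\CA',\mu')$ is the same arrangement with $m_\ell$ lowered by one; if $m_\ell > qr+1$ it still satisfies the hypothesis and is covered by the inner induction, and if $m_\ell = qr+1$ one first relabels to lower a coordinate multiplicity that exceeds $qr+1$ (when one exists). Feeding these two statements into \Cref{thm:add-del-multi} yields freeness and exponents of $(\CA,\mu)$; the equivalence of the two formulae for $\exp(\CA,\mu)$ in~\eqref{eq:main2strong2} is then the identity of \Cref{lem:hUstability}.

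The main obstacle, I expect, is the genuine base of the inner induction — the configurations with all $m_i = qr+1$ — together with controlling the two parameters $m$ (the braid multiplicity) and $q$: neither can be changed by lowering a single coordinate multiplicity without destroying the uniformity of $q$, and a single braid deletion produces a restriction whose braid multiplicities are non-uniform (e.g.\ $3m-1$ in place of $2m$ along the merged coordinate), so the induction does not close by purely combinatorial means. To cover the cases $q\in\{m-1,m\}$ I would invoke \Cref{thm:main1strong} for the well-generated groups $G(r,1,\ell)$ and $G(r,r,\ell)$: the natural multiplicity functions $\mu_0:\CA\to\{0,1\}$ with $\FD(\CA,\mu_0)$ free supply precisely the configurations with all $m_i\in\{mr,mr+1\}$ and braid multiplicity $2m$ or $2m+1$, from which the coordinate addition/deletion sweeps out the full $q$-range. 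For the remaining $(m,q)$ (as needed when $1<p<r$ in $G(r,p,\ell)$) one must either carry the non-uniform restricted arrangements along as part of a strengthened inductive statement, or produce a closed-form homogeneous basis of $\FD(\CA,\mu)$ and finish with \Cref{thm:zieglersaito}\eqref{eq:zieglersaito3}; this is where I anticipate the real work to lie.
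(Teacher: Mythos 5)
Your global architecture --- outer induction on $\ell$, inner induction on $\sum m_i$, Addition--Deletion (\Cref{thm:add-del-multi}) always with respect to a coordinate hyperplane $\ker(x_\ell)$, and Euler multiplicities read off via \Cref{prop:Euler} for the braid flats and via the rank-$2$ case for the coordinate flats --- is exactly the paper's. But there are two genuine gaps. First, you dismiss the rank-$2$ case as ``read off the two exponents'', whereas this is the technical core of the whole proof: Ziegler's result gives freeness in rank $2$ for free, but not the exponents, and the exponents are what feed the Euler multiplicities. The paper (\Cref{prep:exp_gr1l_even}, \Cref{prep:exp_gr1l_odd}) constructs explicit bases $\theta_1,\theta_2$ whose coefficient polynomials solve a linear system in binomial coefficients, and must prove that certain coefficients are \emph{non-zero} --- via the Gessel--Viennot lemma in the even case and Krattenthaler's determinant evaluation (\Cref{prep:detlemma}, \Cref{detcorollary}) in the odd case --- precisely so that the derivations can be divided by powers of $x$ and $y$ to cover all $m_1,m_2$ in the admissible window (\Cref{even_cor}, \Cref{odd_cor}). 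Your ``generalized Vandermonde determinant'' does not engage with this non-vanishing problem, which is where the real difficulty sits.

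Second, your diagnosis of the inner induction's base is wrong, and the workaround you propose is unnecessary (and would be much harder than the actual proof). The configurations with all $m_i=qr+1$ are \emph{not} irreducible base cases: the paper states \Cref{even_theorem} and \Cref{odd_theorem} for the slightly wider window $(k-1)r+1\le m_i\le kr+1$ (length $r+1$ rather than the length-$r$ window of uniform $q$), and this class \emph{is} closed under deleting one unit from a maximal $m_\ell$ --- when all $m_i=(k-1)r+1$ one simply replaces $k$ by $k-1$, which does not affect the asserted exponents. The descent therefore terminates at $m_1=\cdots=m_\ell=0$, and the \emph{only} external input is \Cref{thm:main1} for the well-generated group $G(r,r,\ell)$ applied to $\prod(x_i^r-x_j^r)^{2m}$ and $\prod(x_i^r-x_j^r)^{2m+1}$. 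No appeal to \Cref{thm:main1strong} for varying $q$, no ``non-uniform restricted arrangements'', and no closed-form bases in rank $\ge 3$ are needed; your worry about braid deletions is moot since no braid hyperplane is ever deleted. (Minor point: the equality of the two exponent formulas in part~\eqref{eq:main2strong2} is elementary arithmetic in $m,q,r,\ell$; \Cref{lem:hUstability} is used only later, to convert \Cref{thm:main2strong} into \Cref{thm:main2}.)
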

In~\eqref{eq:main2strong2}, we provide two alternative formulas for later reference.
We prove the two parts of this theorem in \Cref{sec:thmmain2strong1,,sec:thmmain2strong2}, respectively.

\medskip

Armed with \Cref{thm:main2strong}, we can deduce our second main result, \Cref{thm:main2}.
We treat the three cases $d=1$, $e=1$, and $d,e \geq 2$ separately, 
and observe that the first two are well-generated while the third is not.

\begin{proof}[Proof of \Cref{thm:main2}~\eqref{eq:main21}]
  For $d=1$, we have Coxeter number $h = (\ell-1)e$.
  Consider the defining polynomial
  \[
    Q(\CA,m\omega) = \prod_{1\le i<j \le \ell} (x_i^{r}-x_j^{r})^{2m}.
  \]
  This is the case $m_1=m_2=\ldots=m_\ell=0$ in \Cref{thm:main2strong}\eqref{eq:main2strong1}.
  Thus,~$(\CA,m\omega)$ is free  with
  \[
    \exp(\CA,m\omega) = \big\{ m(\ell-1)e,\ldots,m(\ell-1)e\big\} = \big\{ mh,\ldots,mh\big\},
  \]
  as claimed.

  \medskip

  For $e=1$, we have Coxeter number $h = \ell d$.
  Consider the defining polynomial
  \[
    Q(\CA,m\omega) = x_1^{rm} \cdots x_\ell^{rm} \prod_{1\le i<j \le \ell} (x_i^{r}-x_j^{r})^{2m}.
  \]
  This is the case $m_1=m_2=\ldots=m_\ell = rm$ in \Cref{thm:main2strong}\eqref{eq:main2strong1}.
  Thus,~$(\CA,m\omega)$ is free  with
  \[
    \exp(\CA,m\omega) = \big\{ m\ell r,\ldots,m\ell r \big\} = \big\{ mh,\ldots,mh\big\},
  \]
  as claimed.

  \medskip

  For $d,e \geq 2$, we have Coxeter number $h=(\ell-1)r+d$.
  Consider the defining polynomial
  \begin{equation*}
    Q(\CA,m\omega) = x_1^{dm} \cdots x_\ell^{dm} \prod_{1\le i<j \le \ell} (x_i^{r}-x_j^{r})^{2m}. \label{eq:Grpleven}
  \end{equation*}
  This is the case $m_1=m_2=\ldots=m_\ell=dm$ in \Cref{thm:main2strong}\eqref{eq:main2strong1}.
  Thus,~$(\CA,m\omega)$ is free  with
  \[
    \exp(\CA,m\omega) = \big\{ m((\ell-1)r+d),\ldots,m((\ell-1)r+d) \big\} = \big\{ mh,\ldots,mh\big\},
  \]
  as claimed.
\end{proof}

\begin{proof}[Proof of \Cref{thm:main2}~\eqref{eq:main22}]
  For $d=1$, we have Coxeter number $h = (\ell-1)r$, and
  \[
     \big\{n_1(V^*),\ldots,n_\ell(V^*)\big\} = \big\{ (\ell-1)r-\ell+1,\quad 1,r+1,2r+1,\ldots,(\ell-2)r+1\big\}.
  \]
  Consider the defining polynomial
  \[
    Q(\CA,m\omega+1) = \prod_{1\le i<j \le \ell} (x_i^{r}-x_j^{r})^{2m+1}.
  \]
  This is the case $m_1=m_2=\ldots=m_\ell=0$ in \Cref{thm:main2strong}\eqref{eq:main2strong2}.
  We have $a=(\ell-1)r, q=-1$, and $m'=0$, and $(\CA,m\omega+1)$ is free with
  \begin{align*}
    \exp(\CA,&m\omega +1) \\
         &=
                 \Big\{ (m+1)(\ell-1)r-\ell+1, \\
                &\quad\quad m(\ell-1)r + 1,m(\ell-1)r + r + 1, \ldots, m(\ell-1)r + (\ell-2)r + 1 \Big\} \\
                &= \Big\{ mh+(\ell-1)r-\ell+1,\quad mh+1,mh+r+1,\ldots,mh+(\ell-2)r+1 \Big\}\\
                &= \big\{ mh+n_1(V^*),\ldots,mh+n_\ell(V^*) \big\},
  \end{align*}
  as claimed.

  \medskip

  For $e=1$, we have Coxeter number $h = \ell d$ and
  \[
    \big\{ n_1(V^*),\ldots,n_\ell(V^*)\big\} = \big\{ 1,d+1,2d+1,\ldots,(\ell-1)d+1\big\}.
  \]
  Consider the defining polynomial
  \[
    Q(\CA,m\omega+1) = x_1^{dm+1} \cdots x_\ell^{dm+1} \prod_{1\le i<j \le \ell} (x_i^{d}-x_j^{d})^{2m+1}.
  \]
  This is the case $r=d, m_1=m_2=\ldots=m_\ell = dm+1$ in \Cref{thm:main2strong}\eqref{eq:main2strong2}.
  We have $a=(\ell-1)d,q=m$, and $m'=\ell dm+\ell$, and $(\CA,m\omega+1)$ is free with 
  \begin{align*}
    \exp(\CA,m\omega+1) &= 
      \Big\{ \ell dm+\ell-\ell+1, \\
                &\hspace*{40pt} m(\ell-1)d + (m+1)d + 1, \ldots, m(\ell-1)d + (m+\ell-1)d + 1 \Big\} \\
                &= \Big\{ mh+1,mh+d+1,\ldots,mh+(\ell-1)d+1 \Big\}\\
                &= \big\{ mh+n_1(V^*),\ldots,mh+n_\ell(V^*) \big\},
  \end{align*}
  as claimed.

  \medskip

  For $d,e \geq 2$, we have Coxeter number $h=(\ell-1)de+d$, and, as computed in~\cite[Sec.~3]{gordongriffeth}, we have $\ord(\Psi) = e$ with
  \begin{multline}
    \big\{ n_1(\Psi^{s}(V^*)^*), \ldots , n_\ell(\Psi^{s}(V^*)^*)\big\} = \\
    \Big\{ d(s(\ell-1)+\ell)-1, \quad d((\ell-1)e-s)-1,\ldots,d(2e-s)-1,d(e-s)-1 \Big\}
    \label{eq:psi}
  \end{multline}
  for $0 \leq s < e$.
  We consider the defining polynomial
  \[
    Q(\CA,\mu) = x_1^{dm+1} \cdots x_\ell^{dm+1} \prod_{1\le i<j \le \ell} (x_i^{de}-x_j^{de})^{2m+1}.
  \]
  This is the case $m_1=m_2=\ldots=m_\ell=dm+1$ in \Cref{thm:main2strong}\eqref{eq:main2strong2}.
  We have $a=(\ell-1)de, q=b$ with uniquely written $m = be+s$ for $0\leq s < e$, and $m'=\ell dm+\ell$.
  Consequently, using~\eqref{eq:constant_h} and~\eqref{eq:psi}, $(\CA,m\omega+1)$ is free and 
  \begin{align*}
    \exp(\CA,m\omega+1) &= \Big\{ (m-b)(\ell-1)de+\ell dm+1, \\ 
                &\hspace*{25pt} (m(\ell-1)+b+1)de + 1, \ldots, (m(\ell-1)+b+(\ell-1))de + 1 \Big\} \\
                &= \Big\{ (m+1)((\ell-1)de+d) - d((e-s-1)(\ell-1)+\ell) + 1, \\
                &\hspace*{25pt} (m+1)((\ell-1)de+d) - d((\ell-1)e-(e-1-s)) + 1, \ldots,\\
                &\hspace*{35pt} (m+1)((\ell-1)de+d) - d(e-(e-1-s)) + 1 \Big\}\\  
                &= \big\{ (m+1)h -n_1(\Psi^{e-1-s}(V^*)^*),\ldots,(m+1)h -n_\ell(\Psi^{e-1-s}(V^*)^*) \big\}\\
                &= \big\{ (m+1)h -n_1(\Psi^{-m-1}(V^*)^*),\ldots,(m+1)h -n_\ell(\Psi^{-m-1}(V^*)^*) \big\}\\
                &= \big\{ mh+n_1(V^*),\ldots,mh+n_\ell(V^*) \big\},
  \end{align*}
  as claimed.
\end{proof}

In the remainder of this section, we prove the two parts of \Cref{thm:main2strong} separately.

\subsection{Proof of \Cref{thm:main2strong}\eqref{eq:main2strong1}}
\label{sec:thmmain2strong1}

We begin with the situation in rank~$2$ and set $S = \BBC[x,y]$ in this case.

\begin{lemma}
\label{prep:exp_gr1l_even}
  Let $r\ge 2$, $m \ge 1$ and $k \ge 0$.
  Define $(\CA,\mu)$ by
  \[
    Q(\CA,\mu) = x^{kr+1}y^{kr+1} (x^r-y^r)^{2m}.
  \]
  Then $(\CA,\mu)$ is free with
  \[
    \exp(\CA,\mu) = \big\{(m+k)r+1,(m+k)r+1 \big\}.
  \]
  Moreover, there are homogeneous polynomials $q_1, q_2 \in \BBZ[x,y]$ of degrees~$m$ and~$m-1$, respectively, such that
  \begin{enumerate}[(i)]
    \item all coefficients of $q_1,q_2$ are non-zero, and
    \item the homogeneous derivations
      \begin{align*}
        \theta_1 &\eqdef x^{kr+1} q_1(x^r,y^r) \partial_{x} + y^{kr+1} x^r q_2(x^r,y^r) \partial_{y}\\
        \theta_2 &\eqdef x^{kr+1} y^r q_2(y^r,x^r) \partial_{x} + y^{kr+1} q_1(y^r,x^r) \partial_{y}
      \end{align*}
      form a basis of $\FD(\CA,\mu)$.
  \end{enumerate}
\end{lemma}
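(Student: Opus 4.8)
Since $\CA$ has rank~$2$, $\FD(\CA,\mu)$ is automatically free by~\cite[Cor.~7]{ziegler:multiarrangements}, so the real content is to exhibit a homogeneous basis of the asserted shape. The plan is to produce two $S$-linearly independent homogeneous derivations lying in $\FD(\CA,\mu)$, each of polynomial degree $(m+k)r+1$; since $(m+k)r+1+(m+k)r+1 = 2(kr+1)+2mr = \deg Q(\CA,\mu)$, \Cref{thm:zieglersaito}\eqref{eq:zieglersaito3} then forces them to be a basis and pins the exponents to $\{(m+k)r+1,(m+k)r+1\}$. For a derivation of the proposed form $x^{kr+1}(\cdots)\partial_x+y^{kr+1}(\cdots)\partial_y$ the membership conditions along $\ker x$ and $\ker y$ (multiplicities $kr+1$) hold trivially. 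For the hyperplanes $\ker(x-\zeta y)$ with $\zeta^r=1$ I would first reduce to a single condition: the automorphism $y\mapsto\zeta^{-1}y$ of $S$ fixes $q_i(x^r,y^r)$ and multiplies $y^{kr+1}$ by $\zeta^{-1}$, hence sends $\theta_1(x-y)$ to $\theta_1(x-\zeta^{-1}y)$ and the ideal $(x-y)$ to $(x-\zeta^{-1}y)$; as $\zeta\mapsto\zeta^{-1}$ permutes the $r$-th roots of unity, divisibility of $\theta_1(x-y)$ by $(x-y)^{2m}$ already implies $\theta_1(x-\zeta y)\in (x-\zeta y)^{2m}S$ for every $\zeta$. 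Dehomogenising at $y=1$, the one surviving requirement for $\theta_1$ is
\[
  (x-1)^{2m}\ \Big|\ x^{kr+1}q_1(x^r,1)-x^{r}q_2(x^r,1),
\]
and the corresponding statement for $\theta_2$ follows because $\theta_2$ is the image of $\theta_1$ under the involution $x\leftrightarrow y$, which preserves $\FD(\CA,\mu)$.

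The core step is to solve this divisibility together with property~(i). Writing $q_1(u,v)=\sum_{j=0}^{m}a_ju^{m-j}v^j$ and $q_2(u,v)=\sum_{j=0}^{m-1}c_ju^{m-1-j}v^j$, the polynomial above is a linear combination of the $2m+1$ monomials $x^{kr+1+rj}$ $(0\le j\le m)$ and $x^{rj}$ $(1\le j\le m)$, whose exponents are pairwise distinct: the first family is $\equiv 1$, the second $\equiv 0$ modulo $r$, and here $r\ge 2$. Divisibility by $(x-1)^{2m}$ is the vanishing at $x=1$ of all derivatives of orders $0,\dots,2m-1$; since $\frac{d^{i}}{dx^{i}}x^{N}\big|_{x=1}$ equals the falling factorial $(N)_i$, a monic polynomial of degree $i$ in $N$, column operations turn the resulting $(2m+1)\times 2m$ coefficient matrix into a Vandermonde matrix in the distinct exponents. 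Hence any $2m$ of its rows are linearly independent, so the relations among the rows form a one-dimensional space, and deleting any single row leaves an independent set, which forces every coefficient of the unique relation to be nonzero. Scaling to clear denominators gives $q_1,q_2\in\BBZ[u,v]$ of degrees $m$ and $m-1$ with all coefficients nonzero, establishing~(i), and by construction $\theta_1,\theta_2\in\FD(\CA,\mu)$.

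It remains to see that $\theta_1,\theta_2$ are $S$-linearly independent; both are clearly homogeneous of polynomial degree $(m+k)r+1$. A direct computation gives
\[
  \det M(\theta_1,\theta_2)=x^{kr+1}y^{kr+1}\Big(q_1(x^r,y^r)q_1(y^r,x^r)-x^ry^r\,q_2(x^r,y^r)q_2(y^r,x^r)\Big),
\]
and the bracketed factor is a nonzero polynomial because its value at $(x,y)=(1,0)$ is $a_0a_m\ne 0$ by~(i); hence $\det M(\theta_1,\theta_2)\ne 0$. Now \Cref{thm:zieglersaito}\eqref{eq:zieglersaito3} yields that $\{\theta_1,\theta_2\}$ is an $S$-basis of $\FD(\CA,\mu)$ and that $\exp(\CA,\mu)=\{(m+k)r+1,(m+k)r+1\}$, completing~(ii). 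I expect the main obstacle to be precisely the middle step: recognising the divisibility conditions as a Vandermonde system on the exponents, which is what simultaneously provides existence and uniqueness of $(q_1,q_2)$ and — the genuinely delicate point — the non-vanishing of all of their coefficients.
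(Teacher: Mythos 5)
Your proposal is correct and follows essentially the same strategy as the paper: translate the condition $\theta_1(x-y)\in(x-y)^{2m}S$ into a homogeneous linear system of $2m$ equations in the $2m+1$ coefficients of $q_1,q_2$, deduce existence of a nontrivial integral solution, show that every maximal minor of the coefficient matrix is nonzero to conclude that no coefficient vanishes, and finish with \Cref{thm:zieglersaito}\eqref{eq:zieglersaito3} after checking the determinant of the Saito matrix. The only substantive difference is that where the paper invokes the Gessel--Viennot lemma to see that the $2m\times 2m$ binomial-coefficient minors are invertible, you reduce the falling-factorial matrix to a Vandermonde matrix in the (distinct, since they lie in different residue classes mod $r$) exponents; this is an equivalent but self-contained justification of the same key nonvanishing. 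Your up-front reduction of the conditions at all hyperplanes $\ker(x-\zeta y)$ to the single case $\zeta=1$ via the substitution $y\mapsto\zeta^{-1}y$, and the derivation of $\theta_2$ from $\theta_1$ by the involution $x\leftrightarrow y$, likewise match the paper's verification in substance.
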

\begin{proof}
  We aim to define $q_1$ and $q_2$ such that $\theta_1(x-y) \in (x-y)^{2m}S$.
  Let $q_1(x,y) = \sum_{i=0}^m a_i x^{i} y^{m-i}$ and $q_2(x,y) = \sum_{i=0}^{m-1} b_i x^{i} y^{m-1-i}$ for some $a_i, b_i \in \BBQ$.
  Then
  \[
  P(x,y) \eqdef \theta_1(x-y) = \sum_{i=0}^m a_i x^{(i+k)r+1} y^{(m-i)r} - 
                          \sum_{i=0}^{m-1} b_i x^{(i+1)r} y^{(m-1-i+k)r+1}.
  \]
  Since we require $P(x,y) \in (x-y)^{2m}S$, the coefficients $a_0,\ldots,a_m, b_0,\ldots,b_{m-1}$ form a solution of the following system of linear equations over~$\BBQ$
  \[
    \left(\left(\frac{d}{dx}\right)^j P\right)(x,x) = 0
  \]
  for $j=0,\ldots,2m-1$.

  The entries of the corresponding matrix are just given by the exponents of~$x$ in~$q_1,q_2$.
  Dividing the $j$-th equation by~$j!$, the entries of the respective equations become
  \[
    \binom{(i+k)r+1}{j} \quad \text{and} \quad -\binom{(i+1)r}{j}
  \]
  for~$a_i$ with $i=0,\ldots,m$, and, respectively, for $b_i$ with $i=0,\ldots,m-1$.
  We may avoid the minus sign by replacing~$b_i$ by $-b_i$.
  The homogeneous system has~$2m$ equations and $2m+1$ unknowns.
  Thus, we may choose a non-trivial solution with $a_i,b_i \in \BBZ$ for all~$i$.

  Now assume that one of the~$a_i$ or one of the~$b_i$ is zero, so that we may omit the corresponding summand in~$q_1$ or~$q_2$.
  This corresponds to deleting the coefficients of this monomial in the given system of equations.
  But then, the matrix for $j=0,\ldots,2m-1$ is a $(2m \times 2m)$-matrix, which is invertible thanks to the famous Gessel-Viennot lemma~\cite{gessel:binomialdeterminants}.
  In this case, there is only the trivial solution.
  This contradicts the fact that we have already obtained a non-trivial solution in the previous paragraph.
  Hence, none of the $a_i$ or $b_i$ are zero. 

  Next, we check that~$\theta_1 \in \FD(\CA,\mu)$. 
  By construction, $\theta_1(x) \in x^{kr+1}S$, $\theta_1(y) \in y^{kr+1}S$ and $\theta_1(x-y) \in (x-y)^{2m}S$.
  Then, for~$\zeta$ an $r$-th root of unity, we have
  \begin{align*}
  \theta_1(x-\zeta y) &= x^{kr+1}q_1(x^r,y^r) - \zeta y^{kr+1}x^r q_2(x^r,y^r)\\
                      &= x^{kr+1}q_1(x^r,(\zeta y)^r) - (\zeta y)^{kr+1}x^r q_2(x^r,(\zeta y)^r)\\
                      &= P(x,\zeta y) \in (x-\zeta y)^{2m}S.
  \end{align*}
  Hence $\theta_1\in \FD(\CA,\mu)$.
  Likewise, we also get that $\theta_2 \in \FD(\CA,\mu)$.
  Observe that
  \[
  \det M(\theta_1,\theta_2) = x^{kr+1}y^{kr+1} q_1(x^r,y^r)q_1(y^r,x^r) - x^{(k+1)r+1}y^{(k+1)r+1} q_2(x^r,y^r)q_2(y^r,x^r)
  \]
  is non-zero of degree $|\mu|$.
  (The first part is only divisible by $x^{kr+1}$ and the second part is divisible by $x^{(k+1)r+1}$.)
  Thus $\theta_1$ and $\theta_2$ are independent over $S$. 
  Since $\theta_1$ and $\theta_2$ are homogeneous and $\pdeg \theta_1+ \pdeg\theta_2 = |\mu|$, 
  it follows from \Cref{thm:zieglersaito}\eqref{eq:zieglersaito3} that 
  $\{\theta_1,\theta_2\}$ is a basis of $\FD(\CA,\mu)$).
\end{proof}

\begin{corollary}
\label{even_cor}
  Let $r\ge 2$, $m \ge 1$, $k \ge 0$ and $m_1,m_2 \ge 0$ such that $(k-1)r+1 \le m_1,m_2 \le kr+1$.
  Define $(\CA,\mu)$ by
  \[
    Q(\CA,\mu) = x^{m_1}y^{m_2} (x^r-y^r)^{2m}.
  \]
  Then $(\CA,\mu)$ is free with
  \[
    \exp(\CA,\mu) = \{rm+m_1,rm+m_2\}.
  \]
\end{corollary}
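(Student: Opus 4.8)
The plan is to obtain the corollary directly from \Cref{prep:exp_gr1l_even} by ``dividing out'' appropriate powers of $x$ and $y$ from the explicit basis produced there, rather than running any addition--deletion induction. The guiding observation is that, since $m_1,m_2\le kr+1$, the module $\FD(\CA,\nu)$ with $Q(\CA,\nu)=x^{kr+1}y^{kr+1}(x^r-y^r)^{2m}$ is contained in $\FD(\CA,\mu)$ for the $\mu$ of the corollary; the only defect is that the basis derivations of \Cref{prep:exp_gr1l_even} have polynomial degree $rm+kr+1$, too large by $kr+1-m_1$ and $kr+1-m_2$ respectively, and one must cancel exactly these amounts.

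Concretely, let $\theta_1=x^{kr+1}q_1(x^r,y^r)\,\partial_x+x^ry^{kr+1}q_2(x^r,y^r)\,\partial_y$ and $\theta_2=x^{kr+1}y^rq_2(y^r,x^r)\,\partial_x+y^{kr+1}q_1(y^r,x^r)\,\partial_y$ be the basis of $\FD(\CA,\nu)$ from \Cref{prep:exp_gr1l_even}, and set
\[
  \theta_1'\eqdef x^{-(kr+1-m_1)}\theta_1,\qquad \theta_2'\eqdef y^{-(kr+1-m_2)}\theta_2 .
\]
I would first check that $\theta_1',\theta_2'$ are genuine polynomial derivations: expanding, $\theta_1'=x^{m_1}q_1(x^r,y^r)\,\partial_x+x^{\,m_1-(k-1)r-1}y^{kr+1}q_2(x^r,y^r)\,\partial_y$, which is polynomial precisely because $m_1\ge(k-1)r+1$ (and $m_1\ge0$); symmetrically $\theta_2'=x^{kr+1}y^{\,m_2-(k-1)r-1}q_2(y^r,x^r)\,\partial_x+y^{m_2}q_1(y^r,x^r)\,\partial_y$ is polynomial because $m_2\ge(k-1)r+1$. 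This is the one place where the lower bound on $m_1,m_2$ enters, and it subsumes the boundary cases $m_i\in\{(k-1)r+1,kr+1\}$ and $k=0$. Next, $\theta_1',\theta_2'\in\FD(\CA,\mu)$: divisibility of $\theta_i'(x)$ by $x^{m_1}$ and of $\theta_i'(y)$ by $y^{m_2}$ is read off from the displayed forms together with $m_1,m_2\le kr+1$; and for each $r$-th root of unity $\zeta$ one has $\theta_i'(x-\zeta y)=x^{-(kr+1-m_1)}\theta_i(x-\zeta y)$ (resp.\ with $y$), which still lies in $(x-\zeta y)^{2m}S$ since $\theta_i(x-\zeta y)\in(x-\zeta y)^{2m}S$ and $x$ (resp.\ $y$) is coprime to $x-\zeta y$ as $\zeta\neq0$.

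Finally I would invoke \Cref{thm:zieglersaito}: the derivations $\theta_1',\theta_2'$ are homogeneous with $\pdeg\theta_1'=rm+m_1$ and $\pdeg\theta_2'=rm+m_2$, and, using $\det M(\theta_1,\theta_2)\doteq Q(\CA,\nu)=x^{kr+1}y^{kr+1}(x^r-y^r)^{2m}$ from \Cref{prep:exp_gr1l_even},
\[
  \det M(\theta_1',\theta_2')=x^{-(kr+1-m_1)}y^{-(kr+1-m_2)}\det M(\theta_1,\theta_2)\doteq x^{m_1}y^{m_2}(x^r-y^r)^{2m}=Q(\CA,\mu),
\]
so $\{\theta_1',\theta_2'\}$ is an $S$-basis of $\FD(\CA,\mu)$ and $\exp(\CA,\mu)=\{rm+m_1,\,rm+m_2\}$. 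The only real work is the divisibility bookkeeping in the middle paragraph; everything else is formal, and the whole argument rests on the two-sided estimate $(k-1)r+1\le m_i\le kr+1$ — the upper bound guaranteeing that one divides rather than multiplies and that the $x^{m_i},y^{m_i}$ conditions survive, the lower bound guaranteeing that the cancelled power of $x$ or $y$ is genuinely a common factor of the coefficients of $\theta_i$.
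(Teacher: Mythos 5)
Your proof is correct and follows essentially the same route as the paper: divide the explicit basis of \Cref{prep:exp_gr1l_even} by suitable monomial factors and confirm the result via Ziegler's criterion (\Cref{thm:zieglersaito}). If anything, your write-up is the more complete one, since you divide by the exact powers $x^{kr+1-m_1}$ and $y^{kr+1-m_2}$ needed for arbitrary $m_1,m_2$ in the allowed range and check the resulting divisibilities carefully, whereas the paper's displayed formulas only record the division by $x^r$ and $y^r$ (resp.\ $x$ and $y$ when $k=0$).
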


\begin{proof}
  A basis $\{\tilde\theta_1,\tilde\theta_2\}$ of $\FD(\CA,\mu)$ is given by
  \begin{equation*}
    \tilde\theta_1 = \begin{cases}
                       \theta_1 / x   & \text{ if } k=0, \\
                       \theta_1 / x^r & \text{ if } k>0,
                     \end{cases}
    \quad\text{and}\quad
    \tilde\theta_2 = \begin{cases}
                       \theta_2 / y   & \text{ if } k=0, \\
                       \theta_2 / y^r & \text{ if } k>0,
                     \end{cases}
  \end{equation*}
  where $\theta_1$ and $\theta_2$ are given as in \Cref{prep:exp_gr1l_even}.
\end{proof}

We next use the rank~$2$ considerations to prove the general rank~$\ell$ case.

\begin{theorem}
\label{even_theorem}
  Let $\ell,r \ge 2$, $k \ge 0$, $m_1,\ldots,m_\ell \ge 0$ and $(k-1)r+1 \le m_1,\ldots,m_\ell \le kr+1$.
  Define $(\CA,\mu)$ by
  \[
    Q(\CA,\mu) = x_1^{m_1} \cdots x_\ell^{m_\ell} \prod_{1 \le i<j \le \ell} (x_i^r - x_j^r)^{2m}.
  \]
  Then $\FD(\CA,\mu)$ is free with
  \[
    \exp(\CA,\mu) = \{c+m_1,\ldots,c+m_\ell\},
  \]
  where $c = (\ell-1)mr$.
\end{theorem}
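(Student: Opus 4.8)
The plan is to argue by induction using the addition-deletion theorem for multi-arrangements, \Cref{thm:add-del-multi}: an outer induction on the rank~$\ell$, and for each fixed $\ell\ge 3$ an inner induction on $m_1+\cdots+m_\ell$. The base case $\ell=2$ of the outer induction is \Cref{even_cor}. The base case $m_1=\cdots=m_\ell=0$ of the inner induction produces the reflection multi-arrangement of the well-generated group $G(r,r,\ell)$ (which is irreducible since $\ell\ge 3$) with constant multiplicity $2m$ on each hyperplane $\ker(x_i-\zeta x_j)$; by \Cref{thm:main1} applied with $d=1$ (so that $\omega\equiv 2$ and $h=(\ell-1)r$) this multi-arrangement is free with exponents $\{c,\ldots,c\}$, $c=(\ell-1)mr$, as desired.

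For the inductive step assume $\ell\ge 3$ and $\sum_i m_i\ge 1$. After permuting the coordinates we may assume $m_\ell=\max_i m_i\ge 1$, and we take $H_0=\ker(x_\ell)$. The deletion $(\CA',\mu')$ with respect to $H_0$ has defining polynomial $x_1^{m_1}\cdots x_{\ell-1}^{m_{\ell-1}}x_\ell^{m_\ell-1}\prod_{i<j}(x_i^r-x_j^r)^{2m}$; since $m_\ell$ was maximal, the multiplicities $m_1,\ldots,m_{\ell-1},m_\ell-1$ still lie in a common window $[(k'-1)r+1,\,k'r+1]$ (with $k'=k$, except when every $m_i$ equalled $(k-1)r+1$, in which case $k'=k-1$), so the inner induction hypothesis gives that $(\CA',\mu')$ is free with $\exp(\CA',\mu')=\{c+m_1,\ldots,c+m_{\ell-1},c+m_\ell-1\}$; note that $c$ is unchanged.

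The heart of the argument is the restriction $(\CA'',\nu^*)$ to $H_0\cong\BBC^{\ell-1}$, with coordinates $x_1,\ldots,x_{\ell-1}$: its hyperplanes are the $\ker(x_i)$ and the $\ker(x_i-\zeta x_j)$ for $1\le i<j\le\ell-1$ and $\zeta^r=1$, since $\ker(x_i-\zeta x_\ell)$ restricts onto $\ker(x_i)$ for every $\zeta$ while the remaining hyperplanes restrict to their evident images. For $X=\ker(x_i-\zeta x_j)\cap H_0$ with $i<j\le\ell-1$ one checks directly that $\CA_X=\{H_0,\ker(x_i-\zeta x_j)\}$ has exactly two members, so \Cref{prop:Euler} gives $\nu^*(X)=2m$. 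For $X=\ker(x_i)\cap H_0$ the localization $\CA_X$ consists of $\ker(x_i)$, $\ker(x_\ell)$ and the $r$ hyperplanes $\ker(x_i-\zeta x_\ell)$, with defining polynomial $x_i^{m_i}x_\ell^{m_\ell}(x_i^r-x_\ell^r)^{2m}$; by \Cref{even_cor} its exponents are $\{mr+m_i,\,mr+m_\ell\}$, and the explicit form of the basis in \Cref{prep:exp_gr1l_even} shows, using $m_i\le m_\ell$, that the basis derivation of polynomial degree $mr+m_i$ is the one not lying in $x_\ell\DerS$, whence $\nu^*(X)=mr+m_i$. Thus $(\CA'',\nu^*)$ has defining polynomial $x_1^{mr+m_1}\cdots x_{\ell-1}^{mr+m_{\ell-1}}\prod_{i<j\le\ell-1}(x_i^r-x_j^r)^{2m}$ with every $mr+m_i$ lying in $[(m+k-1)r+1,\,(m+k)r+1]$, so the outer induction hypothesis in rank $\ell-1$ applies and yields $\exp(\CA'',\nu^*)=\{(\ell-2)mr+mr+m_i:1\le i\le\ell-1\}=\{c+m_1,\ldots,c+m_{\ell-1}\}$.

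Finally, $(\CA,\mu)$, $(\CA',\mu')$, $(\CA'',\nu^*)$ is the triple with respect to $H_0$, and with $b_i=c+m_i$ for $i<\ell$ and $b_\ell=c+m_\ell$ the exponents of the deletion and of the restriction are precisely those demanded by \Cref{thm:add-del-multi}; hence $(\CA,\mu)$ is free with $\exp(\CA,\mu)=\{c+m_1,\ldots,c+m_\ell\}$, completing the induction. I expect the genuine obstacle to be the Euler multiplicity computation $\nu^*(\ker(x_i)\cap H_0)=mr+m_i$: knowing only the exponents of the rank-two localization does not suffice, and one must invoke the explicit basis of \Cref{prep:exp_gr1l_even} to decide which of the two basis derivations is divisible by $\alpha_{H_0}=x_\ell$; the window bookkeeping in the deletion step, and the verification that $G(r,r,\ell)$ is well-generated and irreducible when $\ell\ge 3$, are secondary nuisances.
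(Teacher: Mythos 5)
Your proposal is correct and follows essentially the same argument as the paper: an outer induction on $\ell$ with base case \Cref{even_cor}, an inner induction on $\sum m_i$ with base case \Cref{thm:main1} for $G(r,r,\ell)$, the same window adjustment $k\mapsto k-1$ when all $m_i=(k-1)r+1$, and the same triple with respect to $\ker(x_\ell)$ fed into \Cref{thm:add-del-multi}. The only (cosmetic) difference is how the Euler multiplicity $mr+m_i$ at $\ker(x_i)\cap\ker(x_\ell)$ is identified: you inspect the explicit basis of \Cref{prep:exp_gr1l_even} to see which derivation avoids $x_\ell\DerS$, whereas the paper observes via \Cref{even_cor} that decreasing $m_\ell$ moves only the exponent $mr+m_\ell$; both are valid, and your divisibility check does go through (the edge case $m_\ell=(k-1)r+1$ being exactly the one removed by the window adjustment).
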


\begin{proof}
  We argue by induction on~$\ell$.
  Thanks to \Cref{even_cor}, the theorem holds for $\ell=2$.
  Now, suppose $\ell > 2$.
  The proof in this case follows from a further induction on $\sum m_i$. 
  Thanks to \Cref{thm:main1}, the statement of the theorem holds for $m_1 = \ldots = m_\ell = 0$.
  Now let $\sum m_i \neq 0$.
  Without loss, we may assume that~$m_\ell > 0$ is maximal among the~$m_i$.
  We aim to apply \Cref{thm:add-del-multi} with respect to the hyperplane $H_\ell = \ker{x_\ell}$.
  If $m_1 = \cdots = m_\ell = (k-1)r+1$, then, in order to being able to apply the induction hypothesis, requiring the lower bounds on the~$m_i$, we replace~$k$ by~$k-1$.
  Observe that this replacement is valid as, crucially, the exponents do not depend on~$k$.

  The defining polynomial of the deletion with respect to $H_\ell$ is given by
  \[
    Q(\CA',\mu') = x_1^{m_1} \cdots x_{\ell-1}^{m_{\ell-1}}x_\ell^{m_\ell-1} \prod_{1 \le i<j \le \ell} (x_i^r - x_j^r)^{2m}.
  \]

  Now, by induction on $\sum m_i$, $(\CA',\mu')$ is free with exponents
  \[
    \exp(\CA',\mu') = \{c+m_1,\ldots,c+m_{\ell-1},c+m_\ell-1\}.
  \]
  The Euler multiplicity $\mu^*$ on $\CA^{H_\ell}$ is given by
  \[
    Q(\CA^{H_\ell},\mu^*) = x_1^{m_1 + rm} \cdots x_{\ell-1}^{m_{\ell-1} + rm } \prod_{1 \le i<j \le \ell-1} (x_i^r - x_j^r)^{2m}.
  \]
  This can be seen as follows.
  For a hyperplane $H_{ij} = \ker(x_i-x_j)$ ($i,j \not=\ell$) the localization is of size $\vert \CA_{H_\ell \cap H_{ij}}\vert = 2$, hence the Euler multiplicity is~$2m$, by \Cref{prop:Euler}.
  For a hyperplane $H_i = \ker x_i$ ($i \neq \ell$) the localization is given by
  \[
    x_i^{m_i}x_\ell^{m_\ell} (x_i^r-x_\ell^r)^{2m}
  \]
  with exponents $\{rm+m_i, rm+m_\ell\}$, by \Cref{even_cor}. By decreasing $m_\ell$, the second exponent changes, again according to \Cref{even_cor}.
  Hence the Euler multiplicity is $rm+m_i$.

  Now, by induction on~$\ell$, we know that $(\CA^{H_\ell},\mu^*)$ is free, and we compute the exponents as follows.
  The corresponding constant $c^*$ from the statement of the theorem is $c^* = ((\ell-1)-1)mr = c - mr$.
  Hence,
  \[
    \exp(\CA^{H_\ell},\mu^*) = \{c^*+m_1+mr,\ldots,c^*+m_{\ell-1}+mr\} = \{c + m_1, \ldots, c+m_{\ell-1}\}.
  \]
  The theorem now follows by \Cref{thm:add-del-multi}.
\end{proof}

Note that \Cref{thm:main2strong}\eqref{eq:main2strong1} follows from \Cref{even_theorem}.

\subsection{Proof of \Cref{thm:main2strong}\eqref{eq:main2strong2}}
\label{sec:thmmain2strong2}

The derivations are constructed in a similar way as in the previous case.
Hence we construct a polynomial whose coefficients are the solution of a system of linear equations which depend on several indeterminates.
The key observation in the previous case was the regularity of a matrix whose entries consist of certain binomial coefficients.
It turns out that in the present case the entries of the matrix consist of differences of certain binomial coefficients.
The application of the following technical lemma in the present situation was communicated to us by Christian Krattenthaler.

\begin{lemma}[{\cite[Lem.~7]{krattenthaler:determinantcalculus}}]
\label{prep:detlemma}
  Let $X_1,X_2,\ldots, X_m$, $A_2,A_3,\ldots,A_m,C$ be indeterminates, and let $p_0,p_1,\ldots,p_{m-1}$ 
  be polynomials in a single variable such that $\deg p_j \le 2j$ and $p_j(X) = p_j(C-X)$ for $j=0,1,\ldots,m-1$.
  Then,
    \begin{multline*}
  \det_{1\le i,j\le m} \left(\prod_{k=j+1}^m \Big((X_i-A_k)(X_i-A_k-C)\Big) \cdot p_{j-1}(X_i)\right)\\
    = \prod_{1\le i <j \le m} (X_j-X_i)(C-X_i-X_j) \prod_{i=1}^m p_{i-1}(-A_i).
  \end{multline*}
\end{lemma}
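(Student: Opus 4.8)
\emph{Overall approach.} The plan is to establish this determinant evaluation by the standard \emph{identification of factors} technique. I regard the left-hand side $D$ as a polynomial in the variables $X_1,\dots,X_m$, with $A_2,\dots,A_m$, $C$ and the coefficients of the $p_j$ treated as parameters, and match it against the right-hand side factor by factor. Write $\psi_j(X)$ for the polynomial appearing in the $j$-th column, so that the $(i,j)$-entry of the matrix is $\psi_j(X_i)$. From $\deg p_{j-1}\le 2(j-1)$ and the fact that the accompanying product contributes $m-j$ quadratic factors in $X$, one gets $\deg\psi_j\le 2(m-j)+2(j-1)=2m-2$. Expanding $D=\sum_\sigma\operatorname{sgn}(\sigma)\prod_{i}\psi_{\sigma(i)}(X_i)$, the variable $X_i$ occurs in each summand only through the single factor $\psi_{\sigma(i)}(X_i)$, so $\deg_{X_i}D\le 2m-2$ for every $i$.

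\emph{Identifying the linear factors.} Each entry $\psi_j(X)$ is invariant under the substitution $X\mapsto C-X$: the hypothesis $p_{j-1}(X)=p_{j-1}(C-X)$ handles the $p$-part, while the product of quadratics is invariant by a direct check, each quadratic block being symmetric about $C/2$. Hence rows $i$ and $j$ of the matrix coincide whenever $X_i=X_j$, and coincide again whenever $X_i+X_j=C$ (the $j$-th row then being the evaluation of the $i$-th row at $C-X_i$). Consequently $X_j-X_i$ and $C-X_i-X_j$ both divide $D$ for all $1\le i<j\le m$. These $2\binom m2$ linear polynomials are pairwise non-associate irreducibles, so their product $P\eqdef\prod_{1\le i<j\le m}(X_j-X_i)(C-X_i-X_j)$ divides $D$. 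Since $\deg_{X_i}P=2(m-1)$, which equals the bound just obtained for $\deg_{X_i}D$, the cofactor $E\eqdef D/P$ has nonpositive degree in each $X_i$; hence $E$ is a polynomial free of all the $X_i$, and it is moreover multilinear in $p_0,\dots,p_{m-1}$.

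\emph{Computing $E$ by induction on $m$.} For $m=1$ the determinant is the constant $p_0(X_1)=p_0(-A_1)$ and there is nothing to prove. For $m>1$, I would specialize $X_m$ to a zero --- lying in row $m$ --- of the quadratic block attached to $A_m$; then every entry of row $m$ vanishes except the last, which is a value of $p_{m-1}$. A cofactor expansion along row $m$ reduces $D$ at this specialization to that value of $p_{m-1}$ times the minor on rows and columns $1,\dots,m-1$; pulling the common quadratic factor attached to $A_m$ out of each of its rows exhibits this minor, up to that product of quadratics, as the determinant of exactly the same shape for $m-1$, with indeterminates $X_1,\dots,X_{m-1}$, $A_2,\dots,A_{m-1}$, $C$ and polynomials $p_0,\dots,p_{m-2}$. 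Applying the induction hypothesis and comparing with $D=P\cdot E$ evaluated at the same specialization --- where the factor $\prod_{i<m}(X_m-X_i)(C-X_i-X_m)$ becomes precisely that product of quadratics --- all $X$-dependent terms cancel, leaving $E=\prod_{i=1}^m p_{i-1}(-A_i)$. As $E$ is $X$-free, this holds identically, which is the assertion.

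\emph{Main obstacle.} The delicate part, as always for such evaluations, is the factorization step: the degree bound on $D$ must be sharp enough to force $E$ to be independent of the $X_i$, and one must be certain that both families of linear factors genuinely occur. The $C-X_i-X_j$ factors rest entirely on the symmetry hypothesis $p_j(X)=p_j(C-X)$ in tandem with the matching symmetry of the quadratic blocks about $C/2$, and it is precisely this symmetry that also makes one full row collapse under the specialization used in the induction; getting these compatibility points right is the heart of the argument, after which the induction is routine bookkeeping.
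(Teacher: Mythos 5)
The paper gives no proof of this lemma at all --- it is quoted verbatim (with attribution) from Krattenthaler's \emph{Advanced determinant calculus} --- so there is no in-paper argument to compare with; your identification-of-factors strategy plus induction is indeed the standard way such evaluations are established. However, your argument has a genuine gap at exactly the point you single out as ``the heart of the argument'': the claim that each quadratic block $(X-A_k)(X-A_k-C)$ is symmetric under $X\mapsto C-X$ is false. That quadratic has roots $A_k$ and $A_k+C$, hence axis of symmetry $A_k+\tfrac{C}{2}$, not $\tfrac{C}{2}$; under $X\mapsto C-X$ it becomes $(X+A_k)(X+A_k-C)$, which is a different polynomial. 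So rows $i$ and $j$ do \emph{not} coincide when $X_i+X_j=C$, the factors $C-X_i-X_j$ are not forced to divide $D$, and the induction step (specializing $X_m$ to a root of the block attached to $A_m$) no longer collapses row $m$ in the way you need. In fact the identity \emph{as printed} is false: for $m=2$, $p_0=1$, $p_1(X)=X(X-C)$ and $(X_1,X_2,A_2,C)=(1,2,3,4)$ the determinant equals $-33$ while the right-hand side equals $21$.

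The source of the trouble is a sign typo in the statement rather than in your overall plan: in Krattenthaler's Lemma~7 the quadratic blocks are $(X_i+A_k)(X_i-A_k-C)$, whose roots $-A_k$ and $A_k+C$ \emph{are} interchanged by $X\mapsto C-X$ --- note this is the only reading consistent with the evaluation points $p_{i-1}(-A_i)$ on the right-hand side, and with the same numerical test, which then gives $21=21$. With that correction your proof goes through essentially verbatim: every entry is invariant under $X_i\mapsto C-X_i$, the degree count $\deg_{X_i}D\le 2m-2=\deg_{X_i}P$ forces the cofactor to be $X$-free, and specializing $X_m=-A_m$ annihilates all of row $m$ except the entry $p_{m-1}(-A_m)$ and matches the surviving factors of $P$, completing the induction. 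You should therefore either correct the statement or, if you insist on proving it as printed, recognize that no proof exists. (For the paper's purposes the typo is harmless, since only the top-coefficient consequence $\det_{1\le i,j\le m}\bigl(p_{j-1}(X_i)\bigr)=\prod_{i<j}(X_i-X_j)(C-X_i-X_j)\prod_i q_{i-1}$ is used, and that identity is correct --- it is just a Vandermonde determinant in the invariants $\sigma_i=X_i(X_i-C)$.)
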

Comparing the coefficient of $A_m^{2m-2}A_{m-1}^{2m-4}\cdots A_1^0$ in the identity in the previous lemma, we obtain
\begin{equation}
\det_{1\le i,j \le m} \big(p_{j-1}(X_i)\big) = \prod_{1\le i <j \le m} (X_i-X_j)(C-X_i-X_j) \prod_{i=1}^m q_{i-1}
\label{eq:krattenthalerspecial}
\end{equation}
where~$q_j$ is the leading coefficient of $p_j(X)$.

Utilizing~\eqref{eq:krattenthalerspecial}, we obain the following consequence.

\begin{corollary}
\label{detcorollary}
  Let $A,B,m \in \BBN$ such that $A-B \not\equiv 0 \mod r$, and set $C=\frac{B-A}{r}$, $X_i = i-1$,
  and
  \[
    p_j(X) = \frac{1}{A-B+2rX} \Bigg(\binom{A+rX}{2j+1} - \binom{B-rX}{2j+1} \Bigg) \in \BBQ[X].
  \]
  Then the left-hand side of~\eqref{eq:krattenthalerspecial} specializes to
  \[ 
  \det_{0 \le i,j \le m-1} \Bigg(\binom{A+ri}{2j+1} - \binom{B-ri}{2j+1}\Bigg),
  \]
  which is not identically zero.
\end{corollary}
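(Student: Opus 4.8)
\emph{Proof proposal.} The plan is to identify the determinant $\det_{0\le i,j\le m-1}\big(\tbinom{A+ri}{2j+1}-\tbinom{B-ri}{2j+1}\big)$ with a nonzero rational multiple of the left-hand side of~\eqref{eq:krattenthalerspecial} applied to the polynomials $p_0,\dots,p_{m-1}$ of the statement, after checking that these $p_j$ satisfy the hypotheses of \Cref{prep:detlemma}, and then to read off nonvanishing from the product on the right-hand side of~\eqref{eq:krattenthalerspecial}.

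First I would confirm that each $p_j$ is genuinely a polynomial, of degree $2j$. Regarding $\tbinom{A+rX}{2j+1}$ and $\tbinom{B-rX}{2j+1}$ as elements of $\BBQ[X]$ of degree $2j+1$ via the falling-factorial convention, their difference vanishes at $X=\tfrac{B-A}{2r}$, since there $A+rX=B-rX=\tfrac{A+B}{2}$; as $\tfrac{B-A}{2r}$ is exactly the root of the linear polynomial $A-B+2rX$, the factor theorem over~$\BBQ$ shows that $A-B+2rX$ divides the numerator, so $p_j\in\BBQ[X]$ with $\deg p_j=2j$, and a quick leading-term computation gives that its leading coefficient is $q_j=r^{2j}/(2j+1)!\neq0$. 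Next I would verify the palindromic symmetry: putting $C=\tfrac{B-A}{r}$, so that $rC=B-A$, the substitution $X\mapsto C-X$ interchanges $A+rX$ and $B-rX$ and negates $A-B+2rX$, and the two sign changes cancel, giving $p_j(X)=p_j(C-X)$. This is precisely why the corollary takes $C=\tfrac{B-A}{r}$.

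With the hypotheses of \Cref{prep:detlemma} in place, \eqref{eq:krattenthalerspecial} applies, and after setting $X_i=i-1$ it reads
\[
  \det_{1\le i,j\le m}\!\big(p_{j-1}(X_i)\big)=\prod_{1\le i<j\le m}(X_i-X_j)(C-X_i-X_j)\cdot\prod_{i=1}^{m}q_{i-1},\qquad X_i=i-1 .
\]
I would then observe that the right-hand side is a nonzero rational number: $X_i-X_j=i-j\neq0$, $q_{i-1}=r^{2(i-1)}/(2i-1)!\neq0$, and $C-X_i-X_j=\tfrac{B-A}{r}-(i+j-2)\neq0$ because the hypothesis $r\nmid A-B$ forces $\tfrac{B-A}{r}\notin\BBZ$ while $i+j-2\in\BBZ$. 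This single use of the congruence condition is the crux of the argument; everything else is bookkeeping. The only point requiring a moment's care is that~\eqref{eq:krattenthalerspecial} is being applied with $C$ instantiated to a fixed non-integral rational rather than a free indeterminate, which is legitimate since our $p_j$ satisfy $p_j(X)=p_j(C-X)$ identically in $X$ for that value of~$C$, which is all the lemma needs.

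Finally, re-indexing $i,j$ by $i-1,j-1$ and pulling the common factor $1/(A-B+2ri)$ out of the $i$-th row for $i=0,\dots,m-1$ gives
\[
  \det_{0\le i,j\le m-1}\!\Big(\tbinom{A+ri}{2j+1}-\tbinom{B-ri}{2j+1}\Big)=\Big(\prod_{i=0}^{m-1}(A-B+2ri)\Big)\,\det_{0\le i,j\le m-1}\!\big(p_j(i)\big).
\]
Since $A-B+2ri\equiv A-B\pmod r$ and $r\nmid A-B$, every factor $A-B+2ri$ is nonzero, so the target binomial determinant is a nonzero rational multiple of the left-hand side of~\eqref{eq:krattenthalerspecial} and is therefore not identically zero, as asserted.
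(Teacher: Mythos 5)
Your proposal is correct and is exactly the derivation the paper intends (the paper states the corollary as an immediate consequence of~\eqref{eq:krattenthalerspecial} without writing out the details): you verify that $A-B+2rX$ divides the difference of binomials so that $p_j\in\BBQ[X]$ with $\deg p_j=2j$ and leading coefficient $r^{2j}/(2j+1)!$, check the symmetry $p_j(X)=p_j(C-X)$ for $C=\tfrac{B-A}{r}$, and use $r\nmid A-B$ to see that both the factors $C-X_i-X_j$ and the row factors $A-B+2ri$ are nonzero. All steps check out.
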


We use this corollary repeatedly in the subsequent lemma.

\begin{lemma}
\label{prep:exp_gr1l_odd}
  Let $m \ge 1$ and $k \ge 0$.
  Define $(\CA,\mu)$ by
  \[
    Q(\CA,\mu) = x^{kr+1}y^{kr+1} (x^r-y^r)^{2m+1}.
  \]
  Then $(\CA,\mu)$ is free with
  \[
    \exp(\CA,\mu) = \big\{(m+k)r+1,(m+k+1)r+1\big\}.
  \]
  Moreover, there are polynomials $q_1,q_2 \in \BBZ[x,y]$ of degree~$m$ such 
  that
  \begin{enumerate}[(i)]
    \item the coefficients of $x^m$ and $y^m$ in $q_1,q_2$ are non-zero and 
    \item the homogeneous derivations
      \begin{align*}
      \theta_1 &\eqdef x^{kr+1} q_1(x^r,y^r) \partial_{x} + y^{kr+1} q_1(y^r,x^r) \partial_{y}\\
      \theta_2 &\eqdef x^{kr+1} y^r q_2(x^r,y^r) \partial_{x} + y^{kr+1} x^r q_2(y^r,x^r) \partial_{y}\\
      \end{align*}
      form a basis of $\FD(\CA,\mu)$.
  \end{enumerate}
\end{lemma}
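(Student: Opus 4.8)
The plan is to follow the even case \Cref{prep:exp_gr1l_even} closely, replacing the appeal to Gessel--Viennot by the determinant evaluation \Cref{detcorollary}, which itself rests on Krattenthaler's \Cref{prep:detlemma}. Write $q_1 = \sum_{i=0}^{m} a_i x^i y^{m-i}$ and $q_2 = \sum_{i=0}^{m} c_i x^i y^{m-i}$ with undetermined coefficients and form $\theta_1,\theta_2$ as in the statement; they are homogeneous with $\pdeg\theta_1 = (m+k)r+1$ and $\pdeg\theta_2 = (m+k+1)r+1$. The polynomials $P_1 \eqdef \theta_1(x-y)$ and $P_2 \eqdef \theta_2(x-y)$ satisfy $P_i(y,x) = -P_i(x,y)$, as one checks directly, so they are divisible by $x-y$. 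A short computation shows $\theta_i(x-\zeta y) = P_i(x,\zeta y)$ whenever $\zeta^r = 1$ (using $\zeta^{kr+1} = \zeta$), so that $P_i\in(x-y)^{2m+1}S$ already forces $\theta_i(x-\zeta y)\in(x-\zeta y)^{2m+1}S$ for all such $\zeta$; the two coordinate hyperplanes are handled automatically by the explicit factors $x^{kr+1}$ and $y^{kr+1}$ in $\theta_i(x)$ and $\theta_i(y)$. Thus it suffices to arrange $P_1,P_2\in(x-y)^{2m+1}S$ and then to check that $\theta_1,\theta_2$ are linearly independent over $S$.

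The second step turns $P_1\in(x-y)^{2m+1}S$ into linear conditions on the $a_i$. Since a homogeneous symmetric polynomial is a polynomial in $x+y$ and $(x-y)^2$, the antisymmetry of $P_1$ shows that $(x-y)^{2m+1}\mid P_1$ is equivalent to the vanishing of the odd-order derivatives $\partial_x^{2j+1}P_1\big|_{x=y}$ for $j=0,\dots,m-1$ (the even-order ones being implied by these), that is, to the homogeneous linear system
\[
  \sum_{i=0}^{m} a_i\left(\binom{(i+k)r+1}{2j+1} - \binom{(m-i)r}{2j+1}\right) = 0, \qquad j = 0, 1, \dots, m-1,
\]
of $m$ equations in the $m+1$ integer unknowns $a_0,\dots,a_m$; a nonzero solution with $a_i\in\BBZ$ therefore exists. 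The same computation for $P_2$ produces $m$ equations with $\binom{(m-i)r}{2j+1}$ replaced by $\binom{(m-i+1)r}{2j+1}$, hence a nonzero integer solution $q_2$. Note the parallel with the even case, where $P$ is not antisymmetric and one instead gets $2m$ equations in $2m+1$ unknowns.

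The third step is to show that $a_m,a_0,c_m,c_0$ are all nonzero, which is where \Cref{detcorollary} enters. If $a_m = 0$, then $q_1$ is a nonzero solution of the $m\times m$ homogeneous system with matrix $\big(\binom{(i+k)r+1}{2j+1}-\binom{(m-i)r}{2j+1}\big)_{0\le i,j\le m-1}$, which is precisely the determinant of \Cref{detcorollary} for $A=kr+1$, $B=mr$; as $A-B\equiv 1\not\equiv 0\pmod r$, this determinant is nonzero, forcing $q_1 = 0$, a contradiction. The cases $a_0=0$, $c_m=0$, $c_0=0$ are disposed of identically after the obvious reindexing, with $(A,B)$ equal to $\big((k+1)r+1,(m-1)r\big)$, $\big(kr+1,(m+1)r\big)$, $\big((k+1)r+1,mr\big)$ respectively; in each case $A-B\not\equiv 0\pmod r$, so \Cref{detcorollary} applies. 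Finally, with these four coefficients nonzero,
\[
  \det M(\theta_1,\theta_2) = x^{kr+1}y^{kr+1}\Big(x^r q_1(x^r,y^r)q_2(y^r,x^r) - y^r q_1(y^r,x^r)q_2(x^r,y^r)\Big),
\]
and setting $y=0$ in the parenthesized factor leaves $a_m c_0\, x^{(2m+1)r}\neq 0$, so $\det M(\theta_1,\theta_2)\neq 0$ and $\theta_1,\theta_2$ are independent over $S$. Since they are homogeneous with $\pdeg\theta_1+\pdeg\theta_2 = (2m+2k+1)r+2 = |\mu|$, \Cref{thm:zieglersaito}\eqref{eq:zieglersaito3} shows $\{\theta_1,\theta_2\}$ is a basis of $\FD(\CA,\mu)$, so $(\CA,\mu)$ is free with exponents $\{(m+k)r+1,(m+k+1)r+1\}$.

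I expect the main obstacle to be precisely this third step: identifying the four parameter specialisations so that the hypotheses of \Cref{detcorollary} (ultimately \Cref{prep:detlemma}) are met, which is the odd-case substitute for the Gessel--Viennot argument. A secondary point that needs care is the antisymmetry bookkeeping in the second step, which is what halves the number of linear conditions from $2m+1$ to $m$ and is responsible for the two exponents differing by $r$ rather than coinciding.
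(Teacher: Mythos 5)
Your proposal is correct and follows essentially the same route as the paper's proof: the same ansatz for $q_1,q_2$, the same reduction via antisymmetry of $\theta_i(x-y)$ to $m$ odd-derivative conditions in $m+1$ unknowns, the same four specialisations $(A,B)=(kr+1,mr)$, $((k+1)r+1,(m-1)r)$, $(kr+1,(m+1)r)$, $((k+1)r+1,mr)$ of \Cref{detcorollary} to rule out vanishing of the extreme coefficients, and the same Saito--Ziegler degree count. The only cosmetic difference is that you verify $\det M(\theta_1,\theta_2)\neq 0$ by evaluating at $y=0$, whereas the paper compares the $y$-adic orders of the two terms; both rest on the same non-vanishing of coefficients.
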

\begin{proof}
  We aim to define~$q_1$ such that $\theta_1(x-y) \in (x-y)^{2m+1}S$.
  Let~$q_1 = \sum_{i=0}^m a_i x^i y^{m-i}$ with~$a_i \in \BBQ$.
  We require that
  \[
    P(x,y) := \theta_1(x-y) = \sum_{i=0}^m a_i \left( x^{(i+k)r+1}y^{(m-i)r} - x^{(m-i)r}y^{(i+k)r+1}\right) \in (x-y)^{2m+1}S.
  \]
  Hence, the coefficients $a_0,\ldots,a_m$ form a solution of the following system of linear equations over~$\BBQ$
  \begin{equation}
    \label{eq:matrixequation1}
    \frac{1}{j!}\left(\left(\frac{d}{dx}\right)^{j} P\right)(x,x) = 0    
  \end{equation}
  for $j=0,\ldots,2m$.
  Since $P(x,y) = - P(y,x)$, the identity~\eqref{eq:matrixequation1} holds for a given even~$j$, provided it holds for all~$j'$ with $0 \leq j' < j$.
  (In particular, it holds for $j=0$.)

  Because we have $m+1$ variables and~$m$ equations, the system has a non-trivial solution.
  We may choose one such non-trivial solution with coefficients in~$\BBZ$.

  Suppose $a_m = 0$.
  Then we may remove the last column of the matrix in~\eqref{eq:matrixequation1}.
  The determinant of this matrix equals
  \begin{equation*}
    \det_{0\le i,j \le m-1} \Bigg(\binom{(i+k)r+1}{2j+1} - \binom{(m-i)r}{2j+1}\Bigg)
  \end{equation*}
  which is not identically zero, thanks to \Cref{detcorollary} for the parameters $A= kr+1$ and $B=mr$. 
  Hence,~\eqref{eq:matrixequation1} only admits the trivial solution, contradicting the above choice of a non-trivial solution.

  \medskip

  Suppose $a_0 = 0$.
  Then we may remove the first column of the matrix in~\eqref{eq:matrixequation1}.
  Its determinant equals, after substituting~$i$ by~$i+1$, the determinant
  \begin{equation*}
    \det_{0\le i,j \le m-1} \Bigg(\binom{(i+k+1)r+1}{2j+1} - \binom{(m-i-1)r}{2j+1}\Bigg)
  \end{equation*}
  which is not identically zero, again thanks to \Cref{detcorollary} for the parameters $A= (k+1)r+1$ and $B=(m-1)r$.
  Hence, we obtain an analogous contradiction as in the previous case.

  \medskip

  Next, we check that $\theta_1 \in \FD(\CA,\mu)$.
  By construction, $\theta_1(x) \in x^{kr+1}S$, $\theta_1(y) \in y^{kr+1}S$ and $\theta_1(x-y) \in (x-y)^{2m+1}S$.
  Then, for~$\zeta$ an $r$-th root of unity, we have
  \begin{align*}
  \theta_1(x-\zeta y) &= x^{kr+1} q_1(x^r,y^r) - \zeta y^{kr+1}q_1(y^r,x^r)\\
                      &= x^{kr+1} q_1(x^r,(\zeta y)^r) - (\zeta y)^{kr+1}q_1((\zeta y)^r,x^r)\\
                      &= P(x,\zeta y) \in (x-\zeta y)^{2m+1} S.
  \end{align*}

  We aim to define~$q_2$ such that $\theta_2(x-y) \in (x-y)^{2m+1}S$.
  Let $q_2 = \sum_{i=0}^m b_i x^i y^{m-i}$ with $a_i \in \BBQ$.
  We require that
  \[
    \tilde P(x,y) := \theta_2(x-y) = \sum_{i=0}^m b_i \left( x^{(i+k)r+1}y^{(m-i+1)r} - x^{(m-i+1)r}y^{(k+i)r+1}\right) \in (x-y)^{2m+1}S.
  \]
  Hence the coefficients of~$q_2$ are the solutions of the system of equations given by
  \begin{equation}
    \label{eq:matrixequation2}
    \frac{1}{j!}\left(\left(\frac{d}{dx}\right)^{j} \tilde P\right)(x,x) = 0
  \end{equation}
  for $j=0,\ldots,2m$.
  As above, since $\tilde P (x,y) = - \tilde P (y,x)$, we again observe that the equation holds for a given even~$j$, provided it holds for all~$j'$ with $0 \leq j' < j$.

  Because we have $m+1$ variables and~$m$ equations, the system has a non-trivial solution.
  We may choose one such non-trivial solution with coefficients in~$\BBZ$.

  Suppose $b_m = 0$.
  Then we may remove the last column of the matrix in~\eqref{eq:matrixequation2}.
  The determinant of this matrix equals
  \[
    \det_{0\le i,j \le m-1} \Bigg(\binom{(i+k)r+1}{2j+1} - \binom{(m-i+1)r}{2j+1}\Bigg)
  \]
  which is not identically zero, thanks to \Cref{detcorollary} for the parameters $A= kr+1$ and $B=(m+1)r$.
  Hence,~\eqref{eq:matrixequation2} only admits the trivial solution, contradicting the above choice of a non-trivial solution.

  Suppose $b_0 = 0$.
  Then we may remove the first column of the matrix in~\eqref{eq:matrixequation2}.
  Its determinant equals, after substituting~$i$ by~$i+1$, the determinant
  \[
    \det_{0\le i,j \le m-1} \Bigg(\binom{(i+k+1)r+1}{2j+1} - \binom{(m-i)r}{2j+1}\Bigg)
  \]
  which is not identically zero, again thanks to \Cref{detcorollary} for the parameters $A= (k+1)r+1$ and $B=mr$.
  Hence, we obtain an analogous contradiction as in the previous case.

  \medskip

  Finally, we check that $\theta_2 \in \FD(\CA,\mu)$.
  By construction, $\theta_2(x) \in x^{kr+1}S, \theta_2(y) \in y^{kr+1}S$ and $\theta_2(x-y) \in (x-y)^{2m+1}S$.
  Then, for~$\zeta$ an $r$-th root of unity, we have
  \begin{align*}
  \theta_2(x-\zeta y) &= x^{kr+1} y^r q_2(x^r,y^r) - \zeta y^{kr+1}x^rq_2(y^r,x^r)\\
                      &= x^{kr+1} (\zeta y)^rq_2(x^r,(\zeta y)^r) - (\zeta y)^{kr+1} x^r q_2((\zeta y)^r,x^r)\\
                      &= \tilde P (x,\zeta y) \in (x-\zeta y)^{2m+1} S.\\
  \end{align*} 
  Hence $\theta_1,\theta_2 \in \FD(\CA,\mu)$.
  Observe that
  \[
    \det M(\theta_1,\theta_2) = x^{(k+1)r+1} y^{kr+1} q_1(x^r,y^r)q_2(y^r,x^r)- y^{(k+1)r+1} x^{kr+1} q_1(y^r,x^r)q_2(x^r,y^r).
  \]
  This determinant has degree $\vert\mu\vert$.
  Since~$q_1$ and~$q_2$ are not divisible by~$x$ and~$y$, respectively, the determinant is non-zero. 
  Thus $\theta_1$ and $\theta_2$ are independent over $S$. 
  Since $\theta_1$ and $\theta_2$ are homogeneous and $\pdeg \theta_1+ \pdeg\theta_2 = |\mu|$, 
  it follows from \Cref{thm:zieglersaito}\eqref{eq:zieglersaito3} that 
  $\{\theta_1,\theta_2\}$ is a basis of $\FD(\CA,\mu)$).
\end{proof}

\begin{corollary}
\label{odd_cor}
  Let $m \ge 1$ and $k \ge 0$.
  Let $0 \le \tilde m_1, \tilde m_2 \le r$ and set $m_i \eqdef (k-1)r+1 + \tilde m_i$ for $i = 1,2$ such that $m_1,m_2 \ge 0$.
  Define $(\CA,\mu)$ by
  \[
    Q(\CA,\mu) = x^{m_1}y^{m_2} (x^r-y^r)^{2m+1}.
  \]
  Let $c \eqdef (k-1+m)r +1$.
    Then $(\CA,\mu)$ is free with
  \[
    \exp(\CA,\mu) = \{c + \tilde m_1 + \tilde m_2,\ c+r \}.
  \]
\end{corollary}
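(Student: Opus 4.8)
The arrangement $\CA$ has rank~$2$, so $\FD(\CA,\mu)$ is free by \cite[Cor.~7]{ziegler:multiarrangements}, and it suffices to produce two homogeneous $S$-independent derivations in $\FD(\CA,\mu)$ of polynomial degrees $c+\tilde m_1+\tilde m_2$ and $c+r$; that their degrees sum to $|\mu|=m_1+m_2+r(2m+1)$ is a direct check, so \Cref{thm:zieglersaito} then finishes the argument. My plan for the main case $k\ge 1$ is to realize $\FD(\CA,\mu)$ as an explicit submodule of the free module supplied by \Cref{prep:exp_gr1l_odd}. Put $k_0:=k-1\ge 0$, let $\mu_0$ be the multiplicity with $Q(\CA,\mu_0)=x^{k_0r+1}y^{k_0r+1}(x^r-y^r)^{2m+1}$, and let $\theta_1,\theta_2$ be the basis of $\FD(\CA,\mu_0)$ from \Cref{prep:exp_gr1l_odd} applied with parameter $k_0$ in place of~$k$, so that $\pdeg\theta_1=c$, $\pdeg\theta_2=c+r$, and $\theta_1,\theta_2$ have the shape recorded there in terms of the degree-$m$ polynomials $q_1,q_2$. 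Write $a_0,b_0$ for the coefficients of $y^m$ in $q_1,q_2$ — non-zero by \Cref{prep:exp_gr1l_odd}(i) — and set $\lambda:=-b_0/a_0$. Since $m_i=k_0r+1+\tilde m_i\ge k_0r+1$ we have $\FD(\CA,\mu)\subseteq\FD(\CA,\mu_0)=S\theta_1\oplus S\theta_2$, and for $\theta=g_1\theta_1+g_2\theta_2$ the only conditions beyond membership in $\FD(\CA,\mu_0)$ are $x^{m_1}\mid\theta(x)$ and $y^{m_2}\mid\theta(y)$; using $\tilde m_i\le r$ together with $q_1(x^r,y^r)\equiv a_0y^{rm}$ and $q_2(x^r,y^r)\equiv b_0y^{rm}\pmod{x^r}$ (and the symmetric congruences modulo $y^r$), these collapse to
\[
  g_1\equiv\lambda y^r g_2\pmod{x^{\tilde m_1}}\qquad\text{and}\qquad g_1\equiv\lambda x^r g_2\pmod{y^{\tilde m_2}}.
\]

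From this I would read off the basis directly. Set
\[
  \Theta_1:=\lambda(x^r+y^r)\,\theta_1+\theta_2,\qquad \Theta_2:=x^{\tilde m_1}y^{\tilde m_2}\,\theta_1.
\]
A short verification shows $\Theta_1,\Theta_2\in\FD(\CA,\mu)$: for $\Theta_1$ one uses $\lambda a_0+b_0=0$ together with $x^r\equiv 0\pmod{x^{\tilde m_1}}$ and $y^r\equiv 0\pmod{y^{\tilde m_2}}$, and for $\Theta_2$ it is immediate from the shape of $\theta_1$. Their polynomial degrees are $c+r$ and $c+\tilde m_1+\tilde m_2$, and
\[
  \det M(\Theta_1,\Theta_2)=-x^{\tilde m_1}y^{\tilde m_2}\det M(\theta_1,\theta_2)\doteq x^{\tilde m_1}y^{\tilde m_2}\,Q(\CA,\mu_0)=Q(\CA,\mu)\neq 0,
\]
so $\{\Theta_1,\Theta_2\}$ is an $S$-basis of $\FD(\CA,\mu)$ by \Cref{thm:zieglersaito}, giving $\exp(\CA,\mu)=\{c+\tilde m_1+\tilde m_2,\,c+r\}$ in this case.

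For $k=0$ the hypothesis $m_i\ge 0$ forces $\tilde m_i\in\{r-1,r\}$, so $m_i\in\{0,1\}$ and $c=(m-1)r+1$; here $\FD(\CA,\mu)$ is no longer contained in a free module of the previous type, and I would split into sub-cases. If $m_1=m_2=1$ the statement is exactly \Cref{prep:exp_gr1l_odd} with $k=0$. If $(m_1,m_2)=(1,0)$ — and symmetrically $(0,1)$ — take $\theta_1,\theta_2$ from \Cref{prep:exp_gr1l_odd} with $k=0$; then $y$ divides $\theta_2$ as a derivation, and since $y$ is coprime to each linear form $x-\zeta y$ one checks $\theta_1,\theta_2/y\in\FD(\CA,\mu)$, of degrees $mr+1$ and $(m+1)r$, with $\det M(\theta_1,\theta_2/y)\doteq Q(\CA,\mu)\neq 0$, so \Cref{thm:zieglersaito} applies. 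Finally, for $(m_1,m_2)=(0,0)$ I would invoke \Cref{thm:add-del-multi} once more, for the triple at $H_0=\ker x$ with the just-settled case $(1,0)$ as the ``bigger'' member: the Euler multiplicity of the rank-$1$ restriction equals the polynomial degree of the basis derivation not divisible by $x$, namely $mr+1$, and the theorem yields $\exp(\CA,\mu)=\{mr+1,(m+1)r-1\}$, which matches the claim. (This last case is also the reflection multi-arrangement of the dihedral group $I_2(r)=G(r,r,2)$ with multiplicity $m\omega+1$, so \Cref{thm:main1}\eqref{eq:main12} gives an alternative.)

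The heart of the matter is the $k\ge 1$ reduction to the two displayed congruences, from which the explicit basis $\{\Theta_1,\Theta_2\}$ drops out; I expect this to be the only step requiring genuine thought, the $k=0$ sub-cases being more hands-on but elementary and the degree-sum bookkeeping throughout being immediate.
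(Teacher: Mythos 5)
Your proof is correct, and in fact it is considerably more complete than the paper's own one-line argument. The paper simply asserts that $\tilde\theta_1=\theta_1$ and $\tilde\theta_2=\theta_2/(x^ry^r)$ (resp.\ $\theta_2/(xy)$ when $k=0$), with $\theta_1,\theta_2$ as in \Cref{prep:exp_gr1l_odd}, form a basis of $\FD(\CA,\mu)$; taken literally this only works at the extreme multiplicities, since for $\tilde m_1>0$ the coefficient $\theta_2(x)/(x^ry^r)=x^{(k-1)r+1}q_2(x^r,y^r)$ fails to be divisible by $x^{m_1}$ (precisely because $b_0\neq 0$), and the degrees of the asserted basis do not depend on $\tilde m_1,\tilde m_2$ at all. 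Your argument supplies exactly the missing mechanism for the intermediate values: embedding $\FD(\CA,\mu)$ into the free module $\FD(\CA,\mu_0)$ at level $k-1$, converting the two extra divisibility conditions into the congruences on $(g_1,g_2)$ modulo $x^{\tilde m_1}$ and $y^{\tilde m_2}$, and reading off the basis $\{\lambda(x^r+y^r)\theta_1+\theta_2,\ x^{\tilde m_1}y^{\tilde m_2}\theta_1\}$, with the non-vanishing of $a_0,b_0$ from \Cref{prep:exp_gr1l_odd}(i) doing the real work; the determinant and degree bookkeeping then close the argument via \Cref{thm:zieglersaito}. Your three $k=0$ sub-cases (the lemma itself, division by $y$, and either addition--deletion or \Cref{thm:main1}\eqref{eq:main12} for $G(r,r,2)$) are likewise sound. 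The paper's shortcut buys brevity at the endpoint multiplicities; yours buys a proof of the full statement, which is what the Euler-multiplicity computation in \Cref{odd_theorem} actually requires.
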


\begin{proof}
  A basis $\{\tilde\theta_1,\tilde\theta_2\}$ of $\FD(\CA,\mu)$ is given by $\tilde\theta_1 = \theta_1$ and
  \[
    \tilde\theta_2 = \begin{cases}
                       \theta_2 / xy     & \text{ if } k=0, \\
                       \theta_2 / x^ry^r & \text{ if } k>0,
                     \end{cases}
  \]
  where $\theta_1$ and $\theta_2$ are given as in \Cref{prep:exp_gr1l_odd}.
\end{proof}

\begin{theorem}
\label{odd_theorem}
  Let $k,m \ge 0$ and $0 \le \tilde m_1,\ldots,\tilde m_\ell \le r$.
  Set $m_i \eqdef (k-1)r+1+\tilde m_i$ for $i \in \{1,\ldots,\ell\}$ such that $m_i \ge 0$, and set $c \eqdef ((\ell-1)m + k-1)r +1$.

  Define $(\CA,\mu)$ by
  \[
    Q(\CA,\mu) = x_1^{m_1}\cdots x_\ell^{m_\ell} \prod_{1 \le i < j \le \ell} (x_i^r-x_j^r)^{2m+1}.
  \]
  Then $(\CA,\mu)$ is free with 
  \[
    \exp(\CA,\mu) = \Big\{ c+\sum_{i=1}^\ell \tilde m_i,\ c+r,\ c+2r,\ \ldots,\ c+(\ell-1)r \Big\}.
  \]
\end{theorem}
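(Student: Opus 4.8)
The plan is to transcribe the proof of \Cref{even_theorem} for the even case, with the rank-$2$ input \Cref{even_cor} replaced by \Cref{odd_cor} and the seed case \Cref{thm:main1}\eqref{eq:main11} replaced by \Cref{thm:main1}\eqref{eq:main12}. I would induct on $\ell$; the base $\ell=2$ is exactly \Cref{odd_cor}. For $\ell>2$ I would run a secondary induction on $\sum_i m_i$, a non-negative integer that drops by one under the deletion used below; as in the even case, $k$ is a bookkeeping parameter, and whenever $m_\ell$ is maximal among the $m_i$ with $\tilde m_\ell=0$ (which forces all $\tilde m_i=0$) I would first replace $k$ by $k-1$ — one checks this leaves the exponent \emph{set} $\{c+\sum_i\tilde m_i,\ c+r,\ \ldots,\ c+(\ell-1)r\}$ unchanged, so that afterwards $\tilde m_\ell\ge 1$. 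One verifies that, from any admissible datum, these deletions terminate at $m_1=\cdots=m_\ell=0$, which (as $r\ge 2$) forces $k=0$ and $\tilde m_i=r-1$; there $Q(\CA,\mu)=\prod_{i<j}(x_i^r-x_j^r)^{2m+1}$ is the defining polynomial of $(\CA(G(r,r,\ell)),\, m\omega+1)$, so, $G(r,r,\ell)$ being well-generated, \Cref{thm:main1}\eqref{eq:main12} applies, and comparison with the coexponents of $G(r,r,\ell)$ as in the proof of \Cref{thm:main2}\eqref{eq:main22} shows the exponents agree with the asserted formula (with $c=((\ell-1)m-1)r+1$).

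For the inductive step I would fix $m_\ell>0$ maximal (so, after the possible $k$-replacement, $\tilde m_\ell\ge 1$) and apply the multi-arrangement Addition--Deletion Theorem \Cref{thm:add-del-multi} to the triple with respect to $H_0=\ker x_\ell$. The deletion $(\CA',\mu')$ lowers $m_\ell$ and hence $\sum_i m_i$ by one, so by the secondary induction hypothesis it is free, with exponent multiset obtained from that of $(\CA,\mu)$ by lowering one member by one — matching item (ii) of \Cref{thm:add-del-multi}. For the restriction $(\CA'',\mu^*)$, $\CA''=\CA^{H_0}$ of rank $\ell-1$: the hyperplanes $\ker(x_i-\zeta x_j)$ with $i,j<\ell$ restrict with Euler multiplicity $2m+1$ by \Cref{prop:Euler}, while each coordinate hyperplane $\ker x_i$ ($i<\ell$) has rank-$2$ localization with defining polynomial $x_i^{m_i}x_\ell^{m_\ell}(x_i^r-x_\ell^r)^{2m+1}$; using \Cref{odd_cor} and \Cref{prep:exp_gr1l_odd} one identifies the generator not lying in $\alpha_0\DerS$ (with $\alpha_0=x_\ell$) as the one of polynomial degree $(k+m)r+1$, whence $\mu^*(\ker x_i)=(k+m)r+1$, independently of $i$. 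Thus $Q(\CA'',\mu^*)=\bigl(x_1\cdots x_{\ell-1}\bigr)^{(k+m)r+1}\prod_{i<j<\ell}(x_i^r-x_j^r)^{2m+1}$ is the ``$\tilde m_i^*=0$'' instance of the theorem in rank $\ell-1$ with parameter $k^*=k+m+1$, whose exponents $\{c^*,\,c^*+r,\,\ldots,\,c^*+(\ell-2)r\}$ with $c^*=((\ell-1)m+k)r+1=c+r$ are exactly the truncation $\{c+r,\ldots,c+(\ell-1)r\}$ of the claimed exponent set of $(\CA,\mu)$. Feeding the two statements into \Cref{thm:add-del-multi} gives that $(\CA,\mu)$ is free with the asserted exponents; finally \Cref{thm:main2strong}\eqref{eq:main2strong2} follows by taking $k=q+1$, $\tilde m_i=m_i-qr-1\in\{0,\ldots,r-1\}$, and rewriting.

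The step I expect to be the main obstacle is this Euler-multiplicity computation for the coordinate hyperplanes of the restriction: one must pin down, via \Cref{odd_cor}, which of the two exponents of the rank-$2$ free multi-arrangement $x^{m_i}y^{m_\ell}(x^r-y^r)^{2m+1}$ is the degree of the generator not divisible by $x_\ell$, and show it equals $(k+m)r+1$ \emph{uniformly} in $i$ — it is precisely this uniformity that places the restriction into the already-settled ``$\tilde m_i^*=0$'' shape and makes the Addition--Deletion bookkeeping close, while any slip in the divisibility analysis or in the $k$-re-parametrization (most delicate when all the $m_i$ coincide, or in the corner $k=0$) would break the recursion. The remaining verifications — termination of the deletions, agreement of the seed exponents, and the rewriting needed for \Cref{thm:main2strong}\eqref{eq:main2strong2} — are routine.
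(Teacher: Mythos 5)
Your proposal is correct and follows essentially the same route as the paper: a double induction (on $\ell$ and on $\sum m_i$) with Addition--Deletion applied to $H_\ell=\ker x_\ell$, the same $k\mapsto k-1$ reparametrization when all $\tilde m_i=0$, the seed case from \Cref{thm:main1}, and the same Euler-multiplicity computation reducing the restriction to the ``$\tilde m_i^*=0$'' instance in rank $\ell-1$ with $c^*=c+r$. The only (immaterial) local difference is at the step you flag: the paper pins down $\mu^*(\ker x_i)=(k+m)r+1$ as the exponent of the rank-$2$ localization that is unchanged when $m_\ell$ decreases (via \Cref{odd_cor}), rather than by directly identifying which generator lies outside $x_\ell\DerS$.
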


\begin{proof}
  We argue by induction on $\ell$.
  By \Cref{odd_cor}, the theorem holds for $\ell = 2$.

  Suppose $\ell > 2$.
  The proof in this case follows from a further induction on $\sum m_i$.
  Thanks to \Cref{thm:main1}, the theorem holds for $m_1 = \cdots = m_\ell = 0$.
  Now let $\sum m_i \neq 0$.
  Without loss, we may assume that $m_\ell > 0$ is maximal among the~$m_i$.
  We aim to apply \Cref{thm:add-del-multi} with respect to the hyperplane $H_\ell = \ker{x_\ell}$.
  If $m_1 = \cdots = m_\ell = (k-1)r+1$, then, in order to being able to apply the induction hypothesis, requiring the lower bounds on the~$m_i$, we replace~$k$ by~$k-1$, and, simultaneously, replace $\tilde m_i = 0$ by $\tilde m_i = r$ for all~$i$.
  Observe that this replacement is valid as it does not change the arrangement and, crucially, the exponents also coincide in both cases.

  \medskip

  The defining polynomial of the deletion with respect to $H_\ell$ is given by
  \[
    Q(\CA',\mu') = x_1^{m_1} \cdots x_{\ell-1}^{m_{\ell-1}}x_\ell^{m_\ell-1} \prod_{1 \le i<j \le \ell} (x_i^r - x_j^r)^{2m+1}.
  \]

  Now, by induction on $\sum m_i$, the deletion $(\CA',\mu')$ is free with exponents
  \[
  \exp(\CA',\mu') = \Big\{ c-1+\sum_{i=1}^\ell \tilde m_i, \ c+r, \ c+2r, \ \ldots, \ c+(\ell-1)r \Big\}.
  \]

  The Euler multiplicity $\mu^*$ on $\CA^{H_\ell}$ is given by
  \[
    Q(\CA^{H_\ell},\mu^*) = x_1^{(k+m)r+1} \cdots x_{\ell-1}^{(k+m)r+1} \prod_{1 \le i<j \le \ell-1} (x_i^r - x_j^r)^{2m+1}.
  \]
  This can be seen as follows.

  For a hyperplane $H_{ij} = \ker(x_i-x_j)$ ($i,j \not=\ell$) the localization is of size $\vert \CA_{H_\ell \cap H_{ij}}\vert = 2$, hence the Euler multiplicity is~$2m+1$ by \Cref{prop:Euler}.
  For a hyperplane $H_i = \ker x_i$ ($i \neq \ell$) the localization is given by
  \[
    x_i^{m_i}x_\ell^{m_\ell} (x_i^r-x_\ell^r)^{2m+1}
  \]
  with exponents $(\tilde c+\tilde m_i + \tilde m_\ell, (k+m)r+1)$, by \Cref{odd_cor}, and by decreasing $m_\ell$, the first exponent changes, again according to \Cref{odd_cor}.
  Hence the Euler multiplicity is $(k+m)r+1$.

  Now, by induction on~$\ell$, we know that $(\CA^{H_\ell},\mu^*)$ is free, and we compute the exponents as follows.
  The corresponding constant $c^*$ from the statement of the theorem is $c^* = ((l-2)m + k + m)r +1 = ((l-1)m+k-1)r + r +1 = c+r$.
  Hence,
  \[
    \exp(\CA^{H_\ell},\mu^*) = \{ c^*, \ c^*+r, \ \ldots, \ c^*+(\ell-2)r\} = \{ c+r, \ c+2r, \ \ldots, \ c+(\ell-1)r \}.
  \]
  The theorem now follows by \Cref{thm:add-del-multi}.
\end{proof}

Note that \Cref{thm:main2strong}\eqref{eq:main2strong2} follows from \Cref{odd_theorem}.

\bigskip

\section*{Acknowledgments}

We thank Masahiko Yoshinaga for explaining the details of the universality in \Cref{prop:univerality} in the case of Coxeter arrangements and Christian Krattenthaler for pointing out how to apply~\cite[Lem.~7]{krattenthaler:determinantcalculus} to our situation in \Cref{detcorollary}.
C.S. also thanks Anne Shepler for detailed discussions on the topic of this paper.

\medskip

We acknowledge support from the DFG priority program SPP1489 ``Algorithmic and Experimental Methods in Algebra, Geometry, and Number Theory''.
Part of the research for this paper was carried out while three of us
(T.H., G.R.~and C.S.) were staying at the Mathematical Research Institute
Oberwolfach supported by the ``Research in Pairs'' program.
T.M.~was supported in part by JSPS KAKENHI Grant Numbers 25800082, 17K05335.
C.S.~was was supported by the DFG grants STU 563/2 ``Coxeter-Catalan combinatorics'' and STU 563/4-1 ``Noncrossing phenomena in Algebra and Geometry''.


\bigskip

\bibliographystyle{amsalpha}

\newcommand{\etalchar}[1]{$^{#1}$}
\providecommand{\bysame}{\leavevmode\hbox to3em{\hrulefill}\thinspace}
\providecommand{\MR}{\relax\ifhmode\unskip\space\fi MR }
\providecommand{\MRhref}[2]{%
  \href{http://www.ams.org/mathscinet-getitem?mr=#1}{#2} }
\providecommand{\href}[2]{#2}


\end{document}